\newcommand{\qPunkt}{\	\ldotp}
\newcommand{\qKomma}{\	,}
\newcommand{\aeqts}{\begin{equation*}}
\newcommand{\zeqts}{\end{equation*}}
\newcommand{\aeqt}{\begin{equation}}
\newcommand{\zeqt}{\end{equation}}
\newcommand{\aenum}{\begin{enumerate}}
\newcommand{\zenum}{\end{enumerate}}
\newcommand{\eklam}[1]{\left[ \, #1 \, \right]}
\newcommand{\gklam}[1]{\left\{ \, #1 \, \right\}}
\newcommand{\rklam}[1]{\left( \, #1 \, \right)}
\newcommand{\drklam}[1]{\left( #1 \right)}
\newcommand{\Adj}[1]{#1^\ast}
\newcommand{\fref}[1]{(\ref{#1})}
\newcommand{\iEl}[2]{ #1^{\drklam{#2}} }
\newcommand{\fEl}[2]{#1_{#2}}
\newcommand{\mkEl}[3]{#1_{ #2, \,	#3 }}
\newcommand{\FKlam}[5]{ \rklam{ \fEl{#1}{#2} }_{#3 = #4}^{#5}}
\newcommand{\fKlam}[4]{ \FKlam{#1}{#2}{#2}{#3}{#4}}
\def\rddots{\mathinner{%
  \mkern1mu\raise1pt\hbox{.}%
  \mkern2mu\raise4pt\hbox{.}%
  \mkern2mu\raise7pt\vbox{\kern7pt\hbox{.}}\mkern1mu}}
\newcommand{\intervalO}[2]{ \rklam{ #1, \, #2 } }
\newcommand{\orth}[1]{ {#1}^{\perp} }
\newcommand{\wt}[1]{ \widetilde{#1} }
\newcommand{\abs}[1]{ \left|	#1	\right| }
\newcommand{\norm}[1]{ \left\lVert  #1  \right\rVert }
\newcommand{\inorm}[2]{ {\left\lVert  #1  \right\rVert}_{#2} }
\renewcommand{\p@enumii}{}
\renewcommand{\p@enumiii}{}
\newcommand{\rklamPaar}[2]{ \rklam{ #1 \, , \	#2 } }
\newcommand{\rklamFunk}[2]{#1 \rklam{#2}}
\newcommand{\Inv}[1]{ { #1 }^{-1} }
 \newtheorem{thm}{Theorem}[section]
 \newtheorem{cor}[thm]{Corollary}
 \newtheorem{lem}[thm]{Lemma}
 \newtheorem{prop}[thm]{Proposition}
 \theoremstyle{definition}
 \newtheorem{defn}[thm]{Definition}
 \theoremstyle{remark}
 \newtheorem{rem}[thm]{Remark}
 \numberwithin{equation}{section}
\newcommand{\cA}{{\mathcal A}}
\newcommand{\cM}{{\mathcal M}}
\newcommand{\cC}{{\mathcal C}}
\newcommand{\cR}{{\mathcal R}}
\newcommand{\cS}{{\mathcal S}}
\newcommand{\cL}{{\mathcal L}}
\newcommand{\cP}{{\mathcal P}}
\newcommand{\cQ}{{\mathcal Q}}
\newcommand{\cU}{{\mathcal U}}
\newcommand{\gB}{{\mathfrak B}}
\newcommand{\gH}{{\mathfrak H}}
\newcommand{\gF}{{\mathfrak F}}
\newcommand{\gG}{{\mathfrak G}}
\newcommand{\gD}{{\mathfrak D}}
\newcommand{\gL}{{\mathfrak L}}
\newcommand{\gM}{{\mathfrak M}}
\newcommand{\fL}{{\mathfrak L}}
\newcommand{\C}{{\mathbb C}}
\newcommand{\D}{{\mathbb D}}
\newcommand{\N}{{\mathbb N}}
\newcommand{\R}{{\mathbb R}}
\newcommand{\T}{{\mathbb T}}
\newcommand{\Z}{{\mathbb Z}}
\newcommand{\dEins}{{\mathbf{1}}}
\newcommand{\dT}{{\mathbb T}}
\newcommand{\dC}{{\mathbb C}}
\newcommand{\Pol}{{\mathcal Pol}}
\newcommand{\Lin}{{\mathcal Lin}}
\newcommand{\sgn}{{\rm sgn\,}}
\newcommand{\Ran}{{\rm Ran\,}}
\newcommand{\ul}{\underline}
\renewcommand{\Re}{{\rm Re\,}}
\newcommand{\rank}{{\rm rank\,}}
\newcommand{\Rstr}{{\rm Rstr.\,}}
\begin{document}
%
%
%
%
%
%
%
%
%
\title[]{Description of Helson-Szeg\H o measures in terms of the Schur parameter sequences of associated Schur functions}

\author[V.K. Dubovoy]{Vladimir K. Dubovoy}
\address{%
Department of Mathematics and Mechanics\\
Kharkov State University\\
Svobody Square 4 \\
UA-61077 Kharkov, 
Ukraine}
\email{DUBOVOY@online.kharkiv.com}

\author[Fritzsche]{Bernd Fritzsche}
\address{Mathematisches Institut\\
Universit\"at Leipzig\\
Augustusplatz~10/11\\
04109~Leipzig\\
Germany}
\email{fritzsche@math.uni-leipzig.de}

\author[Kirstein]{Bernd Kirstein}
\address{Mathematisches Institut\\
Universit\"at Leipzig\\
Augustusplatz~10/11\\
04109~Leipzig\\
Germany}
\email{kirstein@math.uni-leipzig.de}

\subjclass{Primary 30E05, 47A57}

\keywords{Helson-Szeg\H o measures, Riesz projection, Schur functions, Schur parameters, unitary colligations}

\date{\today}

\dedicatory{Dedicated to the memory of  Israel Gohberg}

\begin{abstract}
Let $\mu$ be a probability measure on the Borelian $\sigma$-algebra of the unit circle. Then we associate a Schur function
$\theta$ in the unit disk with $\mu$ and give characterizations of the case that $\mu$ is a Helson-Szeg\H o measure in terms of the sequence of Schur parameters of $\theta$.
Furthermore, we state some connections of these characterizations with the backward shift.
\end{abstract}

\maketitle

\vspace{-0.5cm}

\section{Interrelated quadruples consisting of a probability measure, a normalized Carath\'eodory function, a Schur function and a sequence of contractive complex numbers}
\label{s0}


Let $\D:=\{\zeta\in\C:|\zeta|<1\}$ and $\T:=\{t\in\C:|t|=1\}$ be the unit disk and the unit circle in the complex plane $\C$,
respectively. The central object in this paper is the class $\cM_+(\T)$ of all finite nonnegative measures on
the Borelian $\sigma$-algebra $\gB$ on $\T$. A measure $\mu\in\cM_+(\T)$ is called probabiltity measure if $\mu(\T)=1$. 
We denote by $\cM_+^1(\T)$ the subset of all probability measures which belong to $\cM_+(\T)$. 

Now we are going to introduce the subset of Helson-Szeg\H o measures on $\T$. For this reason, we denote by $\Pol$
the set of all trigonometric polynomials, i.e. the set of all functions $f:\T\rightarrow\C$ for which 
there exist a finite subset $I$ of the set $\Z$ of all integers and a sequence $(a_k)_{k\in I}$ from $\C$ such that 
\begin{align}\label{s0-1}
f(t)&=\sum_{k\in I}a_kt^k, \qquad t\in\T.
\end{align}
If $f\in\Pol$ is given via (\ref{s0-1}), then the conjugation $\tilde f$ of $f$ is defined via
\begin{align}\label{s0-2}
\tilde f(t):=-i\sum_{k\in I}(\sgn k)a_kt^k, \qquad t\in\T,
\end{align}
where $\sgn 0:=0$ and where $\sgn k:=\frac k{|k|}$ for each $k\in\Z\setminus\{0\}$.

\begin{defn}\label{s0-d1}
A non-zero measure $\mu$ which belongs to $\cM_+(\T)$ is called a \emph{Helson-Szeg\H o measure} if there exists a positive real constant $C$
such that for all $f\in\Pol$ the inequality
\begin{align}\label{s0-d1-1}
\int_\T|\tilde f(t)|^2\mu(dt)\le C\int_\T|f(t)|^2\mu(dt)
\end{align}
is satisfied.
\end{defn}

If $\mu\in\cM_+(\T)$, then $\mu$ is a Helson-Szeg\H o measure if and only if 
$\alpha\mu$ is a Helson-Szeg\H o measure for each $\alpha\in(0,+\infty)$. Thus, the investigation of 
Helson-Szeg\H o measures can be restricted to the class $\cM_+^1(\T)$.

The main goal of this paper is to describe all Helson-Szeg\H o measures $\mu$ belonging to $\cM_+^1(\T)$
in terms of the Schur parameter sequence of some Schur function $\theta$ which will be associated with $\mu$.

Let $\cC(\D)$ be the Carath\'eodory class of all functions $\Phi:\D\rightarrow\C$ which are holomorphic in $\D$ 
and which satisfy $\Re\Phi(\zeta)\ge0$ for each $\zeta\in\D$. Furthermore, let 
$$ \cC^0(\D):=\{\Phi\in\cC(\D):\Phi(0)=1\}.$$

The class $\cC(\D)$ is intimately related with the class $\cM_+(\T)$.
According to the Riesz-Herglotz theorem (see, e.g., \cite[Chapter 1]{3}), 
for each function $\Phi\in\cC(\D)$ there exist a unique measure
$\mu\in\cM_+(\T)$ and a unique number $\beta\in\R$ such that
\begin{align}\label{s0-4}
 \Phi(\zeta) &= \int_\T\frac{t+\zeta}{t-\zeta}\,\mu(dt)+i\beta,  \qquad \zeta\in\D.
\end{align}
Obviously, $\beta = {\rm Im}\, [\Phi (0)]$. On the other hand, it can be easily checked that, for arbitrary
$\mu\in\cM_+(\T)$ and $\beta \in\R$, the function $\Phi$ which is defined by the
right-hand side of (\ref{s0-4}) belongs to $\cC(\D)$. 
If we consider the Riesz-Herglotz representation (\ref{s0-4}) for a function $\Phi\in\cC^0(\D)$,
then $\beta =0$ and $\mu$ belongs to the set $\cM_+^1 (\mathbb T)$.
Actually, in this way we obtain a bijective correspondence between the classes $\cC^0(\D)$ and $\cM_+^1(\T)$.


Let us now consider the Schur class $\cS(\D)$ of all functions $\Theta :\D\to\C$ 
which are holomorphic in $\D$ and which satisfy $\Theta(\D)\subseteq \D\cup\T$.
If $\Theta\in\cS (\mathbb D)$, then the function $\Phi:\mathbb D\to\mathbb C$ defined by
\begin{equation}\label{Nr.0.1}
\Phi (\zeta) := \frac{1+\zeta\Theta (\zeta)}{1-\zeta\Theta (\zeta)}
\end{equation}
belongs to the class $\cC^0(\D)$. Note that from \eqref{Nr.0.1} it follows
\begin{equation}\label{Nr.0.1f}
 \zeta\Theta (\zeta) = \frac{\Phi (\zeta) - 1}{\Phi (\zeta) + 1}\,, \quad \zeta\in\mathbb D.
\end{equation}
Consequently, it can be easily verified that via \eqref{Nr.0.1} a bijective correspondence between the classes $\cS (\mathbb D)$
and $\cC^0 (\mathbb D)$ is established. 


Let $\theta\in\cS$. Following I. Schur \cite{4}, we set $\theta_0:=\theta$ and $\gamma_0:=\theta_0(0)$. Obviously, $|\gamma_0|\leq 1$. If $|\gamma_0|< 1$, then we consider the function $\theta_1:\D\rightarrow\C$ defined by
$$\theta_1(\zeta):=\frac{1}{\zeta}\cdot\frac{\theta_0(\zeta)-\gamma_0}{1-\overline{\gamma_0}\theta_0(\zeta)}\,.$$
In view of the Lemma of H.A. Schwarz, we have $\theta_1\in\cS$. As above we set $\gamma_1:=\theta_1(0)$ and if $|\gamma_1|<1$, we consider the function $\theta_2:\D\rightarrow\C$ defined by
$$\theta_2(\zeta):=\frac{1}{\zeta}\cdot\frac{\theta_1(\zeta)-\gamma_1}{1-\overline{\gamma_1}\theta_1(\zeta)}\,.$$
Further, we continue this procedure inductively. Namely, if in the $j$-th step a function $\theta_j$ occurs for which 
the complex number $\gamma_j:=\theta_j(0)$ fulfills
$|\gamma_j|<1$, we define $\theta_{j+1}:\D\rightarrow\C$ by
$$\theta_{j+1}(\zeta):=\frac{1}{\zeta}\cdot\frac{\theta_j(\zeta)-\gamma_j}{1-\overline{\gamma_j}\theta_j(\zeta)}$$
and continue this procedure in the prescribed way. Let $\N_0$ be the set of all nonnegative integers,
and, for each $\alpha\in\R$ and $\beta\in\R\cup\{+\infty\}$, let
$\N_{\alpha,\beta}:=\{k\in\N_0:\alpha\le k\le\beta\}$. Then two cases are possible:
\begin{enumerate}
 \item[(1)] The procedure can be carried out without end, i.e. $|\gamma_j|<1$ for each $j\in\N_0$.
\item[(2)] There exists an $m\in\N_0$ such that $|\gamma_m|=1$ and, if $m>0$, then $|\gamma_j|<1$ for each $j\in\N_{0,m-1}$.
\end{enumerate}
Thus, for each function $\theta\in\cS$, a sequence $(\gamma_j)_{j=0}^\omega$ is associated with $\theta$. Here we have $\omega=\infty$ (resp. $\omega=m$) in the first (resp. second) case. From I. Schur's paper \cite{4} it is known that the second case occurs if and only if $\theta$ is a finite Blaschke product of degree $\omega$. 

The above procedure is called \emph{Schur algorithm} and the sequence $(\gamma_j)_{j=0}^\omega$ obtained here is called 
\emph{the sequence of Schur parameters associated with the function $\theta$}, whereas for each $j\in\N_{0,\omega}$ the function $\theta_j$ is called \emph{the $j$-th Schur transform of $\theta$}. The symbol $\Gamma$ stands for the set of all sequences of Schur parameters associated with functions belonging to $\cS$.\\
The following two properties established by I. Schur in \cite{4} determine the particular role which Schur parameters play in the study of functions of class $\cS$.
\begin{enumerate}
 \item[(a)] Each sequence $(\gamma_j)_{j=0}^\infty$ of complex numbers with $|\gamma_j|<1$ for each $j\in\N_0$ belongs to $\Gamma$.
 Furthermore, for each $n\in\N_0$, a sequence $(\gamma_j)_{j=0}^n$ of complex numbers with $|\gamma_n|=1$ and  $|\gamma_j|<1$ for each $j\in\N_{0,n-1}$ belongs to $\Gamma$.
\item[(b)] There is a one-to-one correspondence between the sets $\cS$ and $\Gamma$.
\end{enumerate}
Thus, the Schur parameters are independent parameters which completely determine the functions of the class $\cS$.

In the result of the above considerations we obtain special ordered qudruples $[\mu,\Phi,\Theta,\gamma]$ consisting
of a measure $\mu\in\cM_+^1(\T)$, a function $\Phi\in\cC^0(\D)$, a function $\Theta\in\cS(\D)$, 
and Schur parameters $\gamma=(\gamma_j)_{j=0}^\omega\in\Gamma$,
which are interrelated in such way that each of these four objects uniquely
determines the other three ones. For that reason, if one of the four objects is given, we will call the three others
associated with it.


The main goal of this paper is to derive a criterion which gives an answer to the question when a measure 
$\mu\in\cM_+^1(\T)$ is a Helson-Szeg\H o measure (see Section \ref{s6}).
For this reason, we will need the properties of Helson-Szeg\H o measures listed below (see Theorem \ref{s0-t1}).
For more information about Helson-Szeg\H o measures, we refer the reader, e.g., to \cite[Chapter 7]{2}, \cite[Chapter 5]{5}.

Let $f\in\Pol$ be given by (\ref{s0-1}). Then we consider the Riesz projection $P_+f$ which is defined by
$$ (P_+f)(t):=\sum_{k\in I\cap\N_0}a_kt^k, \qquad t\in\T. $$
Let $\Pol_+:=\Lin\{t^k:k\in\N_0\}$ and $\Pol_-:=\Lin\{t^{-k}:k\in\N\}$ where $\N$ is the set of all positive integers.
Then, clearly, $P_+$ is the projection which projects the linear space $\Pol$ onto $\Pol_+$ parallel to $\Pol_-$.

In view of a result due to Fatou (see, e.g., \cite[Theorem 1.18]{3}), we will use the following notation:
If $h\in H^2(\D)$, then the symbol $\ul h$ stands for the radial boundary values of $h$, which exist for $m$-a.e. $t\in\T$ where $m$ is the normalized Lebesgue measure on $\T$.
If $z\in\C$, then the symbol $z^*$ stands for the complex conjugate of $z$.

\begin{thm}\label{s0-t1}
Let $\mu\in\cM_+^1(\T)$. Then the following statements are equivalent:
\begin{enumerate}
\item[(i)] $\mu$ is a Helson-Szeg\H o measure.
\item[(ii)] The Riesz projection $P_+$ is bounded in $L_\mu^2$.
\item[(iii)] The sequence $(t^n)_{n\in\Z}$ is a (symmetric or nonsymmetric) basis of $L_\mu^2$.
\item[(iv)] $\mu$ is absolutely continuous with respect to $m$ and there is an outer function
$h\in H^2(\D)$ such that $\frac{d\mu}{dm}=|\ul h|^2$ and 
$$ {\rm dist\,}\big(\ul h^*\!/\ul h,H^\infty(\T)\big)<1. $$
\end{enumerate}
\end{thm}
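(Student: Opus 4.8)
The plan is to prove the cycle \,(i)$\,\Rightarrow\,$(iv)$\,\Rightarrow\,$(ii)$\,\Rightarrow\,$(i)\, together with the separate equivalence (ii)$\,\Leftrightarrow\,$(iii), treating the passage between the Riesz projection and the factorization condition (iv) as the analytic heart. First I would record the algebraic backbone linking $P_+$ and conjugation. For $f\in\Pol$ given by \fref{s0-1} one has, directly from the definitions, $(2P_+-\mathrm{id})f=a_0+i\tilde f$, where $a_0$ is regarded as a constant function; indeed $\sum_{k\ge0}a_kt^k-\sum_{k<0}a_kt^k=a_0+\sum_{k>0}a_kt^k-\sum_{k<0}a_kt^k=a_0+i\tilde f$. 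Since $|t|=1$ on $\T$, multiplication $M_t$ by $t$ is unitary on $L_\mu^2$, and the commutator identity $P_+(tf)-tP_+f=a_{-1}$ yields, after replacing $f$ by $t^{-1}f$, the formula $a_0(f)=P_+f-tP_+(t^{-1}f)$. Hence the constant-term functional $f\mapsto a_0$ is bounded on $L_\mu^2$ whenever $P_+$ is, and the backbone identity then gives (ii)$\,\Rightarrow\,$(i); the converse (i)$\,\Rightarrow\,$(ii) is obtained by closing the cycle through (iv), so the constant term never has to be controlled from (i) directly.

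For (ii)$\,\Leftrightarrow\,$(iii) I would first note that $(t^n)_{n\in\Z}$ is complete in $L_\mu^2$ for every $\mu\in\cM_+^1(\T)$, since trigonometric polynomials are uniformly dense in $C(\T)$ and hence dense in $L_\mu^2$. The basis property is then equivalent to a uniform bound on the rectangular partial-sum operators $S_{M,N}f:=\sum_{k=-M}^{N}\hat f(k)t^k$. Using that $M_t$ is unitary, the operator $M_t^{\,j}P_+M_t^{\,-j}$ is the orthogonal-type projection onto $\{k\ge j\}$ and has the same norm as $P_+$, so $S_{M,N}$ is a difference of two such projections; thus (ii) forces the $S_{M,N}$ to be uniformly bounded and yields the basis property, while conversely the uniform boundedness of the $S_{0,N}$ together with their strong convergence to $P_+$ gives (ii).

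The core is the equivalence of (ii)/(i) with (iv). For (i)$\,\Rightarrow\,$(iv) I would invoke the necessity direction of the Helson-Szeg\H o theorem: boundedness of conjugation forces $\mu\ll m$ and $\log(d\mu/dm)\in L^1$, so the Szeg\H o factorization $d\mu/dm=|\ul h|^2$ with $h\in H^2(\D)$ outer is available, and the same hypothesis yields ${\rm dist}(\ul h^*/\ul h,H^\infty(\T))<1$. For (iv)$\,\Rightarrow\,$(ii) I would transport $P_+$ through the isometry $L_\mu^2\to L^2(m)$, $f\mapsto(\ul h)f$; boundedness of $P_+$ then becomes boundedness of a weighted projection on $L^2(m)$, which, after subtracting its analytic part, is governed by the Hankel operator with unimodular symbol $u:=\ul h^*/\ul h$. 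By Nehari's theorem $\|H_u\|={\rm dist}(u,H^\infty(\T))$, and the weighted projection is bounded precisely when this norm is $<1$, i.e. under (iv); combined with (ii)$\,\Rightarrow\,$(i) this closes the cycle.

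The main obstacle is this last equivalence. The delicate points are deriving absolute continuity and $\log(d\mu/dm)\in L^1$ from the operator hypothesis rather than assuming them, and the sharp identification of the boundedness of the weighted Riesz projection with the \emph{strict} inequality $\|H_u\|<1$; this is the genuine content of the Helson-Szeg\H o theorem and rests on Nehari's theorem together with $H^2$ duality. The remaining implications are, by comparison, bookkeeping built on the backbone identity, the unitarity of $M_t$, and the standard characterization of bases by uniformly bounded partial-sum projections.
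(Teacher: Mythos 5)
This theorem is not proved in the paper at all: it is the classical Helson--Szeg\H o theorem, quoted with references to \cite[Chapter 7]{2} and \cite[Chapter 5]{5}, so there is no in-paper argument to compare yours against. Your outline is the standard proof from exactly those sources, and the elementary parts of it are correct and complete as sketched: the identity $(2P_+-\mathrm{id})f=a_0+i\tilde f$ together with $a_0(f)=P_+f-tP_+(t^{-1}f)$ does give (ii)$\Rightarrow$(i) cleanly (note $\|\mathbf 1\|_{L_\mu^2}=1$ since $\mu$ is a probability measure), and the equivalence (ii)$\Leftrightarrow$(iii) via completeness of $\Pol$ in $L_\mu^2$ and uniform boundedness of the shifted projections $M_t^{\,j}P_+M_t^{\,-j}$ is the standard Banach--Steinhaus characterization of bases.

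The one point you should be careful about is the core of the cycle. You prove (i)$\Rightarrow$(iv) by ``invoking the necessity direction of the Helson--Szeg\H o theorem''; but (i)$\Rightarrow$(iv) \emph{is} the necessity direction of the Helson--Szeg\H o theorem in the formulation being proved, so as written this step is circular rather than merely deferred. A self-contained argument has to extract $\mu\ll m$ and $\log(d\mu/dm)\in L^1$ from the conjugation inequality (classically via testing \eqref{s0-d1-1} on suitable polynomials, or via the $H^1$--BMO duality route), and only then can the Szeg\H o factorization $d\mu/dm=|\ul h|^2$ and the distance estimate be obtained. By contrast, your (iv)$\Rightarrow$(ii) step --- transporting $P_+$ by the isometry $f\mapsto\ul h f$ onto $L^2(m)$, identifying the cosine of the angle between past and future with the norm of the Hankel operator with symbol $\ul h^*/\ul h$, and applying Nehari --- is the genuine and correct mechanism (modulo the usual bookkeeping of whether the ``future'' starts at the index $0$ or $1$, which shifts the symbol by a factor of $t$). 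So: correct architecture, honest identification of where the difficulty lies, but the hardest implication is named rather than proved, and its justification as stated presupposes the result.
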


\begin{cor}\label{s0-c1}
Let $\mu\in\cM_+^1(\T)$ be a Helson-Szeg\H o measure. 
Then the Schur parameter sequence $\gamma=(\gamma_j)_{j=0}^\omega$ associated 
with $\mu$ is infinite, i.e., $\omega=\infty$ holds, and the series $\sum_{j=0}^\infty|\gamma_j|^2$ converges, i.e. 
$\gamma\in l_2$.
\end{cor}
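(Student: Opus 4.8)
The plan is to deduce both assertions from a single fact: the Helson-Szeg\H o property forces the density of $\mu$ to satisfy the Szeg\H o condition, and Szeg\H o's theorem then pins down the product $\prod_j(1-|\gamma_j|^2)$.

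First I would establish the Szeg\H o condition. Since $\mu$ is a Helson-Szeg\H o measure, the implication (i)$\Rightarrow$(iv) of Theorem \ref{s0-t1} furnishes an outer function $h\in H^2(\D)$ with $\frac{d\mu}{dm}=|\ul h|^2$. An outer function has no zeros in $\D$, so $h(0)\neq0$, and the characteristic mean-value property of outer functions gives
\begin{align*}
\int_\T\log|\ul h|\,dm=\log|h(0)|,
\end{align*}
a finite number. Hence $\int_\T\log\frac{d\mu}{dm}\,dm=2\log|h(0)|>-\infty$, i.e. $\mu$ satisfies the Szeg\H o condition; note also that $\frac{d\mu}{dm}=|\ul h|^2>0$ $m$-a.e.

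Next I would translate this into the language of the parameters $(\gamma_j)_{j=0}^\omega$. The decisive point is the classical coincidence (in modulus) between the Schur parameters of the Schur function $\theta$ attached to $\mu$ through \eqref{Nr.0.1} and the reflection (Verblunsky) coefficients of $\mu$; consequently $\prod_j(1-|\gamma_j|^2)$ is precisely the quantity occurring in Szeg\H o's theorem, which identifies it with the geometric mean $\exp\big(\int_\T\log\frac{d\mu}{dm}\,dm\big)$ of the density. By the first step this geometric mean is strictly positive, so the product is strictly positive. If the parameter sequence were finite, say $\omega=m<\infty$, then $|\gamma_m|=1$ and the factor $1-|\gamma_m|^2$ would annihilate the product; this contradiction forces $\omega=\infty$. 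For the resulting infinite sequence, the elementary bound $-\log(1-x)\ge x$ on $[0,1)$ shows that positivity of $\prod_{j=0}^\infty(1-|\gamma_j|^2)$ is equivalent to $\sum_{j=0}^\infty|\gamma_j|^2<\infty$, which is the claim $\gamma\in l_2$. I expect the only substantial ingredient to be this Szeg\H o-type product formula for the $\gamma_j$; the surrounding steps are routine.

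For completeness I would also record a direct proof of $\omega=\infty$ that sidesteps Szeg\H o's theorem. By the discussion following the Schur algorithm, finiteness of $(\gamma_j)_{j=0}^\omega$ is equivalent to $\theta$ being a finite Blaschke product. In that case $\zeta\theta(\zeta)$ is unimodular on $\T$, so by \eqref{Nr.0.1} the non-tangential boundary values of $\Re\Phi$ vanish $m$-a.e. Since $\Re\Phi$ is the Poisson integral of $\mu$, Fatou's theorem identifies these boundary values with the density of the absolutely continuous part of $\mu$; their vanishing makes $\mu$ singular, contradicting (iv).
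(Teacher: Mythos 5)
Your proof is correct, and it shares the paper's overall strategy---both arguments start from the implication (i)$\Rightarrow$(iv) of Theorem \ref{s0-t1} and then feed the resulting log-integrability into a Szeg\H o-type product formula---but the two routes differ in which product formula carries the load. The paper stays entirely on the Schur-function side: it quotes the identity $\prod_{j=0}^\omega(1-|\gamma_j|^2)=\exp\{\int_\T\ln(1-|\ul\theta(t)|^2)\,m(dt)\}$ and verifies $\ln(1-|\ul\theta|^2)\in L_m^1$ by the explicit boundary computation $1-|\ul\theta|^2=4|\ul h|^2/|\ul\Phi+1|^2$, using that $\Phi+1$ is outer and that $h\in H^2$. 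You instead extract the Szeg\H o condition $\int_\T\log\frac{d\mu}{dm}\,dm>-\infty$ from the mean-value property of the outer function $h$, and then invoke the Szeg\H o--Verblunsky theorem together with Geronimus's identification (in modulus) of the Schur parameters of $\theta$ with the Verblunsky coefficients of $\mu$. The content is the same---integrating the paper's identity \eqref{s0-c1-3} shows the two product formulas agree, since $\int_\T\ln|\ul\Phi+1|\,dm=\ln 2$---but your version imports a heavier classical package (Geronimus's theorem) in exchange for skipping the boundary-value computation, whereas the paper's argument is self-contained relative to the single cited formula \eqref{s0-c1-1}. Your closing direct argument for $\omega=\infty$ (finite Blaschke product $\Rightarrow\Re\ul\Phi=0$ a.e.\ $\Rightarrow\mu$ singular, contradicting (iv)) is a clean independent check that the paper does not include.
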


\begin{proof}
Let $\theta\in\cS(\D)$ be the Schur function associated with $\mu$. Then it is known (see, e.g. \cite[Chapter 3]{6})
that 
\begin{align}\label{s0-c1-1}
\prod_{j=0}^\omega(1-|\gamma_j|^2)&=\exp\Big\{\int_\T\ln(1-|\ul\theta(t)|^2)m(dt)\Big\}.
\end{align}
We denote by $\Phi$ the function from $\cC^0(\D)$ associated with
$\mu$. Using (\ref{s0-4}), assumption (iv) in Theorem \ref{s0-t1}, and Fatou's theorem we obtain
\begin{align}\label{s0-c1-2}
1-|\ul\theta(t)|^2 = \frac{4\Re\ul\Phi(t)}{|\ul\Phi(t)+1|^2} = \frac{4|\ul h(t)|^2}{|\ul\Phi(t)+1|^2} 
\end{align}
for $m$-a.e. $t\in\T$. Thus,
\begin{align}\label{s0-c1-3}
\ln(1-|\ul\theta(t)|^2) = \ln 4+2\ln|\ul h(t)|-2\ln|\ul\Phi(t)+1|. 
\end{align}
In view of $\Re\Phi(\zeta)>0$ for each $\zeta\in\D$, the function $\Phi$ is outer. Hence, $\ln|\ul\Phi+1|\in L_m^1$ (see, e.g. 
\cite[Theorem 4.29 and Theorem 4.10]{3}). Taking into account condition (iv) in Theorem \ref{s0-t1},
we obtain $h\in H^2(\D)$. Thus,
we infer from (\ref{s0-c1-3}) that $\ln(1-|\ul\theta(t)|^2)\in L_m^1$. Now the assertion follows
from (\ref{s0-c1-1}).
\end{proof}

\begin{rem}\label{s0-r1}
Let $\mu\in\cM_+^1(\T)$, and let the Lebesgue decomposition of $\mu$ with respect to $m$ be given by
\begin{align}\label{s0-r1-1}
\mu(dt)&=v(t)m(dt)+\mu_s(dt),
\end{align}
where $\mu_s$ stands for the singular part of $\mu$ with repect to $m$. Then the relation $\Re\ul\Phi=v$ holds
$m$-a.e. on $\T$. The identity (\ref{s0-c1-3}) has now the form
$$ \ln(1-|\ul\theta(t)|^2) = \ln 4+\ln v(t)-2\ln|\ul\Phi(t)+1| $$
for $m$-a.e. $t\in\T$. From this and (\ref{s0-c1-1}) now it follows a well-known result, namely, that $\ln v\in L_m^1$
(i.e., $\mu$ is a Szeg\H o measure) if and only if $\omega=\infty$ and $\gamma\in l_2$.
In particular, a Helson-Szeg\H o measure is also a Szeg\H o measure.
\end{rem}

We note that our interest in describing the class of Helson-Szeg\H o measures in terms of Schur parameters was 
initiated by conversations with L. B. Golinskii and A. Ya. Kheifets.
During these discussions they turned our attention to the importance of this description for the solution of the inverse scattering problem (the heart of Faddeev-Marchenko theory),
that is, to give necessary and sufficient conditions on a certain class 
of CMV matrices such that the restriction of this correspondence
is one to one (see \cite{GKPY} and the related papers \cite{KPY} and \cite{PVY}).
Our approach to the description of Helson-Szeg\H o measures differs from the one in \cite{GKPY} in that we investigate
this question for CMV matrices in another basis
(see \cite[Definition 2.2., Theorem 2.13]{7}), 
namely the one for that CMV matrices have the full GGT representation
(see Simon \cite[p. 261--262, Remarks and Historical Notes]{Sim}).
However, we consider the study of interrelations between Helson-Szeg\H o
measures and Schur parameter sequences to be of particular interest on its own.
The scope of these investigations would extend far beyond inverse scattering.


\section{A unitary collogation associated with a Borelian probability measeure on the unit circle}
\label{s2}

The starting point of this section is the observation that a given Schur function $\Theta\in\cS(\mathbb D)$ can be
represented as characteristic function of some contraction in a Hilbert space. That means that there exists a separable
complex Hilbert space $\gH$ and bounded linear operators $T:\gH\to\gH$,\, $F:\C\to\gH$,\, $G:\gH\to\C$, and $S:\C\to\C$ 
such that the block operator 
\begin{equation}\label{Nr.d1.2b}
 U:= \left(\begin{matrix} T & F \cr G & S \end{matrix}\right) :\gH\oplus\C\to\gH\oplus\C
\end{equation}
is unitary and, moreover, that for each $\zeta\in\mathbb D$ the equality
\begin{equation}\label{Nr.d1.2c}
 \Theta (\zeta) = S + \zeta G (I - \zeta T)^{-1} F,
\end{equation}
is fulfilled. Note that in \eqref{Nr.d1.2b} the complex plane $\C$ is considered as the one-dimensional
complex Hilbert space with the usual inner product
\[ \bigl(z,w\bigr)_{\C} = z^{\ast}w,
\quad z,w\in\C.   \]

The unitarity of the operator $U$ implies that the operator $T$ is contractive (i.e. $\|T\|\leq 1$).
Thus, for all $\zeta\in\D$ the operator $I - \zeta T$ is boundedly invertible. 
The unitarity of the operator $U$ means that the ordered tuple
\begin{equation}\label{Nr.d1.5b}
 \bigtriangleup := (\gH, \C, \C; T, F, G, S)
\end{equation}
is a unitary colligation. In view of \eqref{Nr.d1.2c}, the function $\Theta$ is the characteristic
operator function of the unitary colligation $\bigtriangleup$. 
For a detailed treatment of unitary colligations and their characteristic functions we refer the reader to the landmark paper 
\cite{8}.

The following subspaces of $\gH$ will play an important role in the sequel
\begin{align}\label{Nr.d1.5c} 
 & \gH_{\gF}:=\bigvee\limits_{n=0}^{\infty}T^nF(\C),\quad
 \gH_{\gG}:=\bigvee\limits_{n=0}^{\infty}(T^{*})^{n} G^*(\C) .
\end{align}
By the symbol $\bigvee_{n=0}^{\infty} A_n$ we mean the smallest closed subspace 
generated by the subsets $A_n$ of the corresponding spaces. 
The subspaces $\gH_\gF$ and $\gH_\gG$ are called the subspaces of controllability and
observability, respectively.
We note that the unitary operator $U$ can be chosen such that 
\begin{equation}\label{Nr.d1.5d} 
  \gH = \gH_\gF\vee\gH_\gG 
\end{equation}
holds. In this case the unitary colligation $\bigtriangleup$ is called simple. 
The simplicity of a unitary colligation means that there does not exist a nontrivial 
invariant subspace of $\gH$ on which the operator $T$ induces a unitary operator.
Such kind of contractions $T$ are called completely nonunitary.

\begin{prop}\label{s2-p1}
Let $\mu\in\cM_+^1(\T)$ be a Szeg\H o measure. Let $\Theta$ be the function belonging to $\cS(\D)$ which is associated with $\mu$ and let $\bigtriangleup$ be the simple unitary colligation the characteristic operator function of which coincides with $\Theta$. Then the spaces 
\begin{align}\label{s2-p1-1}
& \gH_\gF^\bot:=\gH\ominus\gH_\gF, \qquad \gH_\gG^\bot:=\gH\ominus\gH_\gG 
\end{align}
are nontrivial.
\end{prop}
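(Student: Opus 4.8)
The plan is to prove that the controllability and observability subspaces are both proper, i.e.\ $\gH_\gF\ne\gH$ and $\gH_\gG\ne\gH$, by showing that the Szeg\H o hypothesis forces the associated Schur function $\Theta$ to be non-inner. By assumption $\mu$ is a Szeg\H o measure, so, writing its Lebesgue decomposition as in \eqref{s0-r1-1}, Remark \ref{s0-r1} gives $\ln v\in L_m^1$; in particular $v(t)>0$ for $m$-a.e.\ $t\in\T$. The same remark records the pointwise identity
\[ \ln(1-|\ul\Theta(t)|^2)=\ln 4+\ln v(t)-2\ln|\ul\Phi(t)+1| \]
for $m$-a.e.\ $t\in\T$, so that $1-|\ul\Theta(t)|^2=4v(t)/|\ul\Phi(t)+1|^2>0$, i.e.\ $|\ul\Theta(t)|<1$, almost everywhere. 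Equivalently, the defect function $\Delta:=(1-|\ul\Theta|^2)^{1/2}$ is positive $m$-a.e., whence $\Theta$ is not an inner function.

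For the simple colligation $\bigtriangleup$ with characteristic function $\Theta$ I would then invoke the standard equivalences for unitary colligations (see \cite{8}): controllability $\gH_\gF=\gH$ is equivalent to $\Theta$ being co-inner and observability $\gH_\gG=\gH$ to $\Theta$ being inner. Because $\Theta$ is scalar, the inner and co-inner properties coincide (both meaning $|\ul\Theta|=1$ $m$-a.e.), so the precise matching of the two conditions is immaterial and each of $\gH_\gF=\gH$, $\gH_\gG=\gH$ is equivalent to $|\ul\Theta|=1$ $m$-a.e. Since I have just shown $|\ul\Theta|<1$ $m$-a.e., both equalities fail; thus $\gH_\gF$ and $\gH_\gG$ are proper subspaces of $\gH$, and therefore $\gH_\gF^\bot=\gH\ominus\gH_\gF$ and $\gH_\gG^\bot=\gH\ominus\gH_\gG$ are nontrivial, as claimed.

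The step that carries the real weight is the implication ``$\Theta$ non-inner $\Rightarrow\gH_\gG\ne\gH$'' together with its controllability analogue; the reverse implications are elementary. For the latter, the block relation $T^\ast T+G^\ast G=I$, read off from $U^\ast U=I$, yields by iteration the energy balance $\|x\|^2=\lim_{N\to\infty}\|T^Nx\|^2+\sum_{n=0}^\infty\|GT^nx\|^2$ for every $x\in\gH$; a vector $x\in\gH_\gG^\bot$ satisfies $GT^nx=0$ for all $n$, whence $\|T^Nx\|=\|x\|$ and, by simplicity, $T$ acts as a unilateral shift on $\gH_\gG^\bot$ (the relation $TT^\ast+FF^\ast=I$ does the same for $T^\ast$ on $\gH_\gF^\bot$). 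The substantive direction I would draw from the functional model of $\bigtriangleup$ in \cite{8}: when $\Delta>0$ on a set of positive measure the model carries a nonzero summand of type $\overline{\Delta L_m^2}$ lying simultaneously in $\gH_\gF^\bot$ and $\gH_\gG^\bot$, which supplies the required nonzero vectors. I would finally stress that the Szeg\H o hypothesis, and not merely $\omega=\infty$, is indispensable: an infinite Blaschke product or a singular inner function has $\omega=\infty$ yet is inner, so its simple colligation is both controllable and observable and the two complements collapse to $\{0\}$. It is precisely the nontrivial absolutely continuous part guaranteed by $\ln v\in L_m^1$ that produces $|\ul\Theta|<1$ and hence the nontriviality of $\gH_\gF^\bot$ and $\gH_\gG^\bot$.
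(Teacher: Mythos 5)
Your reduction of the statement to a property of the boundary values of $\Theta$ is where the argument breaks. The equivalence you invoke --- that $\gH_\gG=\gH$ (resp.\ $\gH_\gF=\gH$) is equivalent to $\Theta$ being inner (resp.\ co-inner) --- is false; only one implication holds. From $T^*T+G^*G=I$ one gets, exactly as you compute, that a nonzero $x\in\gH_\gG^\perp$ satisfies $\|T^nx\|=\|x\|$ for all $n$, so $\Theta$ inner forces $\gH_\gG^\perp=\{0\}$. The converse --- $\Theta$ non-inner implies $\gH_\gG\ne\gH$ --- is what you actually need, and it fails. Concretely, in the model of Section~\ref{s2} one has $\gH_{\mu,\gG}=\big(\bigvee_{k\ge0}t^k\big)\ominus\C_\T$, so $\gH_{\mu,\gG}^\perp\ne\{0\}$ if and only if the analytic polynomials are not dense in $L_\mu^2$, which by the Szeg\H o--Kolmogorov--Krein theorem happens if and only if $\ln v\in L_m^1$. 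Your argument only uses $v>0$ $m$-a.e.\ (equivalently $|\ul\Theta|<1$ $m$-a.e.), which is strictly weaker: for $d\mu=v\,dm$ with $v>0$ a.e.\ but $\ln v\notin L_m^1$ the colligation is both observable and controllable even though $\Theta$ is not inner. For the same reason your appeal to the functional model is unsound: the residual summand $\overline{\Delta L_m^2}$ does not sit inside $\gH_\gF^\perp\cap\gH_\gG^\perp$; if it did, every measure with a nontrivial absolutely continuous part would fail to have dense polynomials in $L^2_\mu$, contradicting Kolmogorov--Krein.

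The paper's proof avoids this issue entirely: it converts the Szeg\H o hypothesis into the statement $\omega=\infty$ and $\gamma\in l_2$ about the Schur parameters (Remark~\ref{s0-r1}) and then cites \cite[Chapter 2]{7}, where the nontriviality of both orthogonal complements is derived from exactly that condition on $\gamma$. If you want a self-contained argument closer in spirit to yours, you must replace ``$|\ul\Theta|<1$ $m$-a.e.'' by the full Szeg\H o condition $\ln(1-|\ul\Theta|^2)\in L_m^1$ and use the Kolmogorov--Krein distance formula to exhibit a nonzero vector in $L_\mu^2\ominus\bigvee_{k\ge0}e_{-k}=\gH_{\mu,\gF}^\perp$, together with the conjugate statement for $\gH_{\mu,\gG}^\perp$; the condition $|\ul\Theta|<1$ a.e.\ alone cannot suffice.
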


\begin{proof}
Let $\gamma=(\gamma_j)_{j=0}^\omega\in\Gamma$ be the Schur parameter sequence of $\Theta$. Then from Corollary \ref{s0-c1}
we infer that $\omega=\infty$ and that $\gamma\in l_2$. In this case it was proved in \cite[Chapter 2]{7} that both spaces 
(\ref{s2-p1-1}) are nontrivial.
\end{proof}

Because of \eqref{Nr.d1.5c} and \eqref{s2-p1-1} it follows that the subspace $\gH_\gG^\perp$ 
$($resp. $\gH_\gF^\perp)$ is invariant with respect to $T$ $($resp. $T^*)$.
It can be shown (see \cite[Theorem 1.6]{7}) that 
$$  V_T:= {\rm Rstr}._{\gH_\gG^\perp}T \quad\mbox{and}\quad  V_{T^*}:= {\rm Rstr.}_{\gH_\gF^\perp}T^* $$
are both unilateral shifts. More precisely, $V_T$ (resp. $V_{T^*}$) is exactly the maximal unilateral shift
contained in $T$ $($resp. $T^*)$. This means that an arbitrary invariant subspace with
respect to $T$ $($resp. $T^*)$ on which $T$ $($resp. $T^*)$ induces a unilateral shift is 
contained in $\gH_\gG^\perp$ $($resp. $\gH_\gF^\perp)$.

Let $\mu\in\cM^1_+(\T)$. Then our subsequent
considerations are concerned with the investigation of the unitary
operator $U_\mu^\times:L_\mu^2\to L_\mu^2$ which is defined for each $f\in L_\mu^2$ by
\begin{align}\label{Nr.2.0}
 (U_\mu^\times f) (t) &:= t^* \cdot f(t),
 \qquad t\in\mathbb T.
\end{align}
Denote by $\tau$ the embedding operator of $\C$ into $L_\mu^2$, i.e., $\tau :\C\to L_\mu^2$
is such that for each $c\in\C$ the image $\tau (c)$ of $c$ is the constant function on $\T$
with value $c$. Denote by $\mathbb C_\T$ the subspace of $L_\mu^2$ which is generated by
the constant functions and denote by ${\bf 1}$ the constant function on $\T$ with value $1$.
Then obviously $\tau (\C) =\C_\T$ and $\tau (1) = {\bf 1}$.

We consider the subspace
$$ \gH_\mu := L_\mu^2 \ominus \mathbb C_\mathbb T. $$
Denote by
$$ U_\mu^\times = \left(\begin{matrix} T^\times & F^\times\cr G^\times & S^\times  \end{matrix}\right) $$
the block representation of the operator $U_\mu^\times$ with respect to the orthogonal decomposition
$L_\mu^2 = \gH_\mu\oplus \mathbb C_\mathbb T$. 
Then (see \cite[Section 2.8]{7}) the following result holds.

\begin{thm}\label{2.1}
Let $\mu\in\cM_+^1 (\mathbb T)$. Define $T_\mu := T^\times$,\; $F_\mu := F^\times \tau$,\;
$G_\mu := \tau^\ast G^\times$,\;and $S_\mu := \tau^\ast S^\times\tau$. Then
\begin{align}\label{Nr.2.2}
 & \bigtriangleup_\mu := ( \gH_\mu,\mathbb C, \mathbb C;  T_\mu, F_\mu, G_\mu, S_\mu)
\end{align}
is a simple unitary colligation the characteristic function $\Theta_{\bigtriangleup_\mu}$ of which
coincides with the Schur function $\Theta$ associated with $\mu$.
\end{thm}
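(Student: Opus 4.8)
The plan is to check three things in turn: that $\bigtriangleup_\mu$ is a unitary colligation, that its characteristic function equals $\Theta$, and that it is simple. Throughout I use that $U_\mu^\times$ is multiplication by $t^\ast$ on $L_\mu^2$ and that, since $\mu(\T)=1$, the embedding $\tau$ is an isometry of $\C$ onto $\C_\T$, hence a unitary operator between these one-dimensional spaces. Unitarity is then immediate: conjugating the block operator $U_\mu^\times$ (written with respect to $L_\mu^2=\gH_\mu\oplus\C_\T$) by the unitary $W:=\mathrm{diag}(I_{\gH_\mu},\tau)\colon\gH_\mu\oplus\C\to\gH_\mu\oplus\C_\T$ produces precisely the operator with blocks $T_\mu=T^\times$, $F_\mu=F^\times\tau$, $G_\mu=\tau^\ast G^\times$, $S_\mu=\tau^\ast S^\times\tau$. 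A unitary conjugate of the unitary $U_\mu^\times$ is again unitary, so $\bigtriangleup_\mu$ is a unitary colligation.

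For the characteristic function I would avoid expanding $\Theta_{\bigtriangleup_\mu}$ coefficientwise and instead route everything through the Carath\'eodory function. The key is the resolvent identity
\[
 \langle\mathbf 1,(I-\zeta U_\mu^\times)^{-1}\mathbf 1\rangle_{L_\mu^2}=\int_\T\frac{1}{1-\zeta t^\ast}\,\mu(dt)=\int_\T\frac{t}{t-\zeta}\,\mu(dt),\qquad\zeta\in\D,
\]
which follows at once because $(I-\zeta U_\mu^\times)^{-1}\mathbf 1$ is the function $t\mapsto(1-\zeta t^\ast)^{-1}$ and $t\,t^\ast=1$ on $\T$. Using the Riesz--Herglotz representation \eqref{s0-4} (with $\beta=0$) this rewrites as $\Phi(\zeta)=2\langle\mathbf 1,(I-\zeta U_\mu^\times)^{-1}\mathbf 1\rangle-1$, and then \eqref{Nr.0.1} gives $\langle\mathbf 1,(I-\zeta U_\mu^\times)^{-1}\mathbf 1\rangle=(1-\zeta\Theta(\zeta))^{-1}$. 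On the other hand, since $\mathbf 1=\tau(1)\in\C_\T$, this same scalar is the lower-right corner of $(I-\zeta U_\mu^\times)^{-1}$, which by the Schur complement formula and $\tau^\ast\tau=I_\C$ equals $\bigl(1-\zeta S_\mu-\zeta^2 G_\mu(I-\zeta T_\mu)^{-1}F_\mu\bigr)^{-1}=(1-\zeta\Theta_{\bigtriangleup_\mu}(\zeta))^{-1}$ by the defining formula \eqref{Nr.d1.2c}. Comparing the two shows $\zeta\Theta_{\bigtriangleup_\mu}(\zeta)=\zeta\Theta(\zeta)$ on $\D$, whence $\Theta_{\bigtriangleup_\mu}=\Theta$; the denominators never vanish because $\|T_\mu\|\le1$ and $|\zeta\Theta(\zeta)|<1$ on $\D$.

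It remains to show simplicity, $\gH_\mu=\gH_\gF\vee\gH_\gG$. From the definitions, $F_\mu(1)=P_{\gH_\mu}(t^\ast)$ and $G_\mu^\ast(1)=P_{\gH_\mu}(t)$, while $T_\mu=P_{\gH_\mu}U_\mu^\times|_{\gH_\mu}$ and $T_\mu^\ast=P_{\gH_\mu}(U_\mu^\times)^\ast|_{\gH_\mu}$, where $P_{\gH_\mu}$ is the orthogonal projection onto $\gH_\mu$. The recursion $T_\mu\,P_{\gH_\mu}t^{-k}=P_{\gH_\mu}t^{-(k+1)}-\langle\mathbf 1,t^{-k}\rangle P_{\gH_\mu}t^{-1}$ is triangular, so a short induction gives $\gH_\gF=\overline{\mathrm{span}}\{P_{\gH_\mu}t^{-k}:k\ge1\}=\mathcal K\ominus\C_\T$ with $\mathcal K:=\bigvee_{k\ge0}t^{-k}$, and symmetrically $\gH_\gG=\mathcal L\ominus\C_\T$ with $\mathcal L:=\bigvee_{k\ge0}t^{k}$. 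Since the trigonometric polynomials are dense in $L_\mu^2$ we have $\mathcal K\vee\mathcal L=L_\mu^2$; as $\C_\T\subseteq\mathcal K\cap\mathcal L$ and $L_\mu^2=\C_\T\oplus\gH_\mu$, this forces $(\mathcal K\ominus\C_\T)\vee(\mathcal L\ominus\C_\T)=\gH_\mu$, i.e.\ $\bigtriangleup_\mu$ is simple.

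I expect the characteristic-function identification to be the main obstacle: unitarity is a one-line conjugation and simplicity reduces to the standard density of trigonometric polynomials, whereas recognizing $\langle\mathbf 1,(I-\zeta U_\mu^\times)^{-1}\mathbf 1\rangle$ simultaneously as the Schur complement that produces $\Theta_{\bigtriangleup_\mu}$ and (via Riesz--Herglotz and \eqref{Nr.0.1}) as $(1-\zeta\Theta)^{-1}$ is the step that genuinely pins the colligation to the prescribed Schur function.
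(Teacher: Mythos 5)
The paper does not actually prove Theorem \ref{2.1}; it simply cites \cite[Section 2.8]{7}, so there is no in-text argument to compare against. Your proof is correct and self-contained. The unitarity step is exactly the expected conjugation by $\mathrm{diag}(I_{\gH_\mu},\tau)$, using that $\mu(\T)=1$ makes $\tau$ unitary onto $\C_\T$. Your identification of the characteristic function is a clean route: computing the $\C_\T$-corner of the resolvent $(I-\zeta U_\mu^\times)^{-1}$ once as $\int_\T \frac{t}{t-\zeta}\,\mu(dt)=\frac{\Phi(\zeta)+1}{2}=(1-\zeta\Theta(\zeta))^{-1}$ via Riesz--Herglotz and \eqref{Nr.0.1}, and once as the inverse Schur complement $(1-\zeta S_\mu-\zeta^2 G_\mu(I-\zeta T_\mu)^{-1}F_\mu)^{-1}=(1-\zeta\Theta_{\bigtriangleup_\mu}(\zeta))^{-1}$; both steps are legitimate since $I-\zeta T_\mu$ and $I-\zeta U_\mu^\times$ are invertible for $\zeta\in\D$ and $|\zeta\Theta(\zeta)|<1$. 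The simplicity argument is also sound: your triangular recursion reproduces exactly the identity \eqref{Nr.3.8} that the paper itself uses later, giving $\gH_{\mu,\gF}=\bigl(\bigvee_{k\ge0}t^{-k}\bigr)\ominus\C_\T$ and its mirror image, and density of trigonometric polynomials in $L_\mu^2$ then forces $\gH_{\mu,\gF}\vee\gH_{\mu,\gG}=\gH_\mu$. The only stylistic remark is that the final equality $\Theta_{\bigtriangleup_\mu}=\Theta$ is first obtained on $\D\setminus\{0\}$ from $\zeta\Theta_{\bigtriangleup_\mu}(\zeta)=\zeta\Theta(\zeta)$ and extends to $\zeta=0$ by continuity (or by noting $S_\mu=\tau^*S^\times\tau=\Theta(0)$ directly); this is worth one explicit sentence but is not a gap.
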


In view of Theorem~\ref{2.1}, the operator $T_\mu$ is a completely nonunitary contraction and if the function $\Phi$ is given by 
(\ref{s0-4}) with $\beta=0$, then from (\ref{Nr.0.1f}) it follows
$$ \zeta\Theta_{\bigtriangleup_\mu} (\zeta) = \frac{\Phi (\zeta) -1}{\Phi (\zeta) + 1},
\quad\zeta\in\mathbb D. $$

\begin{defn}
 Let $\mu\in\cM_+^1 (\T)$. Then the simple unitary colligation given by \eqref{Nr.2.2} is called 
 \emph{the unitary colligation associated with} $\mu$.
\end{defn}

Let $\mu\in\cM_+^1(\T)$ be a Szeg\H o measure and let $\gamma=(\gamma_j)_{j=0}^\omega\in\Gamma$ be the Schur parameter 
sequence associated with $\mu$. Then Remark \ref{s0-r1} shows that $\omega=\infty$ and $\gamma\in l_2$.
Furthermore, we use for all integers $n$ the setting $e_n :\mathbb T\to\mathbb C$ defined by
\begin{align}\label{Nr.EK}
 & e_n (t) := t^n .
\end{align}
Thus, we have $e_{-n} = (U_\mu^\times)^n {\bf 1}$,
where $U_\mu^\times$ is the operator defined by (\ref{Nr.2.0}). 
We consider then the system $\{e_0, e_{-1}, e_{-2}, \ldots\}$. 
By the Gram-Schmidt orthogonalization method in the space $L_\mu^2$ we get a unique sequence
$(\varphi_n)_{n=0}^\infty$ of polynomials, where 
\begin{align}\label{Nr.3.1}
 & \varphi_n (t) = \alpha_{n,n} t^{-n} + \alpha_{n,n-1} t^{-(n-1)} + \cdots + \alpha_{n,0},
 \; t\in\mathbb T,\quad n\in \N_0,
\end{align}
such that the conditions
\begin{align}\label{Nr.3.2}
 &\bigvee_{k=0}^n \varphi_k = \bigvee_{k=0}^n (U_\mu^\times)^k {\bf 1},
 \quad \bigl((U_\mu^\times)^n {\bf 1}, \varphi_n\bigr)_{L_\mu^2} > 0, 
 \quad n\in \N_0,
\end{align}
are satisfied. 
We note that the second condition in (\ref{Nr.3.2}) is equivalent to \linebreak
$\bigl( {\bf 1}, \varphi_0 \bigr)_{L_\mu^2} > 0$ and 
\begin{align}\label{Nr.3.3}
 & \bigl(U_\mu^\times \varphi_{n-1},\varphi_n\bigr)_{L_\mu^2} > 0, 
 \quad n\in \N_0.
\end{align}
In particular, since $\mu (\mathbb T)=1$ holds, from the construction of $\varphi_0$ we see that
\begin{align}\label{Nr.4.11}
  & \varphi_0 = \boldsymbol{1}.
\end{align}
We consider a simple unitary colligation $\bigtriangleup_\mu$ of the type \eqref{Nr.2.2} associated with the measure $\mu$. 
The controllability and observability spaces \eqref{Nr.d1.5c} associated with the unitary colligation $\bigtriangleup_\mu$ 
have the forms
\begin{align}\label{s2-17} 
 & \gH_{\mu,\gF}=\bigvee_{n=0}^\infty T_\mu^nF_\mu(\C) \quad\mbox{and}\quad
 \gH_{\mu,\gG}=\bigvee_{n=0}^\infty(T_\mu^*)^n G_\mu^*(\C) ,
\end{align}
respectively.
Let the sequence of functions $(\varphi'_k)_{k=1}^\infty$ be defined by
\begin{align}\label{Nr.3.9}
 & \varphi'_k := T_\mu^{k-1}  F_\mu (1), \quad k\in\N.
\end{align}
In view of the formulas
\begin{align}\label{Nr.3.8}
 & \bigvee_{k=0}^n (U_\mu^\times)^k {\bf 1} 
 = \left( \bigvee_{k=0}^{n-1} (T_\mu)^k F_\mu (1)\right) \oplus \mathbb C_\mathbb T, 
 \quad n\in\N,
\end{align}
it can be seen that the sequence $(\varphi_k)_{k=1}^\infty$ can
be obtained by applying the Gram-Schmidt orthonormalization procedure to 
$(\varphi'_k)_{k=1}^\infty$ with additional consideration of the normalization 
condition (\ref{Nr.3.3}). Thus, we obtain the following result:

\begin{thm}\label{s2-t2}
 The system $(\varphi_k)_{k=1}^\infty$ of orthonormal polynomials is a basis in the space 
 $\gH_{\mu, \gF}$, and 
 \begin{align}\label{s2-t2-1}
 & \gH_{\mu,\gF}=\left(\bigvee_{k=0}^\infty (t^*)^k\right)\ominus\C_\T.
 \end{align}
 This system can be obtained in the result of the application of the Gram-Schmidt
 orthogonalization procedure to the sequence \eqref{Nr.3.9} taking into account the normalization 
 condition \eqref{Nr.3.3}.
\end{thm}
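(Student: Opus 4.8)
The plan is to deduce all three assertions from the key identity \eqref{Nr.3.8}, which links the subspaces generated by $U_\mu^\times$ acting on the constant function ${\bf 1}$ to those generated by $T_\mu$ acting on $F_\mu(1)$. Throughout I use that $e_{-k}=(t^*)^k=(U_\mu^\times)^k{\bf 1}$, whence $\bigvee_{k=0}^\infty (t^*)^k=\bigvee_{k=0}^\infty (U_\mu^\times)^k{\bf 1}$, and that, since $\C$ is one-dimensional, $\gH_{\mu,\gF}=\bigvee_{k=0}^\infty T_\mu^k F_\mu(1)$ by \eqref{s2-17}. From \eqref{Nr.3.9} one has $\bigvee_{k=1}^n \varphi'_k=\bigvee_{j=0}^{n-1} T_\mu^j F_\mu(1)$ for every $n$, which is exactly the first orthogonal summand on the right-hand side of \eqref{Nr.3.8}.

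First I would prove the subspace identity \eqref{s2-t2-1}. Passing to the closed linear hull over all $n\in\N$ in \eqref{Nr.3.8}, the left-hand sides have closed union $\bigvee_{k=0}^\infty (t^*)^k$, while the right-hand sides have closed union $\gH_{\mu,\gF}\oplus\C_\T$; here the summand $\C_\T$ may be pulled outside the closure, since it is closed and orthogonal to each $\bigvee_{k=0}^{n-1}T_\mu^k F_\mu(1)$, so that the orthogonal sum of the two closed subspaces is again closed. This yields $\bigvee_{k=0}^\infty (t^*)^k=\gH_{\mu,\gF}\oplus\C_\T$, and since $\gH_{\mu,\gF}\subseteq\gH_\mu=\C_\T^\perp$, taking the orthogonal complement of $\C_\T$ gives \eqref{s2-t2-1}.

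Next I would establish the Gram-Schmidt description together with the basis property. Fix $n$ and combine the first relation in \eqref{Nr.3.2}, namely $\bigvee_{k=0}^n\varphi_k=\bigvee_{k=0}^n(U_\mu^\times)^k{\bf 1}$, with \eqref{Nr.3.8}. On the left, \eqref{Nr.4.11} gives $\varphi_0={\bf 1}$, which spans $\C_\T$, while $\varphi_1,\dots,\varphi_n$ are orthogonal to $\varphi_0$ and hence lie in $\gH_\mu=\C_\T^\perp$; thus $\bigvee_{k=0}^n\varphi_k=\C_\T\oplus\bigvee_{k=1}^n\varphi_k$. On the right, \eqref{Nr.3.8} reads $\C_\T\oplus\bigvee_{k=1}^n\varphi'_k$. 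Since both $\bigvee_{k=1}^n\varphi_k$ and $\bigvee_{k=1}^n\varphi'_k$ are contained in $\C_\T^\perp$, cancelling the common summand $\C_\T$ gives
\[
 \bigvee_{k=1}^n\varphi_k=\bigvee_{k=1}^n\varphi'_k, \qquad n\in\N.
\]
The system $(\varphi_k)_{k\ge 1}$ is orthonormal by construction, spans the same increasing chain of subspaces as $(\varphi'_k)_{k\ge 1}$ by the last display, and satisfies the normalization \eqref{Nr.3.3}; these properties determine it uniquely, so it is precisely the Gram-Schmidt orthonormalization of $(\varphi'_k)_{k\ge 1}$ subject to \eqref{Nr.3.3}. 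Finally, taking the closed union over $n$ in the display gives $\bigvee_{k=1}^\infty\varphi_k=\bigvee_{k=1}^\infty\varphi'_k=\gH_{\mu,\gF}$, so the orthonormal system $(\varphi_k)_{k\ge 1}$ is complete in $\gH_{\mu,\gF}$, i.e. a basis. Since $\mu$ is a Szeg\H o measure, the associated Schur parameter sequence is infinite, so the $\varphi_k$ are defined for every $k\ge 1$ and $\gH_{\mu,\gF}$ is infinite-dimensional; thus this is a genuine infinite orthonormal basis.

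The span manipulations are routine; the step that needs care is the normalization bookkeeping. One must check that \eqref{Nr.3.3}, written in terms of $U_\mu^\times$, is an admissible phase-fixing condition for the orthonormalization of the $\varphi'$-chain. For $k\ge 2$, since $\varphi_{k-1}\in\gH_\mu$ and $\varphi_k\in\gH_\mu$, compressing with the orthogonal projection onto $\gH_\mu$ gives $(U_\mu^\times\varphi_{k-1},\varphi_k)_{L_\mu^2}=(T_\mu\varphi_{k-1},\varphi_k)_{L_\mu^2}$; for $k=1$ one has $(U_\mu^\times\varphi_0,\varphi_1)_{L_\mu^2}=(F_\mu(1),\varphi_1)_{L_\mu^2}=(\varphi'_1,\varphi_1)_{L_\mu^2}$. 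In each case the inner product pairs $\varphi_k$ with the new direction adjoined to the $\varphi'$-chain at step $k$, so its positivity fixes the phase of $\varphi_k$ exactly as in a Gram-Schmidt step. A final harmless point is that $\varphi'_1,\dots,\varphi'_n$ are linearly independent, which is forced by the flag identity since its left-hand side has dimension $n$.
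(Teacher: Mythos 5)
Your argument is correct and follows essentially the same route as the paper, which derives Theorem \ref{s2-t2} directly from the flag identity \eqref{Nr.3.8} together with \eqref{Nr.3.2}, \eqref{Nr.4.11} and \eqref{Nr.3.3}; the paper merely states that the claim ``can be seen'' from \eqref{Nr.3.8}, and your write-up supplies the routine details (splitting off $\C_\T$, matching the finite flags $\bigvee_{k=1}^n\varphi_k=\bigvee_{k=1}^n\varphi'_k$, and checking that \eqref{Nr.3.3} is an admissible phase normalization for the $\varphi'$-chain). No gaps.
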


\begin{rem}\label{s2-r2}
 Analogously to \eqref{s2-t2-1} we have the equation
  \begin{align}\label{s2-r2-1}
 & \gH_{\mu,\gG}=\left(\bigvee_{k=0}^\infty t^k\right)\ominus\C_\T.
 \end{align}
\end{rem}

If $T$ is a contraction acting on some Hilbert space $\gH$, then we use the setting
$$ \delta_T:=\dim {\gD_T} \quad 
   \bigl(\mbox{resp.}\; \delta_{T^*}:=\dim {\gD_{T^*}}  \bigr) , $$
where $\gD_T:=\overline{D_T(\gH)}$ (resp. $\gD_{T^*}:=\overline{D_{T^*}(\gH)}_{\,}$) 
is the closure of the range of the defect operator $D_T:=\sqrt{I_\gH-T^*T}$
(resp. $D_{T^*}:=\sqrt{I_\gH-TT^*}_{\,}$).

In view of \eqref{s2-p1-1}, let
$$ \gH_{\mu,\gF}^\perp := \gH_\mu\ominus \gH_{\mu,\gF} , \qquad \gH_{\mu,\gG}^\perp := \gH_\mu\ominus \gH_{\mu,\gG}. $$ 
If $\mu$ is a Szeg\H o measure, then we have
\begin{align}\label{s2-15}
 L_\mu^2\ominus\bigvee_{k=0}^\infty e_{-k} 
 =\big(\gH_\mu\oplus\C_\T\big)\ominus\big(\gH_{\mu,\gF}\oplus\C_\T\big)
 = \gH_\mu\ominus \gH_{\mu,\gF} = \gH_{\mu,\gF}^\perp \ne\{0\}.
\end{align}
So from Proposition \ref{s2-p1} we obtain the known result that in this case the system $(\varphi_n)_{n=0}^\infty$ is not complete in the space $L_\mu^2$.

In our case 
$$  V_{T_\mu}:= {\rm Rstr.}_{\gH_{\mu,\gF}^\perp}T_\mu \qquad
  \big(\mbox{resp.}\  V_{T_\mu^*}:= {\rm Rstr.}_{\gH_{\mu,\gF}^\perp}T_\mu^* \big) $$
is the maximal unilateral shift contained in $T_\mu$ (resp. $T_\mu^*$) (see \cite[Theorem~1.6]{7}). 
In view of $\delta_{T_\mu} = \delta_{T_\mu^*} =1$ 
the multiplicity of the unilateral shift $V_{T_\mu}$ (resp. $V_{T_\mu^*}$) is equal to $1$.

\begin{prop}\label{3.4}
 The orthonormal system of the polynomials $(\varphi_k)_{k=0}^\infty$ is noncomplete in $L_\mu^2$
 if and only if the contraction $T_\mu$ $($resp. $T_\mu^*)$ contains a maximal unilateral 
 shift $V_{T_\mu}$ $($resp. $V_{T_\mu^*})$ of multiplicity $1$.
\end{prop}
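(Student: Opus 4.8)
The plan is to reduce the completeness of the system $(\varphi_k)_{k=0}^\infty$ to the (non)triviality of the controllability defect space $\gH_{\mu,\gF}^\perp$, and then to translate that condition into the language of the maximal unilateral shifts contained in $T_\mu$ and $T_\mu^*$. First I would compute the closed linear span of the orthonormal system. By \eqref{Nr.4.11} we have $\varphi_0 = {\bf 1}$, so $\varphi_0$ spans $\C_\T$, while by Theorem~\ref{s2-t2} the remaining polynomials $(\varphi_k)_{k=1}^\infty$ form a basis of $\gH_{\mu,\gF}$. Since $\gH_{\mu,\gF}\subseteq\gH_\mu = L_\mu^2\ominus\C_\T$, the closed linear hull of $(\varphi_k)_{k=0}^\infty$ equals $\C_\T\oplus\gH_{\mu,\gF}$. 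Comparing with the decomposition $L_\mu^2 = \C_\T\oplus\gH_\mu$, the system is complete precisely when $\gH_{\mu,\gF}=\gH_\mu$, whence
\[
(\varphi_k)_{k=0}^\infty \text{ is noncomplete in } L_\mu^2 \iff \gH_{\mu,\gF}^\perp\ne\{0\}.
\]

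The second step uses the description of the maximal unilateral shifts recalled before the proposition. There $V_{T_\mu^*}={\rm Rstr.}_{\gH_{\mu,\gF}^\perp}T_\mu^*$ is identified (via \cite[Theorem~1.6]{7}) as the maximal unilateral shift contained in $T_\mu^*$, and because $\delta_{T_\mu^*}=1$ its multiplicity equals $1$ whenever the invariant subspace $\gH_{\mu,\gF}^\perp$ is nonzero. Hence $T_\mu^*$ contains a maximal unilateral shift of multiplicity $1$ exactly when $\gH_{\mu,\gF}^\perp\ne\{0\}$, which by the first step is equivalent to the noncompleteness of $(\varphi_k)_{k=0}^\infty$. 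This settles the version phrased for $T_\mu^*$.

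For the version phrased for $T_\mu$ the shift $V_{T_\mu}$ acts on $\gH_{\mu,\gG}^\perp$, so I need the symmetry $\gH_{\mu,\gF}^\perp\ne\{0\}\iff\gH_{\mu,\gG}^\perp\ne\{0\}$. Here I would invoke the antiunitary involution $J$ on $L_\mu^2$ given by $(Jf)(t):=(f(t))^*$. It is a conjugate-linear isometric bijection that fixes $\C_\T$ and, since $t^*=t^{-1}$ on $\T$, sends $t^k$ to $(t^*)^k$; hence by \eqref{s2-t2-1} and \eqref{s2-r2-1} it maps $\gH_{\mu,\gG}$ onto $\gH_{\mu,\gF}$ and $\gH_\mu$ onto itself. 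Being orthogonality preserving, $J$ then maps $\gH_{\mu,\gG}^\perp$ onto $\gH_{\mu,\gF}^\perp$, so the two spaces have equal dimension and vanish simultaneously. Combining this with the identification of $V_{T_\mu}$ as the maximal unilateral shift in $T_\mu$ acting on $\gH_{\mu,\gG}^\perp$ (again of multiplicity $1$ because $\delta_{T_\mu}=1$) gives the equivalence stated for $T_\mu$.

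The main obstacle is precisely this last step: one must be certain that $\gH_{\mu,\gF}^\perp$ and $\gH_{\mu,\gG}^\perp$ are simultaneously trivial. The conjugation argument above is the cleanest route; alternatively one could observe, using the first step together with classical Szeg\H o theory, that noncompleteness of the anti-analytic polynomials is equivalent to $\mu$ being a Szeg\H o measure, and then quote Proposition~\ref{s2-p1}, which shows that for Szeg\H o measures both $\gH_{\mu,\gF}^\perp$ and $\gH_{\mu,\gG}^\perp$ are nontrivial, to obtain the same symmetry.
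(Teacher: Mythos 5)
Your argument is correct and is essentially the one the paper relies on: Proposition~\ref{3.4} carries no separate proof and is just a summary of the preceding discussion, which likewise identifies the closed span of $(\varphi_k)_{k=0}^\infty$ with $\C_\T\oplus\gH_{\mu,\gF}$ via \eqref{Nr.4.11} and Theorem~\ref{s2-t2} (so that noncompleteness is equivalent to $\gH_{\mu,\gF}^\perp\ne\{0\}$, cf.\ \eqref{s2-15}) and then quotes the identification of the restricted operators as the maximal unilateral shifts of multiplicity $1$. The one point where you go beyond the text is the conjugation argument giving $\gH_{\mu,\gF}^\perp\ne\{0\}\iff\gH_{\mu,\gG}^\perp\ne\{0\}$; this step really is needed for the $T_\mu$ half of the statement, since by the general definitions of Section~\ref{s2} (and consistently with \eqref{Nr.3.40}) the shift $V_{T_\mu}$ lives on $\gH_{\mu,\gG}^\perp$, not on $\gH_{\mu,\gF}^\perp$ as the display immediately preceding the proposition would suggest, and the paper passes over this in silence. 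Your antiunitary $f\mapsto f^*$ settles it cleanly, and the fallback via the Szeg\H o condition and Proposition~\ref{s2-p1} would work as well.
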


\section{On the connection between the Riesz projection
          $\fEl{P}{+}$ and the projection $\mkEl{ \cP^{\gF} }{\mu}{\gG}$
          which projects $\fEl{\gH}{\mu}$ onto $\mkEl{\gH}{\mu}{\gG}$
          parallel to $\mkEl{\gH}{\mu}{\gF}$}
\label{s3}


Let $\mu \in \rklamFunk{ \fEl{\cM^{1}}{+} }{ \dT }$.
We consider the unitary colligation $\fEl{\Delta}{\mu}$
of type \eqref{Nr.2.2} which is associated with the measure $\mu$.
Then the following statement is true.

\begin{thm}       \label{3-thm1}
  Let $\mu \in \rklamFunk{ \fEl{\cM^{1}}{+} }{ \dT }$ be
  a Szeg\H{o} measure. Then the Riesz projection $\fEl{P}{+}$
  is bounded in $\fEl{L^2}{\mu}$ if and only if the projection
  $\mkEl{ \cP^{\gF} }{\mu}{\gG}$ which projects
  $\fEl{\gH}{\mu}$ onto $\mkEl{\gH}{\mu}{\gG}$
  parallel to $\mkEl{\gH}{\mu}{\gF}$ is bounded.
\end{thm}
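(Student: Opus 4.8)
\emph{Proof proposal.} The plan is to recast both boundedness assertions as statements about topological direct sum decompositions and then to pass between the two decompositions by explicitly bookkeeping the constant function $\mathbf 1$. Recall the standard fact that a densely defined projection onto a closed subspace $M$ parallel to a closed subspace $L$ is bounded if and only if $M$ and $L$ form a topological direct sum of the ambient space, i.e. $M\cap L=\{0\}$, $M+L$ fills the space, and the angle between $M$ and $L$ is positive. For the Riesz projection the ambient space is $L_\mu^2$, with $M_+:=\bigvee_{k=0}^\infty t^k=\overline{\Pol_+}$ and $M_-:=\bigvee_{k=1}^\infty(t^*)^k=\overline{\Pol_-}$ (here $\Pol=\Pol_+\,\dot{+}\,\Pol_-$ is dense in $L_\mu^2$); for $\cP^{\gF}_{\mu,\gG}$ the ambient space is $\gH_\mu$, with the pair $\gH_{\mu,\gG}$, $\gH_{\mu,\gF}$. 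By Remark \ref{s2-r2} and Theorem \ref{s2-t2} these pairs are linked through the orthogonal decompositions $M_+=\C_\T\oplus\gH_{\mu,\gG}$ and $N:=\bigvee_{k=0}^\infty(t^*)^k=\C_\T\oplus\gH_{\mu,\gF}$. Writing $P_0$ for the orthogonal projection of $L_\mu^2$ onto $\C_\T$ (so $P_0f=(\int_\T f\,d\mu)\mathbf 1$) and $Q:=I-P_0$ for the orthogonal projection onto $\gH_\mu$, I would first record that $Q(M_+)=\gH_{\mu,\gG}$ and $Q(M_-)\subseteq\gH_{\mu,\gF}$, since any $h\in M_-\subseteq N$ remains in $N$ after subtracting its $\mu$-mean and then lies in $N\ominus\C_\T=\gH_{\mu,\gF}$.

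The decisive auxiliary fact is that, because $\mu$ is a Szeg\H o measure, $\mathbf 1\notin M_-$; equivalently, the distance of $\mathbf 1$ to $\overline{\Pol_-}$ is positive (classical Szeg\H o--Kolmogorov prediction, cf.\ Remark \ref{s0-r1}). Since $\C_\T$ is one dimensional, this yields a \emph{bounded} topological splitting $N=\C_\T\,\dot{+}\,M_-$, and I would use the two associated bounded maps $h\mapsto c(h)\mathbf 1$ and $h\mapsto p(h)\in M_-$ for $h\in N$.

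For the forward implication I claim that if $P_+$ is bounded then $\cP^{\gF}_{\mu,\gG}=QP_+|_{\gH_\mu}$, which is then automatically bounded with $\|\cP^{\gF}_{\mu,\gG}\|\le\|P_+\|$. Indeed, for $f\in\gH_\mu$ one has $QP_+f\in Q(M_+)=\gH_{\mu,\gG}$, while $f-QP_+f=Q\big((I-P_+)f\big)\in Q(M_-)\subseteq\gH_{\mu,\gF}$, using $Qf=f$; uniqueness of the $\gH_\mu$-splitting (verified below) identifies this with the oblique projection. Conversely, assuming $\cP^{\gF}_{\mu,\gG}$ bounded, I would build the Riesz splitting by shuffling the constant: for $f\in L_\mu^2$ write $f=P_0f+Qf$, decompose $Qf=g+h$ with $g=\cP^{\gF}_{\mu,\gG}(Qf)\in\gH_{\mu,\gG}\subseteq M_+$ and $h\in\gH_{\mu,\gF}\subseteq N$, and then split $h=c(h)\mathbf 1+p(h)$ with $p(h)\in M_-$. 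Then $u:=P_0f+g+c(h)\mathbf 1\in M_+$ and $w:=p(h)\in M_-$ satisfy $f=u+w$, every step being bounded; hence $L_\mu^2=M_+\,\dot{+}\,M_-$ is a topological direct sum, which forces the algebraic projection $P_+$ on the dense set $\Pol$ to be bounded.

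It remains to supply the directness used in both steps, and here the Szeg\H o hypothesis enters once more. If $v\in\gH_{\mu,\gG}\cap\gH_{\mu,\gF}$, then writing $v\in N=\C_\T\,\dot{+}\,M_-$ as $v=c\mathbf 1+p$ with $p\in M_-$, and using $v,\mathbf 1\in M_+$, gives $p=v-c\mathbf 1\in M_+\cap M_-$; when $P_+$ is bounded this intersection is $\{0\}$, so $p=0$, $v=c\mathbf 1$, and $v=0$ because $v$ is mean-zero. The symmetric computation shows $M_+\cap M_-=\{0\}$ as soon as $\gH_{\mu,\gG}\cap\gH_{\mu,\gF}=\{0\}$, which holds whenever $\cP^{\gF}_{\mu,\gG}$ is a genuine projection. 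I expect the hard part to be exactly this bookkeeping of the constant: the Riesz decomposition lives on $L_\mu^2$, which contains $\C_\T$, whereas $\cP^{\gF}_{\mu,\gG}$ acts on the smaller space $\gH_\mu$ from which the constants have been removed, and the two subspace pairs agree ``up to $\C_\T$'' only by virtue of the bounded Szeg\H o splitting $N=\C_\T\,\dot{+}\,M_-$. Tracking where the constant is absorbed---into $M_+$ on the Riesz side, orthogonally into $\C_\T$ on the colligation side---is the crux; the remaining estimates are routine.
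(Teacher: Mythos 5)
Your proposal is correct and follows essentially the same route as the paper's proof: both arguments identify $\cP^{\gF}_{\mu,\gG}$ with $P_+$ up to a rank-one correction that bookkeeps the constant function $\mathbf 1$, and both use the Szeg\H o hypothesis only in the converse direction, in equivalent forms --- your ${\rm dist}(\mathbf 1,\overline{\Pol_-})>0$ is exactly the boundedness of the constant-coefficient functional on ${\mathcal Pol}_{\le 0}$ that the paper extracts from the Szeg\H o--Kolmogorov--Krein theorem. The difference is presentational: the paper performs the coefficient bookkeeping explicitly on the finite-dimensional spaces $\gH_\mu^{(n)}$ and passes to the limit, whereas you phrase the same mechanism as a comparison of topological direct sums, with your forward-direction identity $\cP^{\gF}_{\mu,\gG}=(I-P_{\C_\T})P_+|_{\gH_\mu}$ being a clean form of the relation the paper derives between $P_+h$, $h_{\gG}$ and $h_{\gF}(0)\cdot\mathbf 1$.
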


\begin{proof}
  For each $n \in \N_0$ we consider particular subspaces of the space $\gH_\mu$, namely
  \begin{align}       \label{3-thm1-1}
    \mkEl{ \iEl{\gH}{n} }{\mu}{\gF}
      :=  \bigvee_{k = 0}^{n}
          { 
            \fEl{T^k}{\mu}
            \rklamFunk{ \fEl{F}{\mu} }{\dC} 
          } \qKomma
    \qquad
    \qquad
    \mkEl{ \iEl{\gH}{n} }{\mu}{\gG}
      :=  \bigvee_{k = 0}^{n}
          { 
            \rklam{ \fEl{ \Adj{T} }{\mu} }^k
            \rklamFunk{ \fEl{ \Adj{G} }{\mu} }{\dC} 
          } \qKomma
  \end{align}
  and
  \begin{align}       \label{3-thm1-2}
    \fEl{ \iEl{\gH}{n} }{\mu}
      :=  \mkEl{ \iEl{\gH}{n} }{\mu}{\gF}
          \vee
          \mkEl{ \iEl{\gH}{n} }{\mu}{\gG} \qKomma
    \qquad
    \qquad
    \mkEl{L}{\mu}{n}
      :=  \fEl{ \iEl{\gH}{n} }{\mu}
          \oplus
          \fEl{\dC}{\dT} \qPunkt
  \end{align}
  Then from \eqref{Nr.3.9}, \eqref{s2-t2-1}, and \eqref{s2-r2-1}  
  we obtain the relations
  \begin{align}       \label{3-thm1-3}
    \mkEl{\gH}{\mu}{\gF}
      =  \bigvee_{n = 0}^{\infty}
          { 
            \mkEl{ \iEl{\gH}{n} }{\mu}{\gF}
          } \qKomma
    \qquad
    \qquad
    \mkEl{\gH}{\mu}{\gG}
      =  \bigvee_{n = 0}^{\infty}
          { 
            \mkEl{ \iEl{\gH}{n} }{\mu}{\gG}
          } \qKomma
  \end{align}
  \begin{align}       \label{3-thm1-4}
    \fEl{\gH}{\mu}
      =  \bigvee_{n = 0}^{\infty}
          { 
            \fEl{ \iEl{\gH}{n} }{\mu}
          } \qKomma
    \qquad
    \qquad
    \fEl{L^2}{\mu}
      =  \bigvee_{n = 0}^{\infty}
          { 
            \mkEl{L}{\mu}{n}
          } \qKomma
  \end{align}
  \begin{align}       \label{3-thm1-5}
    \mkEl{ \iEl{\gH}{n} }{\mu}{\gF}
      =  \rklam{
            \bigvee_{k = 0}^{n}
            { 
              (t^*)^k
            }
          }
          \ominus
          \fEl{\dC}{\dT} \qKomma
    \qquad
    \qquad
    \mkEl{ \iEl{\gH}{n} }{\mu}{\gG}
      =  \rklam{
            \bigvee_{k = 0}^{n}
            { 
              t^k
            }
          }
          \ominus
          \fEl{\dC}{\dT}\,,
  \end{align}
  and
  \begin{align}       \label{3-thm1-6}
    \fEl{ \iEl{\gH}{n} }{\mu}
      =  \rklam{
            \bigvee_{k = -n}^{n}
            { 
              t^{k}
            }
          }
          \ominus
          \fEl{\dC}{\dT} \qKomma
    \qquad
    \qquad
    \mkEl{L}{\mu}{n}
      =   \bigvee_{k = -n}^{n}
            { 
              t^{k}
            } .
  \end{align}
  Since $\mu$ is a Szeg\H{o} measure, for each $n \in \N_0$
  we obtain
  \begin{align}       \label{3-thm1-7}
    \fEl{ \iEl{\gH}{n} }{\mu}
      = \mkEl{ \iEl{\gH}{n} }{\mu}{\gF}
        \quad \dot{+} \quad
        \mkEl{ \iEl{\gH}{n} }{\mu}{\gG}        \qPunkt
  \end{align}
  Suppose now that the Riesz projection $\fEl{P}{+}$ is bounded
  in $\fEl{L^2}{\mu}$. Let $h \in \fEl{ \iEl{\gH}{n} }{\mu}$.
  Then, because of \fref{3-thm1-6}, the function $h$ has the form
  \begin{align}       \label{3-thm1-8}
    \rklamFunk{h}{t}
      = \sum_{k = -n}^{n}{ \fEl{a}{k} t^k }        \,,\qquad t\in\T.
  \end{align}
  From \fref{3-thm1-5} and \fref{3-thm1-7} we obtain
  \begin{align}       \label{3-thm1-9}
    h = \fEl{h}{\gF} + \fEl{h}{\gG}   \qKomma
  \end{align}
  where
  \begin{align*}
    \fEl{h}{\gF}  \in \mkEl{ \iEl{\gH}{n} }{\mu}{\gF} \qKomma
    \qquad
    \qquad
    \fEl{h}{\gG}  \in \mkEl{ \iEl{\gH}{n} }{\mu}{\gG} \qKomma
  \end{align*}
  \begin{align}       \label{3-thm1-10}
    \rklamFunk{ \fEl{h}{\gF} }{t}
      = \mkEl{a}{0}{\gF} 
        + \sum_{k = 1}^{n}
          {
           \fEl{a}{-k} t^{-k}
          }        \qKomma
    \qquad
    \qquad
    \rklamFunk{ \fEl{h}{\gG} }{t}
      = \mkEl{a}{0}{\gG} 
        + \sum_{k = 1}^{n}
          {
           \fEl{a}{k} t^{k}
          }\,,
  \end{align}
  and
  \begin{align*}
    \fEl{a}{0} 
      = \mkEl{a}{0}{\gF} +  \mkEl{a}{0}{\gG}  \qPunkt
  \end{align*}
  Observe that
  \begin{align}       \label{3-thm1-11}
    \mkEl{ \cP^{\gF} }{\mu}{\gG}  h
    = \fEl{h}{\gG}  \qPunkt
  \end{align}
  On the other hand, we have
  \begin{align}       \label{3-thm1-12}
    h = \fEl{h}{+}  +  \fEl{h}{-} \qKomma
  \end{align}
  where, for each $t\in\T$,
  \begin{align}       \label{3-thm1-13}
    \rklamFunk{ \fEl{h}{+} }{t}
      = \rklamFunk{ \rklam{ \fEl{P}{+}(h) } }{t}
      = \sum_{k = 0}^{n}
        {
          \fEl{a}{k} t^{k}
        }        \qKomma
    \qquad
    \qquad
    \rklamFunk{ \fEl{h}{-} }{t}
      = \sum_{k = 1}^{n}
        {
         \fEl{a}{-k} t^{-k}
        } \qPunkt
  \end{align}
  For a polynomial $\fEl{h}{\gF}$ of the type \fref{3-thm1-10}
  we set
  \begin{align}       \label{3-thm1-14}
    \rklamFunk{ \fEl{h}{\gF} }{0}
      := \mkEl{a}{0}{\gF}    \qPunkt
  \end{align}
  Then from \fref{3-thm1-10}--\fref{3-thm1-14} we infer
  \begin{align}       \label{3-thm1-15}
    \fEl{P}{+}  h
    = \fEl{h}{+}
    = \fEl{h}{\gG}
      + \rklamFunk{ \fEl{h}{\gF} }{0} \cdot  \dEins
    = \mkEl{ \cP^{\gF} }{\mu}{\gG} h
      + \rklamFunk{ \fEl{h}{\gF} }{0} \cdot  \dEins  \qPunkt
  \end{align}
  Observe that
  \begin{align}       \label{3-thm1-16}
    \fEl{h}{-}
    = \fEl{h}{\gF}
      - \rklamFunk{ \fEl{h}{\gF} }{0} \cdot  \dEins \qKomma
  \end{align}
  where in view of \fref{3-thm1-5} we see that
  \begin{align*}
    \fEl{h}{\gF}  \perp \dEins \qPunkt
  \end{align*}
  Let $\fEl{P}{ \fEl{\dC}{\dT} }$ be the orthoprojection from
  $\fEl{L^2}{\mu}$ onto $\fEl{\dC}{\dT}$. Then from \fref{3-thm1-16}
  it follows that
  \begin{align*}
    \rklamFunk{ \fEl{h}{\gF} }{0} \cdot \dEins
    = \fEl{P}{ \fEl{\dC}{\dT} }  \fEl{h}{-}
    = \fEl{P}{ \fEl{\dC}{\dT} }
      \rklam{
        I - \fEl{P}{+}
      } h   \qPunkt
  \end{align*}
  Inserting this expression into \fref{3-thm1-15} we get
  \begin{align*}
    \mkEl{ \cP^{\gF} }{\mu}{\gG} h
    = \fEl{P}{+}  h
      - \fEl{P}{ \fEl{\dC}{\dT} }
        \rklam{
          I - \fEl{P}{+}
        } h  \qPunkt
  \end{align*}
  From this and \fref{3-thm1-4} it follows that the boundedness
  of the projection $\fEl{P}{+}$ in $\fEl{L^2}{\mu}$ implies
  the boundedness of the projection $\mkEl{ \cP^{\gF} }{\mu}{\gG}$
  in $\fEl{\gH}{\mu}$. 
  
  Conversely, suppose that the projection
  $\mkEl{ \cP^{\gF} }{\mu}{\gG}$ is bounded in $\fEl{\gH}{\mu}$.
  Let $f \in \mkEl{L}{\mu}{n}$. We denote by
  $\fEl{P}{ \fEl{ \iEl{\gH}{n} }{\mu} }$ the orthogonal projection
  from $\fEl{L^2}{\mu}$ onto $\fEl{ \iEl{\gH}{n} }{\mu}$. We set
  \begin{align*}
    h := \fEl{P}{ \fEl{ \iEl{\gH}{n} }{\mu} } f
  \end{align*}
  and use for $h$ the notations introduced
  in \fref{3-thm1-8}--\fref{3-thm1-10}. Let
  \begin{align*}
    f = \fEl{f}{+} + \fEl{f}{-}   \qKomma
  \end{align*}
  where $\fEl{f}{+} :=  \fEl{P}{+}  f$. Then
  \begin{align*}
    f =  \fEl{P}{ \fEl{\dC}{\dT} }  f + h
      =  \fEl{P}{ \fEl{\dC}{\dT} }  f
          + \fEl{h}{\gG}  + \fEl{h}{\gF} \qPunkt
  \end{align*}
  This implies
  \begin{align*}
    \fEl{P}{+}  f
    =  \fEl{P}{ \fEl{\dC}{\dT} }  f
          + \fEl{h}{\gG}
          + \rklamFunk{ \fEl{h}{\gF} }{0} \cdot \dEins \qPunkt
  \end{align*}
  This means
  \begin{align}       \label{3-thm1-17}
    \fEl{P}{+}  f
    =  \fEl{P}{ \fEl{\dC}{\dT} }  f
          + \mkEl{ \cP^{\gF} }{\mu}{\gG}
            \fEl{P}{ \fEl{ \iEl{\gH}{n} }{\mu} }  f
          + \rklamFunk{ \fEl{h}{\gF} }{0} \cdot \dEins \qPunkt
  \end{align}
  The mapping
  $\fEl{h}{\gF} \mapsto \rklamFunk{ \fEl{h}{\gF} }{0}$
  is a linear functional on the set 
  $${\mathcal Pol}_{\le0}:=\Lin\{t^{-k}:k\in\N_0\}.$$
  Since $\mu$ is a Szeg\H{o} measure, the Szeg\H{o}-Kolmogorov-Krein
  Theorem (see, e.g. \cite[Theorem 4.31]{3}) implies that this functional
  is bounded in $\fEl{L^2}{\mu}$ on the set ${\mathcal Pol}_{\le0}$. Thus,
  there exists a constant $C \in \intervalO{0}{+\infty}$ such that
  \begin{align}       \label{3-thm1-18}
    \inorm{
        \rklamFunk{ \fEl{h}{\gF} }{0} \cdot \dEins
    }{ \fEl{L^2}{\mu} }
    &=    \abs{ \rklamFunk{ \fEl{h}{\gF} }{0} } \nonumber \\[4pt]
    &\leq C \cdot \norm{ \fEl{h}{\gF} } \nonumber \\[4pt]
    &= C
      \cdot 
      \norm{ 
        \rklam{ I - \mkEl{ \cP^{\gF} }{\mu}{\gG}  } 
        h
      }    \nonumber \\[4pt]
    &\leq C
      \cdot 
      \norm{  I - \mkEl{ \cP^{\gF} }{\mu}{\gG}  }
      \cdot
      \norm{h}    \nonumber \\[4pt]
    &\leq C
      \cdot 
      \norm{  I - \mkEl{ \cP^{\gF} }{\mu}{\gG}  }
      \cdot
      \norm{f}    \qPunkt
  \end{align}
  From \fref{3-thm1-17} and \fref{3-thm1-18} we get
  \begin{align*}
    \norm{ \fEl{P}{+} f }
    &\leq 
      \norm{ \fEl{P}{ \fEl{\dC}{\dT} } f }
      + \norm{
          \mkEl{ \cP^{\gF} }{\mu}{\gG}
            \fEl{P}{ \fEl{ \iEl{\gH}{n} }{\mu} }  f
        }
      + C \cdot \norm{ I - \mkEl{ \cP^{\gF} }{\mu}{\gG} }
          \cdot \norm{f} \nonumber \\[4pt]
    &\leq
      \norm{f} 
       +  \norm{ \mkEl{ \cP^{\gF} }{\mu}{\gG} } \cdot \norm{f} 
       + C \cdot \norm{ I - \mkEl{ \cP^{\gF} }{\mu}{\gG} }
          \cdot \norm{f}  \qPunkt
  \end{align*}
  Now considering the limit as $n \rightarrow \infty$
  and taking into account \fref{3-thm1-4}, we see that the
  boundedness of the projection $\mkEl{ \cP^{\gF} }{\mu}{\gG}$
  implies the boundedness of the Riesz projection $\fEl{P}{+}$
  in $\fEl{L^2}{\mu}$.
\end{proof}


\section{On the connection of the Riesz projection
          $\fEl{P}{+}$ with the orthogonal projections from
          $\fEl{\gH}{\mu}$ onto the subspaces
          $\mkEl{ \orth{\gH} }{\mu}{\gF}$
          and $\mkEl{ \orth{\gH} }{\mu}{\gG}$}
\label{s4}


Let $\mu \in \rklamFunk{ \fEl{\cM^{1}}{+} }{ \dT }$ be
a Szeg\H{o} measure. As we did earlier, we consider the simple
unitary colligation $\fEl{\Delta}{\mu}$ of type \fref{Nr.2.2} which is
associated with the measure $\mu$. As was previously mentioned, we
then have
\begin{align*}
  \mkEl{ \orth{\gH} }{\mu}{\gF} \neq  \gklam{0}
  \qquad
  \qquad
  \mbox{and}
  \qquad
  \qquad
  \mkEl{ \orth{\gH} }{\mu}{\gG} \neq  \gklam{0} \qPunkt
\end{align*}
We denote by $\fEl{P}{ \mkEl{ \orth{\gH} }{\mu}{\gF} }$
and $\fEl{P}{ \mkEl{ \orth{\gH} }{\mu}{\gG} }$ the
orthogonal projections from $\fEl{\gH}{\mu}$ onto
$\mkEl{ \orth{\gH} }{\mu}{\gF}$ and $\mkEl{ \orth{\gH} }{\mu}{\gG}$,
respectively.

Let $h \in \fEl{ \iEl{\gH}{n} }{\mu}$. Along with the decomposition
\fref{3-thm1-9} we consider the decomposition
\begin{align}       \label{4-1}
  h = \fEl{ \wt{h} }{\gF} + \fEl{ \orth{ \wt{h} } }{\gF}  \qKomma
\end{align}
where
\begin{align*}
 \wt h_\gF\in\gH_{\mu,\gF}^{(n)} 
  \qquad\qquad\mbox{and}\qquad\qquad
  \fEl{ \orth{ \wt{h} } }{\gF}
    \in \fEl{ \iEl{\gH}{n} }{\mu} 
    \ominus 
    \mkEl{ \iEl{\gH}{n} }{\mu}{\gF} \qPunkt 
\end{align*}
From the shape \fref{3-thm1-5} of the
subspace $\mkEl{ \iEl{\gH}{n} }{\mu}{\gF}$
and the polynomial structure of the orthonormal basis
$\fKlam{\varphi}{n}{0}{\infty}$ of the subspace
$\mkEl{\gH}{\mu}{\gF}$, it follows that
\begin{align*}
  \fEl{ \orth{ \wt{h} } }{\gF}
  = \fEl{P}{ \mkEl{ \orth{\gH} }{\mu}{\gF} }  h \qPunkt
\end{align*}
Since $\fEl{h}{\gF}$ (see \fref{3-thm1-9})
and $\fEl{ \wt{h} }{\gF}$ belong to $\mkEl{ \iEl{\gH}{n} }{\mu}{\gF}$,
we get
\begin{align}       \label{4-2}
  \fEl{ \orth{ \wt{h} } }{\gF}
  = \fEl{P}{ \mkEl{ \orth{\gH} }{\mu}{\gF} }  h
  = \fEl{P}{ \mkEl{ \orth{\gH} }{\mu}{\gF} }  \mkEl{h}{\gG}
  = \fEl{P}{ \mkEl{ \orth{\gH} }{\mu}{\gF} }
    \mkEl{ \cP^{\gF} }{\mu}{\gG} h
  = \mkEl{B}{\mu}{\gF}
    \mkEl{ \cP^{\gF} }{\mu}{\gG} h  \qKomma
\end{align}
where
\begin{align}       \label{4-3}
B_{\mu,\gF}:=\Rstr_{\gH_{\mu,\gG}}P_{\gH_{\mu,\gF}^\perp}:\gH_{\mu,\gG}\to\gH_{\mu,\gF}^\perp\,,
\end{align}
i.e., we consider $\mkEl{B}{\mu}{\gF}$ as an operator acting between
the spaces $\mkEl{\gH}{\mu}{\gG}$ and $\mkEl{ \orth{\gH} }{\mu}{\gF}$.

\begin{thm}       \label{4-thm1}
  Let $\mu \in \rklamFunk{ \fEl{\cM^{1}}{+} }{ \dT }$ be
  a Szeg\H{o} measure. Then the projection $\mkEl{ \cP^{\gF} }{\mu}{\gG}$
  is bounded in $\fEl{\gH}{\mu}$ if and only if the operator
  $\mkEl{B}{\mu}{\gF}$ defined in \fref{4-3} is boundedly invertible.
\end{thm}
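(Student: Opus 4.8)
The plan is to work on the finite-dimensional subspaces $\fEl{ \iEl{\gH}{n} }{\mu}$ and pass to the limit $n\to\infty$ at the end, exactly as in the proof of Theorem~\ref{3-thm1}. The starting point is relation \fref{4-2}, which for $h\in\fEl{ \iEl{\gH}{n} }{\mu}$ reads
\aeqts
  \fEl{ \orth{ \wt{h} } }{\gF}
    = \mkEl{B}{\mu}{\gF}\,\mkEl{ \cP^{\gF} }{\mu}{\gG}\,h .
\zeqts
Here $\fEl{ \orth{ \wt{h} } }{\gF}=\fEl{P}{ \mkEl{ \orth{\gH} }{\mu}{\gF} }h$ is an orthogonal projection, hence always bounded with norm at most $\norm{h}$. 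The content of the theorem is therefore the interplay between the bounded (oblique) projection $\mkEl{ \cP^{\gF} }{\mu}{\gG}$ and the operator $\mkEl{B}{\mu}{\gF}$ linking the two images.

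\textbf{The two implications.}
First I would prove that boundedness of $\mkEl{ \cP^{\gF} }{\mu}{\gG}$ forces $\mkEl{B}{\mu}{\gF}$ to be boundedly invertible. Injectivity of $\mkEl{B}{\mu}{\gF}$ is really a nondegeneracy statement: since $\mu$ is a Szeg\H{o} measure we have $\fEl{ \iEl{\gH}{n} }{\mu}=\mkEl{ \iEl{\gH}{n} }{\mu}{\gF}\,\dot{+}\,\mkEl{ \iEl{\gH}{n} }{\mu}{\gG}$ by \fref{3-thm1-7}, and the angle between $\mkEl{\gH}{\mu}{\gF}$ and $\mkEl{\gH}{\mu}{\gG}$ is positive precisely when the oblique projection is bounded; a vector in $\mkEl{\gH}{\mu}{\gG}$ killed by $\fEl{P}{ \mkEl{ \orth{\gH} }{\mu}{\gF} }$ would lie in $\mkEl{\gH}{\mu}{\gF}$, contradicting the direct-sum decomposition unless it is zero. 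For the bounded-below estimate, I would run the chain of inequalities in \fref{3-thm1-18} in reverse: for $g=\mkEl{ \cP^{\gF} }{\mu}{\gG}h\in\mkEl{\gH}{\mu}{\gG}$ one writes $\norm{\mkEl{B}{\mu}{\gF}g}=\norm{\fEl{ \orth{ \wt{h} } }{\gF}}=\norm{(I-\fEl{P}{ \mkEl{ \orth{\gH} }{\mu}{\gF} })\text{-part}}$ and uses that $g$ is recovered from its oblique and orthogonal pieces with control coming from $\norm{\mkEl{ \cP^{\gF} }{\mu}{\gG}}$, yielding $\norm{g}\le \text{const}\cdot\norm{\mkEl{B}{\mu}{\gF}g}$. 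Conversely, if $\mkEl{B}{\mu}{\gF}$ is boundedly invertible, then from \fref{4-2} we read off
\aeqts
  \mkEl{ \cP^{\gF} }{\mu}{\gG}\,h
    = \mkEl{B}{\mu}{\gF}^{-1}\,\fEl{P}{ \mkEl{ \orth{\gH} }{\mu}{\gF} }\,h ,
\zeqts
and since the orthogonal projection has norm $\le 1$, this immediately gives $\norm{\mkEl{ \cP^{\gF} }{\mu}{\gG}h}\le\norm{\mkEl{B}{\mu}{\gF}^{-1}}\cdot\norm{h}$ on each $\fEl{ \iEl{\gH}{n} }{\mu}$, uniformly in $n$.

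\textbf{Passing to the limit and the main obstacle.}
Because the bounds obtained on each $\fEl{ \iEl{\gH}{n} }{\mu}$ do not depend on $n$, the union $\bigcup_n\fEl{ \iEl{\gH}{n} }{\mu}$ being dense in $\fEl{\gH}{\mu}$ by \fref{3-thm1-4} lets me extend the inequalities to all of $\fEl{\gH}{\mu}$ by continuity, which completes both directions. The step I expect to be the genuine obstacle is establishing that $\mkEl{B}{\mu}{\gF}$ has \emph{dense range} in $\mkEl{ \orth{\gH} }{\mu}{\gF}$, so that the bounded-below estimate actually upgrades to bounded invertibility rather than just bounded invertibility onto the closure of the range. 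For this I would argue that $\fEl{P}{ \mkEl{ \orth{\gH} }{\mu}{\gF} }$ maps $\fEl{\gH}{\mu}$ onto a dense subset of $\mkEl{ \orth{\gH} }{\mu}{\gF}$ and that, restricting the identity \fref{4-2} to the dense family of finite-order $h$, every element of $\mkEl{ \orth{\gH} }{\mu}{\gF}$ is approximated by vectors $\mkEl{B}{\mu}{\gF}\mkEl{ \cP^{\gF} }{\mu}{\gG}h$; the symmetry between the $\gF$- and $\gG$-roles (Remark~\ref{s2-r2} versus Theorem~\ref{s2-t2}) is what guarantees the surjectivity needed here. Once density of the range together with the bounded-below estimate is in hand, boundedness of $\mkEl{B}{\mu}{\gF}^{-1}$ follows, closing the equivalence.
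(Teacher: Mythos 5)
Your proposal is correct and follows essentially the same route as the paper: both directions rest on the identity \fref{4-2}, which yields $\cP^{\gF}_{\mu,\gG}=B_{\mu,\gF}^{-1}P_{\gH_{\mu,\gF}^{\perp}}$ on the finite-order subspaces $\gH_{\mu}^{(n)}$ in one direction and $h=B_{\mu,\gF}\,\cP^{\gF}_{\mu,\gG}h$ for $h\in\gH_{\mu}^{(n)}\ominus\gH_{\mu,\gF}^{(n)}$ (hence $B_{\mu,\gF}^{-1}=\cP^{\gF}_{\mu,\gG}$ on a dense subset of $\gH_{\mu,\gF}^{\perp}$, which settles the dense-range issue you single out) in the other, with injectivity coming from $\gH_{\mu,\gF}\cap\gH_{\mu,\gG}=\{0\}$. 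The only cosmetic deviation is that the paper derives this trivial intersection by passing through Theorem \ref{3-thm1} and condition (iii) of Theorem \ref{s0-t1} rather than by your direct positive-angle argument, and your pointer to \fref{3-thm1-18} is a slight misattribution (that chain concerns the evaluation functional), but neither affects the substance.
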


\begin{proof}
  Suppose first that $\mkEl{B}{\mu}{\gF}$ has a bounded inverse
  $\mkEl{ \Inv{B} }{\mu}{\gF}$. Then for $h \in \fEl{ \iEl{\gH}{n} }{\mu}$,
  in view of \fref{4-1} and \fref{4-2}, we have
  \begin{align*}
    \mkEl{ \cP^{\gF} }{\mu}{\gG}  h
    = \mkEl{ \Inv{B} }{\mu}{\gF}
      \fEl{P}{ \mkEl{ \orth{\gH} }{\mu}{\gF} }  h \qPunkt
  \end{align*}
  If $n \rightarrow \infty$, this gives us the boundedness of
  the projection $\mkEl{ \cP^{\gF} }{\mu}{\gG}$ in $\fEl{\gH}{\mu}$.
  
  Conversely, suppose that the projection $\mkEl{ \cP^{\gF} }{\mu}{\gG}$
  is bounded in $\fEl{\gH}{\mu}$.
  If $h \in \fEl{ \iEl{\gH}{n} }{\mu} \ominus \mkEl{ \iEl{\gH}{n} }{\mu}{\gF}$,
  then the decomposition \fref{4-1} provides us
  \begin{align*}
    h = \fEl{ \wt{h} }{\gF}^\bot
  \end{align*}
  and identity \fref{4-2} yields
  \begin{align}       \label{4-4}
    h = \mkEl{ B }{\mu}{\gF}
        \mkEl{ \cP^{\gF} }{\mu}{\gG}  h \qPunkt
  \end{align}
  Since $\mkEl{ \cP^{\gF} }{\mu}{\gG}$ is bounded in $\fEl{\gH}{\mu}$,
  Theorem \ref{3-thm1} implies that the Riesz projection $\fEl{P}{+}$
  is bounded in $\fEl{L^2}{\mu}$. Then it follows from condition (iii)
  in Theorem \ref{s0-t1} that
  \begin{align*}
    \mkEl{\gH}{\mu}{\gF}  \cap  \mkEl{\gH}{\mu}{\gG}  = \gklam{0} \qPunkt
  \end{align*}
  Thus, from the shape \fref{4-3} of the operator $\mkEl{B}{\mu}{\gF}$,
  we infer that
  \begin{align*}
    \mathrm{ker} \, \mkEl{B}{\mu}{\gF} = \gklam{0}
  \qquad
  \qquad
  \mbox{and}
  \qquad
  \qquad
    \rklamFunk{ \mkEl{ \cP^{\gF} }{\mu}{\gG} }
              { \fEl{ \orth{\gH} }{\gF} }
      = \fEl{ \gH }{\gG}  \qPunkt
  \end{align*}
  Now equation \fref{4-4} can be rewritten in the form
  \begin{align*}
    \mkEl{ \Inv{B} }{\mu}{\gF}  h 
    = \mkEl{ \cP^{\gF} }{\mu}{\gG}  h \qKomma
  \qquad
  \qquad
  \qquad
  \qquad
  h
    \in \fEl{ \iEl{\gH}{n} }{\mu} 
    \ominus 
    \mkEl{ \iEl{\gH}{n} }{\mu}{\gF} \qPunkt 
  \end{align*}
  The limit $n \rightarrow \infty$, \eqref{3-thm1-2} and \eqref{3-thm1-4}  give us the desired result.
\end{proof}

The combination of Thereom \ref{4-thm1} with Theorem \ref{3-thm1}
leads us to the following result.

\begin{thm}       \label{4-thm2}
  Let $\mu \in \rklamFunk{ \fEl{\cM^{1}}{+} }{ \dT }$ be
  a Szeg\H{o} measure. Then the Riesz projection $\fEl{P}{+}$
  is bounded in $\fEl{L^2}{\mu}$ if and only if the operator
  $\mkEl{B}{\mu}{\gF}$ defined in \fref{4-3} is boundedly invertible.
\end{thm}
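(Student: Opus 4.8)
The plan is to obtain Theorem~\ref{4-thm2} simply by chaining together the two equivalences already established in the preceding sections, namely Theorem~\ref{3-thm1} and Theorem~\ref{4-thm1}; indeed, the sentence preceding the statement announces exactly this combination. First I would invoke Theorem~\ref{3-thm1}, which asserts that, for a Szeg\H{o} measure $\mu$, the Riesz projection $\fEl{P}{+}$ is bounded in $\fEl{L^2}{\mu}$ if and only if the skew projection $\mkEl{ \cP^{\gF} }{\mu}{\gG}$, projecting $\fEl{\gH}{\mu}$ onto $\mkEl{\gH}{\mu}{\gG}$ parallel to $\mkEl{\gH}{\mu}{\gF}$, is bounded in $\fEl{\gH}{\mu}$. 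This step transfers the question about $\fEl{P}{+}$ in the function space $\fEl{L^2}{\mu}$ into an equivalent question about $\mkEl{ \cP^{\gF} }{\mu}{\gG}$ in the model space $\fEl{\gH}{\mu}$.

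Next I would apply Theorem~\ref{4-thm1}, which characterizes the boundedness of $\mkEl{ \cP^{\gF} }{\mu}{\gG}$ in $\fEl{\gH}{\mu}$ by the bounded invertibility of the operator $\mkEl{B}{\mu}{\gF}$ introduced in \fref{4-3}. Composing the two biconditionals then yields directly that $\fEl{P}{+}$ is bounded in $\fEl{L^2}{\mu}$ if and only if $\mkEl{B}{\mu}{\gF}$ is boundedly invertible, which is precisely the claimed assertion. No interpolating construction is needed: the equivalence is obtained by transitivity of ``if and only if''.

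The one point that must be verified is that the standing hypotheses of the two auxiliary results are met. Both Theorem~\ref{3-thm1} and Theorem~\ref{4-thm1} are stated under the assumption that $\mu \in \rklamFunk{ \fEl{\cM^{1}}{+} }{ \dT }$ is a Szeg\H{o} measure, which is exactly the hypothesis imposed in the present statement; in particular this guarantees $\omega = \infty$ and $\gamma \in l_2$ (via Remark~\ref{s0-r1}), so that the spaces $\mkEl{ \orth{\gH} }{\mu}{\gF}$ and $\mkEl{ \orth{\gH} }{\mu}{\gG}$ are nontrivial and the operator $\mkEl{B}{\mu}{\gF}$ from \fref{4-3} is genuinely defined. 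Since the assumptions match verbatim, the chain of equivalences is legitimate and there is no substantive obstacle to overcome; the only care required is in aligning the notation for the projections and restrictions across the two theorems.
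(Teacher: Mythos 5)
Your proposal is correct and is exactly the paper's own argument: the authors state Theorem~\ref{4-thm2} as the immediate combination of Theorem~\ref{3-thm1} and Theorem~\ref{4-thm1}, both of which carry the same Szeg\H{o}-measure hypothesis, so the equivalence follows by transitivity just as you describe.
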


  Let $f \in {\mathcal Pol}$ be given by \fref{s0-1}.
  Along with the Riesz projection $\fEl{P}{+}$, we consider
  the projection $\fEl{P}{-}$, which is defined by:
  \begin{align*}
    \rklamFunk{ \rklam{ \fEl{P}{-} } }{t}
    :=  \sum_{ -k \in I \cap \N_0 }
          { \fEl{a}{k}  t^k } \qKomma
        \qquad  \qquad  t \in \dT   \qPunkt
  \end{align*}
  Obviously,
  \begin{align*}
    \fEl{P}{-}  = \overline{
                    \fEl{P}{+}  \overline{f}
                  } \qKomma
  \qquad
  \qquad
  \mbox{and}
  \qquad
  \qquad
    \fEl{P}{+}  = \overline{
                    \fEl{P}{-}  \overline{f}
                  } \qPunkt
  \end{align*}
Thus, the boundedness of one of the projections $\fEl{P}{+}$
and $\fEl{P}{-}$ in $\fEl{L^2}{\mu}$ implies the boundedness
of the other one. It is readily checked that the change from the
projection $\fEl{P}{+}$ to $\fEl{P}{-}$ is connected
with changing the roles of the spaces $\mkEl{\gH}{\mu}{\gG}$
and $\mkEl{\gH}{\mu}{\gF}$. Thus we obtain the following result,
which is dual to Theorem \ref{4-thm2}.

\begin{thm}       \label{4-thm3}
  Let $\mu \in \rklamFunk{ \fEl{\cM^{1}}{+} }{ \dT }$ be
  a Szeg\H{o} measure. Then the Riesz projection $\fEl{P}{+}$
  is bounded in $\fEl{L^2}{\mu}$ if and only if the operator
  $\mkEl{B}{\mu}{\gG} : \mkEl{\gH}{\mu}{\gF} \rightarrow \mkEl{ \orth{\gH} }{\mu}{\gG}$ defined by
  \begin{align}\label{4-thm3-1}
    \mkEl{B}{\mu}{\gG}h
    := {\fEl{P}{ \mkEl{ \orth{\gH} }{\mu}{\gG} }}h
  \end{align}
  is boundedly invertible. Here the symbol $\fEl{P}{ \mkEl{ \orth{\gH} }{\mu}{\gG} }$
  stand for the orthogonal projection from $\fEl{\gH}{\mu}$ onto
  $\mkEl{ \orth{\gH} }{\mu}{\gG}$.
\end{thm}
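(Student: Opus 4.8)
The plan is to reduce the assertion to Theorem~\ref{4-thm2} by making precise the symmetry between positive and negative frequencies indicated in the discussion above. The appropriate tool is the complex conjugation operator $J\colon\fEl{L^2}{\mu}\to\fEl{L^2}{\mu}$ given by $Jf:=\overline f$. Since $\abs{\overline{f(t)}}=\abs{f(t)}$ for $t\in\dT$, the map $J$ is an anti-unitary involution: it is a conjugate-linear isometric bijection with $J^2=I$. Moreover $J$ fixes the constant functions, so $J(\fEl{\dC}{\dT})=\fEl{\dC}{\dT}$ and hence $J(\fEl{\gH}{\mu})=\fEl{\gH}{\mu}$.

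First I would record the action of $J$ on the data of Theorem~\ref{4-thm2}. Because $\overline{t^{-k}}=t^k$ holds on $\dT$, the descriptions \eqref{s2-t2-1} and \eqref{s2-r2-1} yield $J(\mkEl{\gH}{\mu}{\gF})=\mkEl{\gH}{\mu}{\gG}$; as $J$ is an isometric involution leaving $\fEl{\gH}{\mu}$ invariant, passing to orthogonal complements within $\fEl{\gH}{\mu}$ gives $J(\mkEl{\orth{\gH}}{\mu}{\gF})=\mkEl{\orth{\gH}}{\mu}{\gG}$ as well. Since an anti-unitary map carries the orthogonal projection onto a closed subspace to the orthogonal projection onto the image subspace, I obtain the intertwining relation
\begin{align}\label{4-thm3-pf-1}
  J\,\fEl{P}{\mkEl{\orth{\gH}}{\mu}{\gF}}\,J
    = \fEl{P}{\mkEl{\orth{\gH}}{\mu}{\gG}} \qPunkt
\end{align}
A direct computation on trigonometric polynomials furthermore gives $J\,\fEl{P}{+}\,J=\fEl{P}{-}$, which recovers in operator form the equivalence of the boundedness of $\fEl{P}{+}$ and $\fEl{P}{-}$ noted before the theorem.

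The crucial step is to conjugate the operator $\mkEl{B}{\mu}{\gF}$ from \eqref{4-3} by $J$. For $h\in\mkEl{\gH}{\mu}{\gF}$ we have $Jh\in\mkEl{\gH}{\mu}{\gG}$, which lies in the domain of $\mkEl{B}{\mu}{\gF}$, and using \eqref{4-thm3-pf-1} I would compute
\begin{align*}
  J\,\mkEl{B}{\mu}{\gF}\,J\,h
    &= J\,\fEl{P}{\mkEl{\orth{\gH}}{\mu}{\gF}}\,J\,h \\
    &= \fEl{P}{\mkEl{\orth{\gH}}{\mu}{\gG}}\,h
     = \mkEl{B}{\mu}{\gG}\,h \qPunkt
\end{align*}
Hence $\mkEl{B}{\mu}{\gG}=J\,\mkEl{B}{\mu}{\gF}\,J$ as operators from $\mkEl{\gH}{\mu}{\gF}$ onto $\mkEl{\orth{\gH}}{\mu}{\gG}$. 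Because $J$ is an anti-unitary involution, conjugation by $J$ preserves operator norms and sends a bounded inverse to a bounded inverse; consequently $\mkEl{B}{\mu}{\gG}$ is boundedly invertible if and only if $\mkEl{B}{\mu}{\gF}$ is. Combining this with Theorem~\ref{4-thm2}, which asserts that $\fEl{P}{+}$ is bounded in $\fEl{L^2}{\mu}$ precisely when $\mkEl{B}{\mu}{\gF}$ is boundedly invertible, yields the claim.

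I expect no serious obstacle here: the argument is essentially a clean interchange of the roles of $\gF$ and $\gG$. The only points demanding care are the bookkeeping for the conjugate-linear map $J$ — specifically, verifying that anti-unitarity (rather than mere boundedness) is what guarantees that $J\,\fEl{P}{\mkEl{\orth{\gH}}{\mu}{\gF}}\,J$ is again the orthogonal projection onto the image subspace, and that bounded invertibility transfers across the conjugate-linear intertwining $\mkEl{B}{\mu}{\gG}=J\,\mkEl{B}{\mu}{\gF}\,J$.
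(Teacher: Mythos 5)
Your proof is correct and rests on the same idea as the paper's: complex conjugation interchanges the roles of $\mkEl{\gH}{\mu}{\gF}$ and $\mkEl{\gH}{\mu}{\gG}$, so the statement is the ``dual'' of Theorem~\ref{4-thm2}. If anything, your version is tighter than the paper's sketch, since by establishing the intertwining $\mkEl{B}{\mu}{\gG}=J\,\mkEl{B}{\mu}{\gF}\,J$ directly you avoid the paper's detour through $\fEl{P}{-}$ and the implicit re-derivation of Theorems~\ref{3-thm1} and~\ref{4-thm1} with $\gF$ and $\gG$ exchanged.
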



\section{Matrix Representation of the Operator $\mkEl{B}{\mu}{\gG}$
          in Terms of the Schur Parameters Associated With the
          Measure $\mu$}
\label{s5}


Let $\mu \in \rklamFunk{ \fEl{\cM^{1}}{+} }{ \dT }$ be
a Szeg\H{o} measure. We consider the simple unitary
colligation $\fEl{\Delta}{\mu}$ of the type \fref{Nr.2.2} which
is associated with the measure $\mu$. In this case we have
(see Section \ref{s2})
\begin{align*}
  \mkEl{ \orth{\gH} }{\mu}{\gF} \neq  \gklam{0}
  \qquad
  \qquad
  \mbox{and}
  \qquad
  \qquad
  \mkEl{ \orth{\gH} }{\mu}{\gG} \neq  \gklam{0}
\end{align*}
The operator $\mkEl{B}{\mu}{\gG}$ acts between the
subspaces $\mkEl{\gH}{\mu}{\gF}$ and $\mkEl{ \orth{\gH} }{\mu}{\gG}$.
According to the matrix description of the operator
$\mkEl{B}{\mu}{\gG}$ we consider particular orthogonal
bases in these subspaces. In the subspace $\mkEl{ \gH }{\mu}{\gF}$ we have already
considered one such basis, namely the basis consisting
of the trigonometric polynomials $\fKlam{\varphi}{n}{1}{\infty}$
(see Theorem \ref{s2-t2}). Regarding the construction of
an orthonormal basis in $\mkEl{ \orth{\gH} }{\mu}{\gG}$,
we first complete the system $\fKlam{\varphi}{n}{1}{\infty}$
to an orthonormal basis in $\fEl{\gH}{\mu}$. This procedure
is described in more detail in \cite{7}.

We consider the orthogonal decomposition
\begin{align}\label{Nr.67}
& \gH_\mu = \gH_{\mu,\gF} \oplus \gH_{\mu,\gF}^\perp.
\end{align}
Denote by $\tilde{\gL}_0$ the wandering subspace which generates the subspace associated with the unilateral shift 
$V_{T_\mu^\ast}$. Then (see Proposition \ref{3.4})
$\dim \tilde{\gL}_0 = 1$ and, since $V_{T_\mu^\ast}$ is an isometric operator, we have
\begin{align}\label{Nr.3.10}
& V_{T_\mu^\ast} = {\rm Rstr.}_{\gH_{\mu,\gF}^\perp} (U_\mu^\times)^\ast.
\end{align}
Consequently,
\begin{align}\label{Nr.3.11}
& \gH_{\mu,\gF}^\perp 
 = \bigoplus\limits_{n=0}^\infty V_{T_\mu^\ast}^n (\tilde{\gL}_0)
 = \bigvee_{n=0}^\infty (T_\mu^\ast)^n (\tilde{\gL}_0)
 = \bigvee_{n=0}^\infty [ (U_\mu^\times)^\ast]^n (\tilde{\gL}_0).
\end{align}
There exists (see \cite[Corollary~1.10]{7}) a unique unit function $\psi_1\in\tilde{\gL}_0$ which fulfills
\begin{align}\label{Nr.3.12}
& \bigl( G_{\mu}^\ast (1), \psi_1 \bigr)_{L_\mu^2} > 0.
\end{align}
Because of (\ref{Nr.3.10}), (\ref{Nr.3.11}), and (\ref{Nr.3.12}) it follows that the
sequence $(\psi_k)_{k=1}^\infty$, where
\begin{align}\label{Nr.3.13}
& \psi_k := [(U_\mu^\times)^\ast]^{k-1} \psi_1,
 \quad k\in\N,
\end{align}
is the unique orthonormal basis of the space $\gH_{\mu,\gF}^\perp$ which satisfies the conditions
\begin{align}\label{Nr.3.14}
& \bigl( G_{\mu}^\ast (1), \psi_1 \bigr)_{L_\mu^2} > 0,
 \quad \psi_{k+1} = (U_\mu^\times)^\ast \psi_k,
 \quad k\in \N,
\end{align}
or equivalently
\begin{align}\label{Nr.3.15}
 & \bigl( G_{\mu}^\ast (1), \psi_1 \bigr)_{L_\mu^2} > 0,
 \quad \psi_{k+1} (t) = t^k\cdot \psi_1 (t),
 \; t\in\mathbb T, \quad k\in \N.
\end{align}
According to the considerations in \cite{7} we introduce the following notion.

\begin{defn}\label{3.5}
 The constructed orthonormal basis
\begin{align}\label{Nr.3.16}
 & \varphi_0, \varphi_1, \varphi_2, \ldots ;\;  \psi_1, \psi_2, \ldots
\end{align}
 in the space $L_\mu^2$ which satisfies the conditions \eqref{Nr.3.2} and \eqref{Nr.3.14} is called the
 \emph{cano\-ni\-cal orthonormal basis in} $L_\mu^2$.
\end{defn}

Note that the analytic structure of the system $(\psi_k)_{k=1}^\infty$ is described in the paper \cite{10}.

Obviously, the canonical orthonormal basis \eqref{Nr.3.16} in $L_\mu^2$ is uniquely determined
by the conditions \eqref{Nr.3.2} and \eqref{Nr.3.14}. 
Here the sequence $(\varphi_k)_{k=0}^\infty$ is an orthonormal system of
polynomials (depending on $t^*$). The orthonormal system $(\psi_k)_{k=1}^\infty$ is
built with the aid of the operator $U_\mu^\times$ from the function
$\psi_1$ $($see \eqref{Nr.3.13}$)$ in a similar way as the system
$(\varphi_k)_{k=0}^\infty$ was built from the function $\varphi_0$
$($see \eqref{Nr.3.1} and \eqref{Nr.3.2}$)$. The only difference is that the system
$\left( [(U_\mu^\times)^\ast ]^k \psi_1 \right)_{k=0}^\infty$ is
orthonormal, whereas in the general case the system $\left(
(U_\mu^\times)^k \varphi_0\right)_{k=0}^\infty$ is not
orthonormal. In this respect, the sequence $(\psi_k)_{k=1}^\infty$
can be considered as a natural completion of the system of
orthonormal polynomials $(\varphi_k)_{k=0}^\infty$ to an orthonormal
basis in $L_\mu^2$.


\begin{rem}\label{3.6}
The orthonormal system
\begin{align}\label{Nr.3.16b}
& \varphi_1,\varphi_2,\ldots ;\;  \psi_1, \psi_2,\ldots
\end{align}
is an orthonormal basis in the space $\gH_\mu$. We will call it
the \emph{canonical orthonormal basis in} $\gH_\mu$. 
\end{rem}

It is well known (see, e.g., Brodskii \cite{8}) that one can consider simultaneously together with the simple unitary
colligation (\ref{Nr.2.2}) the adjoint unitary colligation
\begin{equation}\label{Nr.3.29}
 \tilde{\bigtriangleup}_\mu := 
 (\gH_\mu, \mathbb C, \mathbb C;T_\mu^\ast, G_\mu^\ast, F_\mu^\ast, S_\mu^\ast)
\end{equation}
which is also simple. Its characteristic function $\Theta_{\tilde{\bigtriangleup}_\mu}$ is for
each $z\in\mathbb D$ given by
\begin{equation*}
 \Theta_{\tilde{\bigtriangleup}_\mu} (z) = \Theta_{\bigtriangleup_\mu}^{\ast} (z^{\ast}).
\end{equation*}
We note that the unitary colligation (\ref{Nr.3.29}) is associated with the operator $(U_\mu^\times)^\ast$.
It can be easily checked that the action of $(U_\mu^\times)^\ast$ is given for each
$ f\in L_\mu^2$ by
\[ [(U_\mu^\times)^\ast f] (t) = t\cdot f(t),\quad t\in\mathbb T . \]
If we replace the operator $U_\mu^\times $ by $(U_\mu^\times)^\ast$ in the preceding considerations,
which have lead to the canonical orthonormal basis (\ref{Nr.3.16}), then we obtain an orthonormal basis
of the space $L_\mu^2$ which consists of two sequences
\begin{equation}
  \label{Nr.ONB}
  (\tilde{\varphi}_j)_{j=0}^\infty 
\quad\mbox{and}\quad
  (\tilde{\psi}_j)_{j=1}^\infty
\end{equation}
of functions. From our treatments above it follows that the orthonormal basis (\ref{Nr.ONB}) is uniquely
determined by the following conditions:
\begin{enumerate}
\item[(a)]
The sequence $(\tilde{\varphi}_j)_{k=0}^\infty$ arises from the result of the Gram-Schmidt orthogonalization
procedure of the sequence $\left( [(U_\mu^\times)^\ast]^n {\bf 1}\right)_{n=0}^\infty$ and additionally
taking into account the normalization conditions
\begin{equation*}
 \bigl( [(U_\mu^\times)^\ast]^n {\bf 1}, \tilde{\varphi}_n \bigr)_{L_\mu^2} > 0,
\quad n\in \N_0.
\end{equation*}
\item[(b)]
The relations
\[ \bigl(F_{\mu}(1), \tilde{\psi}_1\bigr)_{L_\mu^2} > 0
\quad\mbox{and}\quad
   \tilde{\psi}_{k+1} = U_\mu^\times \tilde{\psi}_k,\quad k\in \N, \]
hold. 
\end{enumerate}
It can be easily checked that
\[ \tilde{\varphi}_{k} = \varphi_k^{\ast}, 
\quad k\in \N_0, 
\vspace{-2mm} \]
and \vspace{-2mm}
\[ \tilde{\psi}_{k} = \psi_k^{\ast},
\quad k\in \N.
\]

According to the paper \cite{7} we introduce the following notion.

\begin{defn}\label{3.14}
The orthogonal basis
\begin{equation}\label{Nr.3.31}
 \varphi_0^{\ast}, \varphi_1^{\ast}, \varphi_2^{\ast},\ldots ; \psi_1^{\ast}, \psi_2^{\ast}, \ldots
\end{equation}
is called the conjugate canonical orthonormal basis with respect to the canonical orthonormal basis \eqref{Nr.3.16}.
\end{defn}

We note that $\varphi_0=\varphi_0^*=1$.
Similarly as (\ref{Nr.3.8}) the identity
\begin{equation}\label{Nr.3.38}
 \bigvee_{k=0}^{n} [(U_\mu^\times)^\ast]^k {\bf 1} 
 = \left( \bigvee_{k=0}^{n-1} (T_\mu^\ast)^k G_\mu^\ast (1) \right) \oplus \mathbb C_\mathbb T
\end{equation}
can be verified. Thus,
 \begin{equation}\label{Nr.3.39}
\gH_{\mu,\gF} = \bigvee_{k=1}^\infty \varphi_k,\quad \gH_{\mu,\gG}
= \bigvee_{k=1}^\infty \varphi_k^{\ast},
\end{equation}
 \begin{equation}\label{Nr.3.40}
\gH_{\mu,\gF}^\perp = \bigvee_{k=1}^\infty \psi_k,\quad
\gH_{\mu,\gG}^\perp = \bigvee_{k=1}^\infty \psi_k^{\ast}.
\end{equation}

In \cite[Chapter 3]{7} the unitary operator $\cU$ was introduced  which
maps the elements of the canonical basis (\ref{Nr.3.16}) onto the corresponding elements of the
conjugate canonical basis (\ref{Nr.3.31}). More precisely, we consider the operator
\begin{equation}\label{5-16}
 \cU_\mu \varphi_n = \varphi_n^{\ast},\quad n\in \N_0,
\qquad \mbox{and} \qquad 
 \cU_\mu \psi_n = \psi_n^{\ast},\quad n\in \N.
\end{equation}
The operator $\cU_\mu$ is related to the conjugation operator in $L_\mu^2$. Namely, if
$f\in L_\mu^2$ and if
 \begin{equation*}
f =\sum_{k=0}^\infty \alpha_k \varphi_k + \sum_{k=1}^\infty
\beta_k\psi_k,
\end{equation*}
then
\begin{equation*}
 f^{\ast} = \sum_{k=0}^\infty \alpha_k^{\ast} \varphi_k^{\ast} + \sum_{k=1}^\infty \beta_k^{\ast} \psi_k^{\ast} 
          = \sum_{k=0}^\infty \alpha_k^{\ast} \cU\varphi_k + \sum_{k=1}^\infty \beta_k^{\ast} \cU\psi_k.
\end{equation*}

From \fref{5-16} it follows that
\begin{align*}
  \fEl{\cU}{\mu}  ~:~ \fEl{\gH}{\mu}  \longrightarrow \fEl{\gH}{\mu}  \;, \qquad \cU_\mu(\bf 1)=\bf 1 \,.
\end{align*}
Let
\begin{align}       \label{5-17}
  \fEl{\cU}{ \fEl{\gH}{\mu} } 
    :=   \Rstr_{ \fEl{\gH}{\mu} }{\fEl{\cU}{\mu}}  \qPunkt
\end{align}
Then, obviously,
\begin{align}       \label{5-18}
  \fEl{\cU}{ \fEl{\gH}{\mu} } \fEl{\varphi}{n}  = \Adj{ \fEl{\varphi}{n} }
  \qquad
  \mbox{and}
  \qquad
  \fEl{\cU}{ \fEl{\gH}{\mu} } \fEl{\psi}{n}  = \Adj{ \fEl{\psi}{n} }
  \qKomma
  \qquad
  n \in \N \qPunkt
\end{align}
Clearly, the system $\fKlam{ \Adj{\psi} }{n}{1}{\infty}$
is an orthonormal basis in the space $\mkEl{ \orth{\gH} }{\mu}{\gG}$.
This system will turn out to be the special orthonormal basis of
the space $\mkEl{ \orth{\gH} }{\mu}{\gG}$ mentioned
at the beginning of this section. Thus, the matrix representation
of the operator
\begin{align*}
  \mkEl{B}{\mu}{\gG}  
  ~:~ \mkEl{\gH}{\mu}{\gF}  \longrightarrow \mkEl{ \orth{\gH} }{\mu}{\gG}
\end{align*}
will be considered with respect to the orthonormal bases
\begin{align}       \label{5-19}
  \fKlam{\varphi}{n}{1}{\infty}
  \qquad
  \qquad
  \mbox{and}
  \qquad
  \qquad
  (\psi_n^*)_{n=1}^\infty
\end{align}
of the spaces $\mkEl{\gH}{\mu}{\gF}$ and $\mkEl{ \orth{\gH} }{\mu}{\gG}$,
respectively. Let
\begin{align}       \label{5-20}
  \begin{pmatrix}
    \cR   & \cL   \\[5pt]
    \cP   & \cQ
  \end{pmatrix}
\end{align}
be the matrix representation of the operator $\fEl{\cU}{ \fEl{\gH}{\mu} }$
with respect to the canonical basis \fref{Nr.3.16b} of the space $\fEl{\gH}{\mu}$.
Then, from \fref{5-18} we infer that the columns
\begin{align*}
  \begin{pmatrix}
    \cR   \\[5pt]
    \cP
  \end{pmatrix}
  \qquad
  \qquad
  \mbox{and}
  \qquad
  \qquad
  \begin{pmatrix}
    \cL   \\[5pt]
    \cQ
  \end{pmatrix}
\end{align*}
of the block-matrix \fref{5-20} are the coefficients in the series
developments of $\Adj{ \fEl{\varphi}{n} }$ and $\Adj{ \fEl{\psi}{n} }$
with respect to the canonical basis \fref{Nr.3.16b}. If $h \in \fEl{\gH}{\mu}$
then clearly
\begin{align}       \label{5-21}
  \fEl{P}{ \mkEl{ \orth{\gH} }{\mu}{\gG} }  h
  = \sum_{k = 1}^{\infty}
    { 
      \rklamPaar{h}{ \Adj{ \fEl{\psi}{k} } } 
      \Adj{ \fEl{\psi}{k} }
    } \qPunkt
\end{align}
Thus, the matrix representation of the operator
$\fEl{P}{ \mkEl{ \orth{\gH} }{\mu}{\gG} }$ considered as
an operator acting between $\fEl{\gH}{\mu}$ and
$\mkEl{ \orth{\gH} }{\mu}{\gG}$ equipped with the
orthonormal bases \fref{Nr.3.16b} and $\fKlam{ \Adj{\psi} }{n}{1}{\infty}$
has the form
\begin{align*}
  \rklamPaar{ \Adj{\cL} }{ \Adj{\cQ} }  \qPunkt
\end{align*}
From this and the shape \fref{4-thm3-1} of the operator
$\mkEl{B}{\mu}{\gG}$, we obtain the following result.

\begin{thm}       \label{5-thm4}
  Let $\mu \in \rklamFunk{ \fEl{\cM^{1}}{+} }{ \dT }$ be
  a Szeg\H{o} measure. Then the matrix of the operator
  \begin{align*}
    \mkEl{B}{\mu}{\gG}  
     ~:~ \mkEl{\gH}{\mu}{\gF}  \longrightarrow \mkEl{ \orth{\gH} }{\mu}{\gG}
  \end{align*}
  with respect to the orthonormal bases $\fKlam{\varphi}{k}{1}{\infty}$
  and $\fKlam{ \Adj{\psi} }{n}{1}{\infty}$ of the spaces
  $\mkEl{\gH}{\mu}{\gF}$ and $\mkEl{ \orth{\gH} }{\mu}{\gG}$,
  respectively, is given by $\Adj{\cL}$ where $\cL$ is the block of
  the matrix given in \fref{5-20}. 
\end{thm}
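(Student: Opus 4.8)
The plan is to read off the matrix of $\mkEl{B}{\mu}{\gG}$ directly as a sub-block of the matrix of the orthogonal projection $\fEl{P}{\mkEl{\orth{\gH}}{\mu}{\gG}}$, which has already been identified in the discussion preceding the theorem. By the defining formula \fref{4-thm3-1}, the operator $\mkEl{B}{\mu}{\gG} : \mkEl{\gH}{\mu}{\gF} \to \mkEl{\orth{\gH}}{\mu}{\gG}$ is nothing but the restriction $\Rstr_{\mkEl{\gH}{\mu}{\gF}}\fEl{P}{\mkEl{\orth{\gH}}{\mu}{\gG}}$. Hence its matrix with respect to the bases \fref{5-19} is obtained from the matrix of $\fEl{P}{\mkEl{\orth{\gH}}{\mu}{\gG}}$ by keeping only those columns that are indexed by the vectors $\fKlam{\varphi}{k}{1}{\infty}$.

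To make this precise I would first recompute the matrix of $\fEl{P}{\mkEl{\orth{\gH}}{\mu}{\gG}}$, viewed as an operator from $\fEl{\gH}{\mu}$ (equipped with the canonical basis \fref{Nr.3.16b}) into $\mkEl{\orth{\gH}}{\mu}{\gG}$ (equipped with $\fKlam{\Adj{\psi}}{n}{1}{\infty}$). Starting from the expansion \fref{5-21} and using that, by \fref{5-18} and \fref{5-20}, the function $\Adj{\psi_n} = \fEl{\cU}{\fEl{\gH}{\mu}}\psi_n$ has the canonical-basis development $\Adj{\psi_n} = \sum_m \cL_{mn}\varphi_m + \sum_m \cQ_{mn}\psi_m$, the matrix entries reduce to the inner products $\rklamPaar{\varphi_n}{\Adj{\psi_k}}$ and $\rklamPaar{\psi_n}{\Adj{\psi_k}}$. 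Orthonormality of \fref{Nr.3.16b} together with the orthogonal splitting $\fEl{\gH}{\mu} = \mkEl{\gH}{\mu}{\gF}\oplus\mkEl{\orth{\gH}}{\mu}{\gF}$ then collapses these to the respective entries of $\Adj{\cL}$ and $\Adj{\cQ}$, yielding the block row $\rklamPaar{\Adj{\cL}}{\Adj{\cQ}}$ stated just before the theorem.

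Finally, since the polynomials $\fKlam{\varphi}{k}{1}{\infty}$ constitute exactly the $\mkEl{\gH}{\mu}{\gF}$-part of the canonical basis \fref{Nr.3.16b}, restricting $\fEl{P}{\mkEl{\orth{\gH}}{\mu}{\gG}}$ to $\mkEl{\gH}{\mu}{\gF}$ selects precisely the columns of $\rklamPaar{\Adj{\cL}}{\Adj{\cQ}}$ belonging to the $\varphi_k$, i.e. the block $\Adj{\cL}$. This gives the asserted matrix of $\mkEl{B}{\mu}{\gG}$ with respect to the bases in \fref{5-19}. I do not expect any genuinely hard step here; the only points demanding care are the bookkeeping of transposition and complex conjugation in passing from the coefficient arrays $\cL, \cQ$ to the matrix entries $\rklamPaar{\cdot}{\Adj{\psi_k}}$, and the verification that restricting to $\mkEl{\gH}{\mu}{\gF}$ indeed discards the $\psi$-columns without residual contribution, which rests on the orthogonality of $\mkEl{\gH}{\mu}{\gF}$ and $\mkEl{\orth{\gH}}{\mu}{\gF}$.
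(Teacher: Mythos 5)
Your proposal is correct and follows essentially the same route as the paper: the text preceding the theorem identifies the matrix of $P_{\gH_{\mu,\gG}^\perp}$ (from $\gH_\mu$ with the canonical basis to $\gH_{\mu,\gG}^\perp$ with the basis $(\psi_n^*)_{n=1}^\infty$) as the block row $(\cL^*,\ \cQ^*)$ via the expansion \eqref{5-21} and the column interpretation of \eqref{5-20}, and then obtains $\cL^*$ by restricting to $\gH_{\mu,\gF}$ exactly as you do. The bookkeeping of conjugation you flag is the only delicate point, and you handle it consistently with the paper.
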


Now Theorem \ref{4-thm3} can be reformulated in the following way.

\begin{cor}       \label{5-cor5}
  Let $\mu \in \rklamFunk{ \fEl{\cM^{1}}{+} }{ \dT }$ be
  a Szeg\H{o} measure. Then the Riesz projection $\fEl{P}{+}$
  is bounded in $\fEl{L^2}{\mu}$ if and only if 
  $\cL^*$ is boundedly invertible in $l_2$ where $\cL$ is the block of the matrix given in 
  \fref{5-20}.
\end{cor}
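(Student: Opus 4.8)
The plan is to read the statement as a direct translation of Theorem~\ref{4-thm3} into matrix language by way of Theorem~\ref{5-thm4}. By Theorem~\ref{4-thm3}, for a Szeg\H{o} measure $\mu$ the Riesz projection $\fEl{P}{+}$ is bounded in $\fEl{L^2}{\mu}$ if and only if the operator $\mkEl{B}{\mu}{\gG} : \mkEl{\gH}{\mu}{\gF} \to \mkEl{ \orth{\gH} }{\mu}{\gG}$ is boundedly invertible. Hence it suffices to establish that bounded invertibility of $\mkEl{B}{\mu}{\gG}$ is equivalent to bounded invertibility of $\Adj{\cL}$ in $l_2$, after which the two equivalences chain together.

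First I would invoke Theorem~\ref{5-thm4}, which identifies the matrix of $\mkEl{B}{\mu}{\gG}$ with respect to the orthonormal bases $\fKlam{\varphi}{k}{1}{\infty}$ of $\mkEl{\gH}{\mu}{\gF}$ and $\fKlam{ \Adj{\psi} }{n}{1}{\infty}$ of $\mkEl{ \orth{\gH} }{\mu}{\gG}$ as precisely $\Adj{\cL}$. Each of these two orthonormal bases furnishes a unitary isomorphism of the corresponding Hilbert space onto the coordinate space $l_2$, and under this pair of identifications the abstract operator $\mkEl{B}{\mu}{\gG}$ is carried to the operator on $l_2$ whose matrix is $\Adj{\cL}$. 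I would note in passing that $\mkEl{B}{\mu}{\gG}$ is automatically bounded, being the restriction to $\mkEl{\gH}{\mu}{\gF}$ of the orthogonal projection $\fEl{P}{ \mkEl{ \orth{\gH} }{\mu}{\gG} }$, so that $\Adj{\cL}$ really does define a bounded operator on $l_2$; the entire content of the criterion therefore resides in the invertibility.

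Since bounded invertibility of a Hilbert-space operator is preserved under unitary equivalence applied on either the domain or the codomain side, $\mkEl{B}{\mu}{\gG}$ is boundedly invertible exactly when the $l_2$-operator with matrix $\Adj{\cL}$ is boundedly invertible in $l_2$. Combining this equivalence with Theorem~\ref{4-thm3} yields the asserted equivalence between boundedness of $\fEl{P}{+}$ in $\fEl{L^2}{\mu}$ and bounded invertibility of $\Adj{\cL}$ in $l_2$.

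I do not expect a genuine obstacle here; the only point requiring mild care is conceptual rather than computational, namely making explicit that the two orthonormal bases identify the operator $\mkEl{B}{\mu}{\gG}$, which acts between two distinct infinite-dimensional subspaces, with a concrete operator on the single space $l_2$, and that ``boundedly invertible'' is an invariant of this identification. All of the analytic substance---the structure of $\mkEl{B}{\mu}{\gG}$, the explicit canonical and conjugate canonical bases, and the extraction of the block $\cL$ from the matrix of $\fEl{\cU}{ \fEl{\gH}{\mu} }$---has already been carried out in Sections~\ref{s4} and~\ref{s5}, so this final step is purely a matter of assembling those results.
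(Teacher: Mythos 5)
Your argument is exactly the paper's: the corollary is stated there as a direct reformulation of Theorem~\ref{4-thm3} via the matrix identification of $\mkEl{B}{\mu}{\gG}$ with $\Adj{\cL}$ provided by Theorem~\ref{5-thm4}, which is precisely the chain you assemble. The additional remarks on unitary identification with $l_2$ and invariance of bounded invertibility are correct and merely make explicit what the paper leaves implicit.
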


In \cite[Corollary 3.7]{7} the matrix $\cL$ was expressed
in terms of the Schur parameters associated with the measure
$\mu$. In order to write down this matrix we introduce the
necessary notions and terminology used in \cite{7}. The matrix
$\cL$ expressed in terms of the corresponding Schur parameter
sequence will the denoted by $\rklamFunk{\cL}{\gamma}$.

Let$$
  \Gamma l_2
  :=\left\{\gamma=(\gamma_j)_{j=0}^\infty\in l_2:\gamma_j\in\D,j\in\N_0\right\}.
$$
Thus, $\Gamma l_2$ is the subset of all $\gamma=(\gamma_j)_{j=0}^\infty\in\Gamma$, for which the product
$$\prod_{j=0}^\infty\left(1-|\gamma_j|^2\right)$$
converges. 

Let us mention the following well--known fact (see, for example,
Remark \ref{s0-r1})

\begin{prop}       \label{5-prop5}
  Let $\mu \in \rklamFunk{ \fEl{\cM^{1}}{+} }{ \dT }$. Then
  $\mu$ is a Szeg\H{o} measure if and only if
  $ \gamma$ belongs to $\Gamma \fEl{l}{2} $.
\end{prop}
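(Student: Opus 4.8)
The plan is to read the equivalence off the trace-type formula \eqref{s0-c1-1} together with the pointwise identity recorded in Remark \ref{s0-r1}; indeed the assertion is essentially the content of that remark, so the work is mainly to match up the bookkeeping. First I would recall that, by definition, $\mu$ is a Szeg\H{o} measure precisely when $\ln v\in L_m^1$, where $v$ is the density of the absolutely continuous part of $\mu$ in the Lebesgue decomposition \eqref{s0-r1-1}; and that membership $\gamma\in\Gamma l_2$ means exactly that the Schur parameter sequence is infinite (so $\omega=\infty$ and every $\gamma_j$ lies in $\D$) and square summable. Thus the claim reduces to: $\ln v\in L_m^1$ if and only if $\omega=\infty$ and $\sum_{j=0}^\infty|\gamma_j|^2<\infty$.

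The key step is to analyse the infinite product on the left of \eqref{s0-c1-1}. Since $0\le 1-|\gamma_j|^2\le 1$, the product $\prod_{j=0}^\omega(1-|\gamma_j|^2)$ is strictly positive if and only if no factor vanishes and $\sum|\gamma_j|^2<\infty$; the first requirement forces $|\gamma_j|<1$ for every $j$, i.e. $\omega=\infty$, because by the dichotomy of the Schur algorithm a finite $\omega=m$ would produce $|\gamma_m|=1$ and a vanishing factor. Hence the product is positive exactly when $\gamma\in\Gamma l_2$. Taking logarithms in \eqref{s0-c1-1}, this says that the integral $\int_\T\ln(1-|\ul\theta(t)|^2)\,m(dt)$ is finite (as opposed to $-\infty$) if and only if $\gamma\in\Gamma l_2$. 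Since $1-|\ul\theta|^2\in[0,1]$ makes $\ln(1-|\ul\theta|^2)$ nonpositive, finiteness of this integral is the same as $\ln(1-|\ul\theta|^2)\in L_m^1$.

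It remains to transfer this to $\ln v$. Here I would use the identity $\ln(1-|\ul\theta(t)|^2)=\ln 4+\ln v(t)-2\ln|\ul\Phi(t)+1|$, valid for $m$-a.e. $t\in\T$ (Remark \ref{s0-r1}), which follows from \eqref{s0-c1-2} together with $\Re\ul\Phi=v$. Because $\Re\Phi>0$ on $\D$, the function $\Phi$ is outer, whence $\ln|\ul\Phi+1|\in L_m^1$, exactly as in the proof of Corollary \ref{s0-c1}. Consequently $\ln(1-|\ul\theta|^2)$ and $\ln v$ differ by the $L_m^1$ function $\ln 4-2\ln|\ul\Phi+1|$, so $\ln(1-|\ul\theta|^2)\in L_m^1$ if and only if $\ln v\in L_m^1$. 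Combining this with the previous paragraph gives $\mu$ Szeg\H{o} $\iff\ln v\in L_m^1\iff\gamma\in\Gamma l_2$.

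The only genuinely delicate points, which I expect to be the main obstacle to phrase cleanly, are the convergence bookkeeping: one must treat the finite-$\omega$ and infinite-$\omega$ cases of the product uniformly, and must exploit the nonpositivity of $\ln(1-|\ul\theta|^2)$ to identify finiteness of its integral with $L_m^1$-membership. Every analytic ingredient beyond this — the formula \eqref{s0-c1-1}, the outer character of $\Phi$, and the a.e. identity for $\ln v$ — is already available from the earlier material, so no new input is needed.
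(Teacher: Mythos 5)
Your proposal is correct and follows the same route the paper intends: Proposition \ref{5-prop5} is stated there as a known fact with a pointer to Remark \ref{s0-r1}, and your argument is precisely an elaboration of that remark, combining the product formula \eqref{s0-c1-1} with the a.e.\ identity $\ln(1-|\ul\theta|^2)=\ln 4+\ln v-2\ln|\ul\Phi+1|$ and the integrability of $\ln|\ul\Phi+1|$ coming from the outer character of $\Phi$. The bookkeeping you flag (the finite-$\omega$ case killing the product via $|\gamma_m|=1$, and nonpositivity of $\ln(1-|\ul\theta|^2)$ identifying finiteness of the integral with $L_m^1$-membership) is handled correctly, so nothing is missing.
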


For a Schur parameter sequence $\gamma$ belonging to $\Gamma l_2$, we note that the sequence $(L_n(\gamma))_{n=0}^\infty$ introduced in formula (3.12) of \cite{7} via
\begin{eqnarray}\label{LnGamma}
&&L_0(\gamma):=1\textnormal{ and, for each positive integer }n,\textnormal{ via } L_n(\gamma):=\nonumber\\
&&\sum_{r=1}^n(-1)^r\sum_{s_1+s_2+\ldots+s_r=n}\sum_{j_1=n-s_1}^\infty\sum_{j_2=j_1-s_2}^\infty\ldots\sum_{j_r=j_{r-1}-s_r}^\infty\gamma_{j_1}\overline\gamma_{j_1+s_1}\ldots\gamma_{j_r}\overline\gamma_{j_r+s_r}\nonumber\\
\end{eqnarray}
plays a key role. Here the summation runs over all ordered $r$--tuples $(s_1,\ldots,s_r)$ of positive integers which satisfy $s_1+\ldots+s_r=n$. For example,
$$L_1(\gamma)=-\sum_{j=0}^\infty \gamma_j\overline{\gamma_{j+1}}$$
and
$$L_2(\gamma)=-\sum_{j=0}^\infty \gamma_j\overline{\gamma_{j+2}}+\sum_{j_1=1}^\infty\sum_{j_2=j_1-1}^\infty\gamma_{j_1}\overline{\gamma_{j_1+1}}\gamma_{j_2}\overline{\gamma_{j_2+1}}.$$
Obviously, if $\gamma\in\Gamma l_2$, then the series (\ref{LnGamma}) converges absolutely.

For each $\gamma=(\gamma_j)_{j=0}^\infty\in\Gamma l_2$, we set
\begin{equation}\label{Pik}
\Pi_k:=\prod_{j=k}^\infty D_{\gamma_j},\ \ k\in\N_0,
\end{equation}
where
\begin{equation}\label{DGamma}
D_{\gamma_j}:=\sqrt{1-|\gamma_j|^2},\ \ j\in\N_0.
\end{equation}
In the space $l_2$ we define the coshift mapping $W:l_2\rightarrow l_2$ via
\begin{equation}\label{coshift}
 (z_j)_{j=0}^\infty\mapsto(z_{j+1})_{j=0}^\infty.
\end{equation}

The following result is contained in \cite[Theorem 3.6, Corollary 3.7]{7}.

\begin{thm}       \label{5-thm7}
  Let $\mu \in \rklamFunk{ \fEl{\cM^{1}}{+} }{ \dT }$ be
  a Szeg\H{o} measure and let $\gamma \in \Gamma$ be the Schur
  parameter sequence associated with $\mu$. Then
  $ \gamma  \in \Gamma \fEl{l}{2} $
  and the block $\cL$ of the matrix \fref{5-20} has the form
  \begin{align}\label{5-thm7-1}
   &  \rklamFunk{\cL}{\gamma}
     =  \begin{pmatrix}        
         \Pi_1 & 0 & 0 &  \ldots \\
         \Pi_2L_1(W\gamma) & \Pi_2 & 0 & \ldots  \\
       \Pi_3L_2(W\gamma) & \Pi_3L_1(W^2\gamma) & \Pi_3 & \ldots  \\
          \vdots & \vdots & \vdots & \ddots \\
\Pi_nL_{n-1}(W\gamma) & \Pi_nL_{n-2}(W^2\gamma) & \Pi_nL_{n-3}(W^3\gamma) & \ldots \\
    \vdots & \vdots & \vdots &          
                \end{pmatrix}    \qKomma
  \end{align}
  where $\fEl{\Pi}{j}$, $\rklamFunk{ \fEl{L}{j} }{\gamma}$
  and $W$ are given via the formulas \fref{Pik}, \fref{LnGamma},
  and \fref{coshift}, respectively.
\end{thm}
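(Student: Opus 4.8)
The plan is to read off the matrix \eqref{5-thm7-1} as the content of \cite[Theorem 3.6, Corollary 3.7]{7} transcribed into the notation of the present section. The assertion $\gamma\in\Gamma l_2$ is immediate: since $\mu$ is a Szeg\H{o} measure, Proposition \ref{5-prop5} gives $\gamma\in\Gamma l_2$, so the product $\prod_{j=0}^\infty(1-|\gamma_j|^2)$ converges, each factor $\Pi_k$ in \eqref{Pik} is well defined, and the nested sums $L_n$ in \eqref{LnGamma} converge absolutely (because $\gamma\in l_2$). By the discussion preceding Theorem \ref{5-thm4}, the entries of the block $\cL$ are the Fourier coefficients $(\psi_j^*,\varphi_i)_{L_\mu^2}$ of the conjugate functions $\psi_j^*$ against the orthonormal polynomials $\varphi_i$; thus the task reduces to computing these inner products explicitly in terms of $\gamma$.

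First I would make both families explicit. The $\varphi_i$ are the orthonormal polynomials in $t^*$ associated with $\mu$, obtained by Gram--Schmidt from $((U_\mu^\times)^k\mathbf 1)_{k\ge0}$; they satisfy the Szeg\H{o} recurrence whose coefficients are the Schur parameters $\gamma_j$ and whose normalizing constants are the defects $D_{\gamma_j}=\sqrt{1-|\gamma_j|^2}$ from \eqref{DGamma}. The functions $\psi_j$ are generated from a single unit vector $\psi_1$ by $\psi_{k+1}(t)=t^k\psi_1(t)$ (see \eqref{Nr.3.15}), so that $\psi_j^*(t)=(t^*)^{j-1}\psi_1^*(t)$ on $\T$. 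Inserting these expressions into $(\psi_j^*,\varphi_i)_{L_\mu^2}$ and iterating the recurrence, one expects the following pattern. The vanishing above the diagonal comes from a frequency/degree mismatch between the powers of $t^*$ carried by $\psi_j^*$ and the monomials available in $\varphi_i$; the diagonal entry is the accumulated leading normalization $\Pi_n=\prod_{j\ge n}D_{\gamma_j}$; and each subdiagonal entry collects, with prefactor $\Pi_n$, all the nested cross-products $\gamma_{j_1}\overline{\gamma_{j_1+s_1}}\cdots\gamma_{j_r}\overline{\gamma_{j_r+s_r}}$ with $s_1+\cdots+s_r$ equal to the distance from the diagonal, which are exactly the summands of $L_{n-k}$ in \eqref{LnGamma}. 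The shift in the starting index of the relevant parameter window, as one moves along a row, is recorded by applying the coshift $W$ of \eqref{coshift} the appropriate number of times, producing the arguments $W^k\gamma$.

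The main obstacle is the combinatorial bookkeeping of this iteration: one must verify that telescoping the Szeg\H{o} recurrence assembles precisely the alternating nested sums $L_{n-k}(W^k\gamma)$, with the correct sign $(-1)^r$ and the correct defect prefactor $\Pi_n$, and that all rearrangements are justified by the absolute convergence guaranteed by $\gamma\in\Gamma l_2$. This is exactly the identification carried out in full in \cite[Theorem 3.6, Corollary 3.7]{7}. Accordingly, the cleanest route is to match the present objects --- the conjugate canonical basis \eqref{Nr.3.31}, the operator $\cU_{\gH_\mu}$ defined in \eqref{5-17}, and its block $\cL$ in \eqref{5-20} --- with the corresponding quantities in \cite{7}, and then invoke that result to obtain \eqref{5-thm7-1}.
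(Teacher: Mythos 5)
Your proposal is correct and matches the paper's treatment: the paper offers no independent proof of Theorem \ref{5-thm7} but simply states that the result is contained in \cite[Theorem 3.6, Corollary 3.7]{7}, and your argument likewise reduces to identifying the objects $\cU_{\gH_\mu}$, the canonical and conjugate canonical bases, and the block $\cL$ with their counterparts in \cite{7} and invoking that result (with $\gamma\in\Gamma l_2$ supplied by Proposition \ref{5-prop5}). The additional heuristic you give for where the triangular structure, the factors $\Pi_n$, and the sums $L_{n-k}(W^k\gamma)$ come from is consistent with the cited source but is not needed beyond the citation.
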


\begin{rem}       \label{5-rem8}
  It follows from Theorems \ref{5-thm4} and \ref{5-thm7}
  that the matrix representation of the operator
  \begin{align*}
    \mkEl{B}{\mu}{\gG}  
     ~:~ \mkEl{\gH}{\mu}{\gF}  \longrightarrow \mkEl{ \orth{\gH} }{\mu}{\gG}
  \end{align*}
  with respect to the orthonormal bases $\fKlam{\varphi}{k}{1}{\infty}$
  and $\fKlam{\psi^*}{n}{1}{\infty}$ of the spaces $\mkEl{\gH}{\mu}{\gF}$
  and $\mkEl{ \orth{\gH} }{\mu}{\gG}$, respectively, is given by the
  matrix $\rklamFunk{ \Adj{\cL} }{\gamma}$, where $\rklamFunk{\cL}{\gamma}$
  has the form \fref{5-thm7-1}.
\end{rem}


\section{Characterization of Helson-Szeg\H o measures in terms of the Schur parameters of the associated Schur function}
\label{s6}

The first criterion which characterizes Helson-Szeg\H o measures in the associated Schur parameter sequence was already 
obtained. It follows by combination of Theorem \ref{s0-t1}, Theorem \ref{4-thm3}, Proposition \ref{5-prop5},
Theorem \ref{5-thm7}, and Remark \ref{5-rem8}. This leads us to the following theorem, which is one of the main results of this paper.

\begin{thm}\label{s6-t1}
Let $\mu\in\cM_+^1(\T)$ and let $\gamma\in\Gamma$ be the sequence of Schur parameters associated with $\mu$.
Then $\mu$ is a Helson-Szeg\H o measure if and only if $\gamma\in\Gamma l_2$ and the operator $\cL^*(\gamma)$,
which is defined in $l_2$ by the matrix \eqref{5-thm7-1}, is boundedly invertible.
\end{thm}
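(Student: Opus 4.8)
The plan is to assemble the statement from the chain of equivalences prepared in Sections \ref{s0}--\ref{s5}, using the Szeg\H{o} property as the hinge that makes the operator-theoretic machinery of Sections \ref{s3}--\ref{s5} applicable. The analytic content has already been done; what remains is to concatenate the existing results in the correct logical order.

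First I would settle the membership $\gamma\in\Gamma l_2$. If $\mu$ is a Helson-Szeg\H{o} measure, then by Corollary \ref{s0-c1} (equivalently, by the final assertion of Remark \ref{s0-r1}) it is in particular a Szeg\H{o} measure, and hence Proposition \ref{5-prop5} yields $\gamma\in\Gamma l_2$. Conversely, if $\gamma\in\Gamma l_2$, then Proposition \ref{5-prop5} shows that $\mu$ is a Szeg\H{o} measure. Thus, in either direction, the condition $\gamma\in\Gamma l_2$ coincides with $\mu$ being a Szeg\H{o} measure, which is exactly the standing hypothesis under which Theorem \ref{4-thm3}, Corollary \ref{5-cor5}, and Theorem \ref{5-thm7} were proved.

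Granting that $\mu$ is a Szeg\H{o} measure, I would then concatenate three equivalences. By the equivalence (i)$\Leftrightarrow$(ii) of Theorem \ref{s0-t1}, the measure $\mu$ is Helson-Szeg\H{o} if and only if the Riesz projection $P_+$ is bounded in $L_\mu^2$. By Corollary \ref{5-cor5}, boundedness of $P_+$ is equivalent to the bounded invertibility of $\cL^*$ in $l_2$, where $\cL$ is the block of the matrix \eqref{5-20}. Finally, Theorem \ref{5-thm7} together with Remark \ref{5-rem8} identifies, for a Szeg\H{o} measure, this block $\cL$ with the explicit matrix $\cL(\gamma)$ of \eqref{5-thm7-1}, so that $\cL^*$ may be written $\cL^*(\gamma)$. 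Chaining these equivalences gives both implications of the theorem.

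I expect the only genuine care to be bookkeeping rather than analysis: one must verify that the Szeg\H{o} hypothesis is in force before invoking the results of Sections \ref{s4} and \ref{s5}, since those rely on $\gH_{\mu,\gF}^\perp\neq\{0\}$ and $\gH_{\mu,\gG}^\perp\neq\{0\}$ (Proposition \ref{s2-p1}) to give the operators $B_{\mu,\gG}$ and $\cL$ their meaning. No circularity arises, because in the forward direction the Helson-Szeg\H{o} property supplies the Szeg\H{o} property automatically, while in the reverse direction the assumption $\gamma\in\Gamma l_2$---which is part of the stated criterion---supplies it directly via Proposition \ref{5-prop5}.
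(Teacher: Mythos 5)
Your proposal is correct and follows essentially the same route as the paper, which obtains Theorem \ref{s6-t1} precisely by combining Theorem \ref{s0-t1}, Theorem \ref{4-thm3} (in the reformulated guise of Corollary \ref{5-cor5}), Proposition \ref{5-prop5}, Theorem \ref{5-thm7}, and Remark \ref{5-rem8}. Your explicit bookkeeping of where the Szeg\H{o} hypothesis enters (via Corollary \ref{s0-c1} in the forward direction and via Proposition \ref{5-prop5} in the reverse direction) is a careful spelling-out of what the paper leaves implicit.
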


\begin{cor}\label{s6-c1}
Let $\mu\in\cM_+^1(\T)$ and let $\gamma\in\Gamma$ be the sequence of Schur parameters associated with $\mu$.
Then $\mu$ is a Helson-Szeg\H o measure if and only if $\gamma\in\Gamma l_2$ and there exists some positive constant $C$
such that for each $h\in l_2$ the inequality
\begin{align}\label{s6-c1-1}
 \|\cL^*(\gamma)h\|\ge C\|h\|
\end{align}
is satisfied.
\end{cor}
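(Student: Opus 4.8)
The plan is to derive Corollary~\ref{s6-c1} from Theorem~\ref{s6-t1}. Since Theorem~\ref{s6-t1} already characterizes $\mu$ being a Helson-Szeg\H o measure by the two conditions that $\gamma\in\Gamma l_2$ and that $\cL^*(\gamma)$ is boundedly invertible in $l_2$, it suffices to show that, under the standing assumption $\gamma\in\Gamma l_2$, bounded invertibility of $\cL^*(\gamma)$ is equivalent to the lower estimate \eqref{s6-c1-1}. I would therefore prove this equivalence of the two operator conditions and then simply combine it with Theorem~\ref{s6-t1}.

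One implication is immediate. If $\cL^*(\gamma)$ possesses a bounded inverse, then for every $h\in l_2$ we have $\|h\|=\|(\cL^*(\gamma))^{-1}\cL^*(\gamma)h\|\le\|(\cL^*(\gamma))^{-1}\|\cdot\|\cL^*(\gamma)h\|$, which yields \eqref{s6-c1-1} with $C:=\|(\cL^*(\gamma))^{-1}\|^{-1}$. For the reverse implication, assume $\gamma\in\Gamma l_2$ and that \eqref{s6-c1-1} holds. The estimate \eqref{s6-c1-1} says exactly that $\cL^*(\gamma)$ is bounded below; hence $\cL^*(\gamma)$ is injective and has closed range. Because $\cL^*(\gamma)$ is moreover bounded (it is a block of the contraction $\fEl{\cU}{\fEl{\gH}{\mu}}$), bounded invertibility will follow as soon as the range of $\cL^*(\gamma)$ is shown to be dense, i.e. that $\cL^*(\gamma)$ is surjective.

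To settle the density of the range I would translate the matrix statement back into operator language. By Theorem~\ref{5-thm4} and Remark~\ref{5-rem8}, $\cL^*(\gamma)$ is the matrix, relative to the orthonormal bases in \eqref{5-19}, of the operator $\mkEl{B}{\mu}{\gG}:\mkEl{\gH}{\mu}{\gF}\to\mkEl{\orth{\gH}}{\mu}{\gG}$ given by $\mkEl{B}{\mu}{\gG}h=\fEl{P}{\mkEl{\orth{\gH}}{\mu}{\gG}}h$. A short orthogonality computation then shows that the orthogonal complement of $\Range{\mkEl{B}{\mu}{\gG}}$ inside $\mkEl{\orth{\gH}}{\mu}{\gG}$ equals $\mkEl{\orth{\gH}}{\mu}{\gF}\cap\mkEl{\orth{\gH}}{\mu}{\gG}=(\mkEl{\gH}{\mu}{\gF}\vee\mkEl{\gH}{\mu}{\gG})^\perp$. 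At this point the decisive input is the simplicity of the unitary colligation $\fEl{\Delta}{\mu}$: relation \eqref{Nr.d1.5d} gives $\fEl{\gH}{\mu}=\mkEl{\gH}{\mu}{\gF}\vee\mkEl{\gH}{\mu}{\gG}$, so that this intersection reduces to $\gklam{0}$ and the range of $\mkEl{B}{\mu}{\gG}$ is dense. Together with the closed range produced by \eqref{s6-c1-1}, this makes $\cL^*(\gamma)$ surjective, hence boundedly invertible, and Theorem~\ref{s6-t1} then yields that $\mu$ is a Helson-Szeg\H o measure.

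I expect the genuine obstacle to be precisely this density-of-range step, because the lower bound \eqref{s6-c1-1} on its own only delivers injectivity together with closed range and never surjectivity; the missing surjectivity must come from outside the estimate, and simplicity of $\fEl{\Delta}{\mu}$ is what supplies it. The one point requiring care is to carry the basis identifications of Theorem~\ref{5-thm4} faithfully through the orthogonality computation, so that the complement of $\Range{\mkEl{B}{\mu}{\gG}}$ is correctly identified with $(\mkEl{\gH}{\mu}{\gF}\vee\mkEl{\gH}{\mu}{\gG})^\perp$ before simplicity is invoked.
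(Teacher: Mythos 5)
Your proposal is correct, and it follows the same overall reduction as the paper: one direction is the trivial lower bound coming from a bounded inverse, and the other direction combines bounded-from-belowness (injectivity plus closed range) with density of the range and then invokes Theorem~\ref{s6-t1}. The only place where you genuinely diverge is the density-of-range step. The paper stays entirely on the matrix side: since $\gamma\in\Gamma l_2$, the matrix $\cL(\gamma)$ in \eqref{5-thm7-1} is lower triangular with strictly positive diagonal entries $\Pi_k$, so $\ker\cL(\gamma)=\{0\}$ by back-substitution, and hence $\overline{\Ran\cL^*(\gamma)}=\bigl(\ker\cL(\gamma)\bigr)^\perp=l_2$. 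You instead translate $\cL^*(\gamma)$ back into the operator $B_{\mu,\mathfrak G}=P_{\gH_{\mu,\mathfrak G}^\perp}\big|_{\gH_{\mu,\mathfrak F}}$ via Theorem~\ref{5-thm4}, compute that the orthogonal complement of its range in $\gH_{\mu,\mathfrak G}^\perp$ is $\gH_{\mu,\mathfrak F}^\perp\cap\gH_{\mu,\mathfrak G}^\perp=(\gH_{\mu,\mathfrak F}\vee\gH_{\mu,\mathfrak G})^\perp$, and kill it using the simplicity of $\Delta_\mu$ (equation \eqref{Nr.d1.5d}); that computation is sound, since for $g\in\gH_{\mu,\mathfrak G}^\perp$ one has $\langle g,P_{\gH_{\mu,\mathfrak G}^\perp}h\rangle=\langle g,h\rangle$. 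What each approach buys: the paper's argument is shorter and self-contained at the level of the explicit formula \eqref{5-thm7-1}, while yours is structural --- it identifies simplicity of the colligation as the conceptual reason the range is dense and would work even without the explicit triangular representation, at the cost of carrying the basis identifications of Theorem~\ref{5-thm4} and the nontriviality of $\gH_{\mu,\mathfrak F}^\perp$ (Proposition~\ref{s2-p1}, available since $\gamma\in\Gamma l_2$ makes $\mu$ a Szeg\H o measure) through the argument.
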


\begin{proof}
First suppose that $\gamma\in\Gamma l_2$ and that there exists some positive constant $C$ such that for each $h\in l_2$ the inequality \eqref{s6-c1-1} is satisfied. From the shape \eqref{5-thm7-1} of the operator $\cL(\gamma)$ it follows 
immediately that 
$\ker\cL(\gamma)=\{0\}$. Thus, $\overline{\Ran\cL^*(\gamma)}=l_2$. From \eqref{s6-c1-1} it follows that
the operator $\cL^*(\gamma)$ is invertible and that the corresponding inverse operator $\big(\cL^*(\gamma)\big)^{-1}$ is bounded and satisfies
$$ \big\|\big(\cL^*(\gamma)\big)^{-1}\big\|\le\frac1C $$
where $C$ is taken from \eqref{s6-c1-1}. Since $\cL^*(\gamma)$ is a bounded linear operator, the operator $[\cL^*(\gamma)]^{-1}$ is closed. Thus $\Ran\cL^*(\gamma)=l_2$ and, consequently, the operator $\cL^*(\gamma)$ is boundedly invertible. 
Hence, Theorem \ref{s6-t1} yields that $\mu$ is a Helson-Szeg\H o measure. 
If $\mu$ is a Helson-Szeg\H o measure, then Theorem \ref{s6-t1} yields that $\cL^*(\gamma)$ is boundedly invertible.
Hence, condition \eqref{s6-c1-1} is trivially satisfied.
\end{proof}

In order to derive criteria in another way we need some statements on the operator $\cL(\gamma)$ which were obtained in \cite{7}.

The following result which originates from \cite[Theorem 3.12 and Corollary 3.13]{7} plays an important role in the study of the matrix $\cL(\gamma)$. Namely, it describes the multiplicative structure of $\cL(\gamma)$ and indicates connections to the backward shift.

\begin{thm}       \label{s6-2A}
  It holds that
  \begin{equation}       \label{s6-2A-1A}
    \cL(\gamma) = \gM(\gamma) \cdot \cL(W \gamma)
  \end{equation}
  where
  \begin{equation}       \label{s6-2A-1B}
    \gM(\gamma) :=
    \rklam{
    \begin{smallmatrix}
      D_{\gamma_1} & 0 & 0 & \cdots & 0 & \cdots\\
      -\gamma_1\overline\gamma_2 & D_{\gamma_2} & 0 & \cdots & 0 & \cdots \\
      -\gamma_1D_{\gamma_2}\overline\gamma_3 & -\gamma_2\overline\gamma_3 & D_{\gamma_3} & \ldots & 0 & \cdots \\
                \vdots & \vdots & \vdots & & \vdots \\
      -\gamma_1\left( \prod_{j=2}^{n-1}D_{\gamma_j}\right)\overline\gamma_n & -\gamma_2\left( \prod_{j=3}^{n-1}D_{\gamma_j}\right)\overline\gamma_n & -\gamma_3\left( \prod_{j=4}^{n-1}D_{\gamma_j}\right)\overline\gamma_n & \cdots & D_{\gamma_n} & \cdots \\
      \vdots & \vdots & \vdots &  & \vdots & 
    \end{smallmatrix}
    }
  \end{equation}
  and $D_{\gamma_j} := \sqrt{ 1 - \lvert \gamma_j \rvert^2 }, \, j \in \N_0$.
  The matrix $\gM(\gamma)$ satisfies
  \begin{equation}       \label{s6-2A-1C}
    I - \gM(\gamma) \gM^*(\gamma) = \eta(\gamma) \eta^*(\gamma)
  \end{equation}
  where
  \begin{equation}       \label{s6-2A-1D}
    \eta(\gamma)
    := {\rm col} \left(
        \overline{ \gamma_1 } ,  \, 
        \overline{ \gamma_2 } D_{\gamma_1} ,  \, 
        \ldots ,
        \overline{ \gamma_n } \prod_{j = 1}^{n - 1}{ D_{\gamma_j} } ,  \, 
        \ldots
       \right)
  \end{equation}

\end{thm}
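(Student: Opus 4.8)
The plan is to prove the three assertions in turn, treating the multiplicative identity \eqref{s6-2A-1A} as the core and obtaining the explicit shape \eqref{s6-2A-1B} and the defect relation \eqref{s6-2A-1C}--\eqref{s6-2A-1D} alongside or as consequences of it. Throughout one works with the explicit triangular form \eqref{5-thm7-1} of $\cL(\gamma)$, whose $(m,k)$-entry is $\Pi_m L_{m-k}(W^k\gamma)$ for $m>k$ and $\Pi_m$ on the diagonal. Since $\mu$ is a Szeg\H{o} measure we have $\gamma\in\Gamma l_2$, so the series defining the $L_n$ converge absolutely and every matrix below represents a bounded operator on $l_2$; hence the formal manipulations are genuine operator identities.

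For \eqref{s6-2A-1A} I would verify the identity entrywise. The key lemma is a recursion for the coefficients $L_n$, extracted from the defining multiple sum \eqref{LnGamma} by isolating, in the outermost summation, the terms in which the index $j_1$ attains its minimal value (equivalently, the terms carrying the leading conjugate factor) and reindexing the remainder as a shifted copy; concretely this yields $L_n(\gamma)=L_n(W\gamma)-\overline{\gamma_n}\sum_{p=0}^{n-1}\gamma_p L_p(\gamma)$, and likewise with $\gamma$ replaced by any shift $W^k\gamma$. Granting this, one writes out $(\gM(\gamma)\cL(W\gamma))_{m,k}$ and uses the elementary relations $\Pi_k=D_{\gamma_k}\Pi_{k+1}$, $\Pi_{k+1}(\gamma)=(\prod_{j=k+1}^{m-1}D_{\gamma_j})\Pi_m$ (for $k<m$), $\Pi_k(W\gamma)=\Pi_{k+1}(\gamma)$ and $L_j(W^k(W\gamma))=L_j(W^{k+1}\gamma)$ to factor the common scalar $\Pi_m$ out of row $m$; the surviving sum then reduces, via the recursion, to $\Pi_m L_{m-k}(W^k\gamma)$, which is precisely the $(m,k)$-entry of $\cL(\gamma)$ in \eqref{5-thm7-1}. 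The explicit triangular shape \eqref{s6-2A-1B} of $\gM(\gamma)$ is produced by this same computation; equivalently, since $\cL(W\gamma)$ is lower triangular with strictly positive diagonal $\Pi_k$, it admits a formal triangular inverse, and $\gM(\gamma):=\cL(\gamma)\cL(W\gamma)^{-1}$ is then forced to take the stated form.

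The defect identity \eqref{s6-2A-1C}--\eqref{s6-2A-1D} I would establish by a direct computation of $\gM(\gamma)\gM^*(\gamma)$ against $I-\eta(\gamma)\eta^*(\gamma)$. Because $\gM(\gamma)$ is lower triangular, $(\gM\gM^*)_{m,k}$ is a finite sum; after factoring out $\overline{\gamma_m}\gamma_k$ (for $m>k$) or inspecting the diagonal ($m=k$), the whole matter collapses to the single telescoping identity
\[
\sum_{l=1}^{k-1}|\gamma_l|^2\prod_{j=l+1}^{k-1}D_{\gamma_j}^2+\prod_{j=1}^{k-1}D_{\gamma_j}^2=1,
\]
which follows at once from $|\gamma_l|^2=1-D_{\gamma_l}^2$ together with telescoping of the partial products $\prod_{j=l}^{k-1}D_{\gamma_j}^2$. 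Matching the resulting expressions with the entries $\eta_m=\overline{\gamma_m}\prod_{j=1}^{m-1}D_{\gamma_j}$ of $\eta(\gamma)$ then yields \eqref{s6-2A-1C} both on and off the diagonal.

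I expect the genuine obstacle to lie in the combinatorial step of the second paragraph: correctly isolating the extremal-index terms in the nested sums \eqref{LnGamma} and keeping track of the constraints $j_i\ge j_{i-1}-s_i$ while reindexing, so that the leftover really is the shifted quantity with the predicted coefficients. By contrast, the $\Pi$-bookkeeping in \eqref{s6-2A-1A} and the telescoping behind \eqref{s6-2A-1C} are routine once the recursion is in hand. A conceptual cross-check, which also explains \emph{why} such a factorization should exist, is that $\cL(\gamma)$ is the conjugation block of the canonical model associated with $\mu$, while $\cL(W\gamma)$ is the corresponding block after one step of the Schur algorithm; the factor $\gM(\gamma)$ encodes exactly that single Schur step, and \eqref{s6-2A-1C} records that this step is a coisometry up to the rank-one defect carried through $\eta(\gamma)$ by the leading parameter $\gamma_1$.
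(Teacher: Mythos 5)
The paper does not actually prove Theorem~\ref{s6-2A}: it is imported verbatim from \cite[Theorem 3.12 and Corollary 3.13]{7}, where it is obtained inside the operator-theoretic framework ($\cL(\gamma)$ being a block of the unitary matrix \eqref{5-20}, the factor $\gM(\gamma)$ encoding one step of the Schur algorithm, much as your closing heuristic suggests). So your direct, entrywise verification is a genuinely different and self-contained route, and its two pillars are sound. The recursion $L_n(\gamma)=L_n(W\gamma)-\overline{\gamma_n}\sum_{p=0}^{n-1}\gamma_pL_p(\gamma)$ is correct and comes out exactly as you describe: after reindexing, the summation regions for $L_n(\gamma)$ and $L_n(W\gamma)$ in \eqref{LnGamma} differ only in the constraint on $j_1$ ($j_1\ge n-s_1$ versus $j_1\ge n-s_1+1$), the boundary slice $j_1=n-s_1$ carries the factor $\gamma_{n-s_1}\overline{\gamma_n}$, and with $p:=n-s_1$ the inner $(r-1)$-fold sums reassemble into $-L_p(\gamma)$. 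The telescoping identity and the off-diagonal bookkeeping behind \eqref{s6-2A-1C} are likewise correct.

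One step in the verification of \eqref{s6-2A-1A} is glossed over and must be made explicit. Writing out the product and extracting $\Pi_m$ from row $m$ leaves
\[
L_{m-k}(W^{k+1}\gamma)-\overline{\gamma_m}\sum_{l=k}^{m-1}\gamma_l\Big(\prod_{j=l+1}^{m-1}D_{\gamma_j}^2\Big)L_{l-k}(W^{k+1}\gamma),
\]
whereas your recursion, applied to $W^k\gamma$, gives $L_{m-k}(W^k\gamma)=L_{m-k}(W^{k+1}\gamma)-\overline{\gamma_m}\sum_{l=k}^{m-1}\gamma_lL_{l-k}(W^k\gamma)$. The two correction sums differ both in the shift of the argument of $L$ and in the presence of the weights $\prod_j D_{\gamma_j}^2$, so the recursion does not reduce the surviving sum in one stroke. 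Their equality is true, but it is a separate lemma, proved by induction on $m-k$: incrementing $m$ multiplies every term of the weighted sum by $D_{\gamma_{m-1}}^2=1-|\gamma_{m-1}|^2$ and appends one new term, and exactly the recursion converts this into the increment of the unweighted sum. With that lemma inserted, the entrywise proof of \eqref{s6-2A-1A} closes and the rest of your argument stands as written.
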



Let $\gamma\in\Gamma l_2$. For each $n\in\N$ we set (see formula (5.3) in \cite{7})
\begin{equation}\label{frakLn}
\fL_n(\gamma):=\begin{pmatrix}
 \Pi_1 & 0 & 0 & \ldots & 0 \\
 \Pi_2L_1(W\gamma) & \Pi_2 & 0 & \ldots & 0 \\
\Pi_3L_2(W\gamma) & \Pi_3L_1(W^2\gamma) & \Pi_3 & \ldots & 0 \\
          \vdots & \vdots & \vdots & & \vdots \\
\Pi_nL_{n-1}(W\gamma) & \Pi_nL_{n-2}(W^2\gamma) & \Pi_nL_{n-3}(W^3\gamma) & \ldots & \Pi_n
\end{pmatrix}.
\end{equation}
The matrices introduced in (\ref{frakLn}) will play an important role in our investigations. Now we turn our attention to some properties of the matrices $\fL_n(\gamma)$, $n\in\N $, which will later be of use. From Corollary 5.2 in \cite{7} we get the following result.

\begin{lem}\label{lem11a}
Let $\gamma=(\gamma_j)_{j=0}^\infty\in\Gamma l_2$ and let $n\in\N$. Then the matrix $\fL_n(\gamma)$ defined by (\ref{frakLn}) is contractive.
\end{lem}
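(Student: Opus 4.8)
The plan is to realize $\fL_n(\gamma)$ as a finite \emph{corner} of an operator that is manifestly a contraction, so that contractivity is inherited by truncation. By Theorem~\ref{5-thm7} the infinite matrix $\cL(\gamma)$ coincides with the upper--right block $\cL$ of the matrix \eqref{5-20}, and comparing the explicit shapes \eqref{frakLn} and \eqref{5-thm7-1} shows that $\fL_n(\gamma)$ is precisely the leading principal $n\times n$ submatrix of $\cL(\gamma)$. It therefore suffices to establish two things: (i) $\cL(\gamma)$ is a contraction on $l_2$, and (ii) the leading principal $n\times n$ truncation of a contraction is again a contraction.

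For step (i) I would exploit that the block matrix \eqref{5-20} represents the operator $\cU_{\gH_\mu}$. Since $\cU_\mu$ carries the canonical orthonormal basis \eqref{Nr.3.16} of $L_\mu^2$ onto the conjugate canonical orthonormal basis \eqref{Nr.3.31} and fixes the constant function, it is unitary on $L_\mu^2$ and leaves $\gH_\mu$ invariant; hence $\cU_{\gH_\mu}$ is unitary on $\gH_\mu$, and the block matrix in \eqref{5-20} is unitary with respect to the basis \eqref{Nr.3.16b}. Applying the resulting isometry to a vector of $\gH_\mu$ whose $\varphi$-component vanishes and whose $\psi$-component equals $w$ yields $\|\cL w\|^2+\|\cQ w\|^2=\|w\|^2$, whence $\|\cL w\|\le\|w\|$ for all $w$. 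Thus $\cL=\cL(\gamma)$ is a contraction.

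For step (ii), let $P_n$ denote the orthogonal projection of $l_2$ onto the span of the first $n$ coordinate vectors, identified with $\C^n$. The leading principal block satisfies $\fL_n(\gamma)=P_n\,\cL(\gamma)\,P_n$ restricted to $\C^n$, so for every $v\in\C^n$ one has $\|\fL_n(\gamma)v\|=\|P_n\cL(\gamma)v\|\le\|\cL(\gamma)v\|\le\|v\|$, using that $P_n$ is a projection and $\cL(\gamma)$ a contraction. This gives $\|\fL_n(\gamma)\|\le1$, i.e.\ $\fL_n(\gamma)$ is contractive.

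The argument is short, and the only point requiring care is the bookkeeping of the two identifications—that \eqref{5-20} represents a genuinely unitary operator and that \eqref{frakLn} is exactly the top-left $n\times n$ corner of \eqref{5-thm7-1}; once these are in place the analytic content is minimal. As an alternative that avoids the passage through $\cU_{\gH_\mu}$, one can use the multiplicative structure of Theorem~\ref{s6-2A}: since both factors in \eqref{s6-2A-1A} are lower triangular, their leading $n\times n$ blocks multiply, giving $\fL_n(\gamma)=\gM_n(\gamma)\,\fL_n(W\gamma)$, where $\gM_n(\gamma)$ is the leading block of $\gM(\gamma)$. Lower triangularity makes the top-left $n\times n$ corner of $I-\gM(\gamma)\gM^*(\gamma)=\eta(\gamma)\eta^*(\gamma)$ equal to $I_n-\gM_n(\gamma)\gM_n^*(\gamma)\ge0$ by \eqref{s6-2A-1C}, so each $\gM_n(W^k\gamma)$ is a contraction; iterating and letting the number of factors tend to infinity (so that $\fL_n(W^k\gamma)\to I_n$ as $W^k\gamma\to0$ in $l_2$) exhibits $\fL_n(\gamma)$ as a limit of products of contractions, hence a contraction.
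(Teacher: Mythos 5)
Your argument is correct. The paper itself does not prove this lemma --- it simply imports it as \cite[Corollary 5.2]{7} --- so there is no internal proof to compare against; but your primary route is exactly the mechanism the paper invokes later anyway (in the proof of Theorem~\ref{6-thm13}) to conclude $\|\cL(\gamma)\|\le1$, namely that $\cL(\gamma)$ is a block of the unitary matrix \eqref{5-20}, and your step (ii) (a compression $P_n\cL P_n$ of a contraction is a contraction) is standard and sound. One small piece of bookkeeping you should make explicit: Lemma~\ref{lem11a} is stated for an arbitrary $\gamma\in\Gamma l_2$, whereas Theorem~\ref{5-thm7} and the unitarity of \eqref{5-20} are formulated for the Schur parameter sequence of a Szeg\H o measure; to close the gap, note that by property (a) of the Schur algorithm every $\gamma\in\Gamma l_2$ lies in $\Gamma$, hence is the Schur parameter sequence of some $\mu\in\cM_+^1(\T)$, which is Szeg\H o by Proposition~\ref{5-prop5} --- only then is $\cL(\gamma)$ realized as the block $\cL$ of \eqref{5-20}. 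Your alternative multiplicative argument is also valid and is closer in spirit to the material the paper does develop: it is Corollary~\ref{cor14} combined with the positivity $I_n-\gM_n(\gamma)\gM_n^*(\gamma)=\eta_n(\gamma)\eta_n^*(\gamma)\ge0$ from Lemma~\ref{lem13} and the limit $\fL_n(W^m\gamma)\to I_n$ from Lemma~\ref{lem12neu}(c); its advantage is that it works entirely at the level of the sequence $\gamma$ and the finite matrices, without passing through the measure, the colligation, or the operator $\cU_{\gH_\mu}$, so it is the more self-contained of your two proofs.
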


We continue with some asymptotical considerations.

\begin{lem}\label{lem12neu}
Let $\gamma=(\gamma_j)_{j=0}^\infty\in\Gamma l_2$. Then:
\begin{itemize}
 \item[(a)] $\lim_{k\rightarrow\infty}\Pi_k=1$.
\item[(b)] For each $j\in\N$,\: $\lim_{m\rightarrow\infty}L_j(W^m\gamma)=0$.
\item[(c)] For each $n\in\N$,\: $\lim_{m\rightarrow\infty}\fL_n(W^m\gamma)=I_n$.
\end{itemize}
\end{lem}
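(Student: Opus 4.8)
The plan is to prove the three assertions in order, deriving (c) from (a) and (b). For part (a) I would use that $\gamma\in l_2$ forces the infinite product defining $\Pi_0$ to converge to a positive number. Writing $\ln\Pi_k=\tfrac12\sum_{j=k}^\infty\ln(1-|\gamma_j|^2)$ and noting that the full series $\sum_{j=0}^\infty\ln(1-|\gamma_j|^2)$ converges absolutely (since $|\gamma_j|^2\to0$ and $\ln(1-x)\sim-x$), the tail $\sum_{j\ge k}\ln(1-|\gamma_j|^2)$ tends to $0$ as $k\to\infty$, whence $\Pi_k\to1$.

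For part (b) the key observation is that $L_n(W^m\gamma)=L_n(\beta)$, where $\beta:=W^m\gamma$ is the shifted sequence with entries $\beta_i=\gamma_{i+m}$, so that $\|\beta\|_{l_2}^2=\sum_{i\ge m}|\gamma_i|^2\to0$ as $m\to\infty$. Thus it suffices to bound $|L_n(\beta)|$ by a quantity tending to $0$ with $\|\beta\|_{l_2}$. Passing to absolute values in \eqref{LnGamma} and relaxing every lower summation limit to $j_t\ge0$ (which can only enlarge the index set and hence increase the nonnegative sum) decouples the summation indices, so that the block for a fixed $r$ and a fixed composition $(s_1,\ldots,s_r)$ factors as $\prod_{t=1}^r\big(\sum_{j\ge0}|\beta_j|\,|\beta_{j+s_t}|\big)$. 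By Cauchy--Schwarz each factor is at most $\|\beta\|_{l_2}^2$; there are $\binom{n-1}{r-1}$ compositions of $n$ into $r$ positive parts, and summing over $r$ gives, with $x:=\|\beta\|_{l_2}^2$,
\[
  |L_n(\beta)|\le\sum_{r=1}^n\binom{n-1}{r-1}x^r=x(1+x)^{n-1}.
\]
Letting $m\to\infty$ makes $x\to0$, so for each fixed $j\in\N$ we obtain $L_j(W^m\gamma)\to0$.

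For part (c) I would first record the two shift identities $\Pi_k(W^m\gamma)=\Pi_{k+m}(\gamma)$ and $L_{i-k}\big(W^k(W^m\gamma)\big)=L_{i-k}(W^{k+m}\gamma)$, together with the observation that the $(i,k)$-entry of $\fL_n(\gamma)$ equals $\Pi_i\,L_{i-k}(W^k\gamma)$ for $k\le i$ and vanishes for $k>i$ (using $L_0=1$ on the diagonal). Substituting $W^m\gamma$, the $(i,k)$-entry of $\fL_n(W^m\gamma)$ becomes $\Pi_{i+m}(\gamma)\,L_{i-k}(W^{k+m}\gamma)$. On the diagonal this is $\Pi_{i+m}\to1$ by (a); strictly below the diagonal, $\Pi_{i+m}$ stays bounded by $1$ while $L_{i-k}(W^{k+m}\gamma)\to0$ by (b) (here $i-k\ge1$ is fixed and $k+m\to\infty$), so the entry tends to $0$. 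Since these are finite $n\times n$ matrices, entrywise convergence gives $\fL_n(W^m\gamma)\to I_n$.

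The main obstacle is part (b): one must control $L_j$ uniformly along the whole family $(W^m\gamma)_{m\in\N}$ rather than for a single sequence, and the combinatorial multi-sum in \eqref{LnGamma} looks unwieldy. The decoupling-plus-Cauchy--Schwarz estimate above is what tames it, reducing everything to the single scalar $\|W^m\gamma\|_{l_2}^2$, whose decay is immediate from $\gamma\in l_2$.
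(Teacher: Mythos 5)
Your proof is correct and follows the same route as the paper, which merely asserts (a) from the convergence of the infinite product, calls (b) an immediate consequence of the definition, and obtains (c) from (a) and (b) by entrywise inspection. Your explicit estimate $|L_n(\beta)|\le x(1+x)^{n-1}$ with $x=\|W^m\gamma\|_{l_2}^2$, obtained by relaxing the coupled lower summation limits to $0$ and applying Cauchy--Schwarz to each of the $\binom{n-1}{r-1}$ compositions, is a valid and welcome quantification of the step the paper leaves implicit.
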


\begin{proof}
The choice of $\gamma$ implies the convergence of the infinite product $\prod_{k=0}^\infty D_{\gamma_k}$. This yields (a). Assertion (b) is an immediate consequence of the definition of the sequence $(L_j(W^m\gamma))_{m=1}^\infty$ (see (\ref{LnGamma}) and (\ref{coshift})). By inspection of the sequence\\
$(\fL_n(W^m\gamma))_{m=1}^\infty$ one can immediately see that the combination of (a) and (b) yields the assertion of (c).
\end{proof}


The following result is given in \cite[Lemma 5.3]{7}.

\begin{lem}\label{lem13}
Let $\gamma=(\gamma_j)_{j=0}^\infty\in\Gamma l_2$ and let $n\in\N$. Then
\begin{equation}\label{LnProd}
\fL_n(\gamma)=\gM_n(\gamma)\cdot\fL_n(W\gamma),
\end{equation}
where
\begin{eqnarray}\label{frakMn}
&&\hspace{-0.7cm}\gM_n(\gamma):=\nonumber\\
&&\hspace{-0.7cm}\begin{pmatrix}
 D_{\gamma_1} & 0 & 0 & \ldots & 0 \\
 -\gamma_1\overline\gamma_2 & D_{\gamma_2} & 0 & \ldots & 0 \\
-\gamma_1D_{\gamma_2}\overline\gamma_3 & -\gamma_2\overline\gamma_3 & D_{\gamma_3} & \ldots & 0 \\
          \vdots & \vdots & \vdots & & \vdots \\
-\gamma_1\left(\prod_{j=2}^{n-1}D_{\gamma_j}\right)\overline\gamma_n & -\gamma_2\left(\prod_{j=3}^{n-1}D_{\gamma_j}\right)\overline\gamma_n & -\gamma_3\left(\prod_{j=4}^{n-1}D_{\gamma_j}\right)\overline\gamma_n & \ldots & D_{\gamma_n}
\end{pmatrix}.\nonumber\\
\end{eqnarray}
Moreover, $\gM_n(\gamma)$ is a nonsingular matrix which fulfills
\begin{equation}\label{MnForm}
I_n-\gM_n(\gamma)\gM_n^*(\gamma)=\eta_n(\gamma)\eta_n^*(\gamma),
\end{equation}
where
\begin{equation}\label{eta}
  \eta_n(\gamma)
   :=\left(\overline{\gamma_1},\overline{\gamma_2}D_{\gamma_1},\ldots,\overline{\gamma_n}
           \Bigg(\prod_{j=1}^{n-1}D_{\gamma_j}\Bigg)\right)^T.
\end{equation}
\end{lem}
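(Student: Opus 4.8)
The plan is to obtain both assertions by truncating the infinite-dimensional identities supplied by Theorem \ref{s6-2A}. First I would record the structural observation that the finite matrices introduced here are nothing but the leading $n\times n$ corners of the infinite objects appearing in Theorem \ref{s6-2A}: comparing \eqref{5-thm7-1} with \eqref{frakLn}, \eqref{s6-2A-1B} with \eqref{frakMn}, and \eqref{s6-2A-1D} with \eqref{eta}, one sees that $\fL_n(\gamma)$, $\gM_n(\gamma)$, and $\eta_n(\gamma)$ are precisely the truncations to the first $n$ coordinates of $\cL(\gamma)$, $\gM(\gamma)$, and $\eta(\gamma)$, respectively. The same holds for $\fL_n(W\gamma)$ relative to $\cL(W\gamma)$.

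For the factorization \eqref{LnProd}, the key elementary fact is that truncation to the leading $n\times n$ block commutes with multiplication on the class of infinite lower-triangular matrices. Indeed, both $\cL(\gamma)$ and $\gM(\gamma)$ are lower triangular, and for lower-triangular $A$ and $B$ the entry $(AB)_{ij}$ with $1\le i,j\le n$ equals $\sum_k A_{ik}B_{kj}$, where a summand can be nonzero only if $j\le k\le i\le n$; hence $(AB)_{ij}$ depends solely on the leading $n\times n$ corners of $A$ and $B$. Applying this to \eqref{s6-2A-1A} and reading off the leading $n\times n$ block immediately yields $\fL_n(\gamma)=\gM_n(\gamma)\fL_n(W\gamma)$.

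The defect identity \eqref{MnForm} is the step that needs slightly more care, because $\gM^*(\gamma)$ is upper triangular, so the product $\gM(\gamma)\gM^*(\gamma)$ is no longer triangular and the previous remark does not apply verbatim. The point to make is that the lower-triangularity of $\gM(\gamma)$ alone already confines the summation: for $1\le i,j\le n$ one has $(\gM\gM^*)_{ij}=\sum_k \gM_{ik}\overline{\gM_{jk}}$ with nonzero summands only for $k\le\min(i,j)\le n$, so this entry coincides with $(\gM_n\gM_n^*)_{ij}$. Since the leading corner of $I$ is $I_n$ and the corner of the rank-one matrix $\eta\eta^*$ is $\eta_n\eta_n^*$, truncating \eqref{s6-2A-1C} to its leading block produces exactly $I_n-\gM_n(\gamma)\gM_n^*(\gamma)=\eta_n(\gamma)\eta_n^*(\gamma)$.

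Finally, nonsingularity of $\gM_n(\gamma)$ is immediate from its explicit triangular shape: $\gM_n(\gamma)$ is lower triangular with diagonal entries $D_{\gamma_1},\dots,D_{\gamma_n}$, and since $\gamma\in\Gamma l_2$ forces $\gamma_j\in\D$, i.e. $|\gamma_j|<1$, each defect $D_{\gamma_j}=\sqrt{1-|\gamma_j|^2}$ is strictly positive. Hence $\det\gM_n(\gamma)=\prod_{j=1}^n D_{\gamma_j}>0$, so $\gM_n(\gamma)$ is invertible. The only genuine subtlety in the whole argument is the bookkeeping in the defect identity described above; everything else is a direct transcription of Theorem \ref{s6-2A} to finite sections.
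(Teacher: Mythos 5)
Your argument is correct. Note, however, that the paper does not actually prove Lemma \ref{lem13}: it imports it verbatim as \cite[Lemma 5.3]{7}, just as Theorem \ref{s6-2A} is imported from \cite[Theorem 3.12, Corollary 3.13]{7}, so there is no in-paper proof to compare against. What you supply is a clean derivation of the finite-section statement from the infinite-dimensional one, and all the steps check out: the identification of $\fL_n(\gamma)$, $\gM_n(\gamma)$, $\eta_n(\gamma)$ as the leading corners of $\cL(\gamma)$, $\gM(\gamma)$, $\eta(\gamma)$ is immediate from comparing \eqref{frakLn} with \eqref{5-thm7-1}, \eqref{frakMn} with \eqref{s6-2A-1B}, and \eqref{eta} with \eqref{s6-2A-1D}; the truncation of \eqref{s6-2A-1A} to \eqref{LnProd} is justified because lower triangularity makes every entry of the product a finite sum supported on indices $j\le k\le i\le n$; and you correctly isolate the one point needing care, namely that $\gM(\gamma)\gM^*(\gamma)$ is not triangular but its $(i,j)$ entry still only involves $k\le\min(i,j)$, so \eqref{s6-2A-1C} truncates to \eqref{MnForm}. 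The nonsingularity claim via $\det\gM_n(\gamma)=\prod_{j=1}^n D_{\gamma_j}>0$, using $\gamma_j\in\D$ from the definition of $\Gamma l_2$, is also correct. The one thing worth being explicit about is that your proof is only as self-contained as Theorem \ref{s6-2A} is: in the source \cite{7} the logical order is arguably the reverse (finite sections first, then the infinite operators), so if you wanted a proof from scratch you would instead verify \eqref{LnProd} and \eqref{MnForm} directly from the definitions \eqref{LnGamma}, \eqref{Pik}; but as a deduction within the framework this paper sets up, your route is sound and is the natural one.
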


\begin{cor}\label{cor14}
Let $\gamma=(\gamma_j)_{j=0}^\infty\in\Gamma l_2$ and let $n\in\N$. Then the multiplicative decomposition
\begin{equation}\label{LnDecomp}
\fL_n(\gamma)=\stackrel{\longrightarrow}{\prod_{k=0}^{\infty}}\gM_n(W^k\gamma)
\end{equation}
holds true.
\end{cor}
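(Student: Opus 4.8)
The plan is to iterate the one-step factorization of Lemma~\ref{lem13} and then pass to the limit, using the asymptotic normalization from Lemma~\ref{lem12neu} to kill the tail. Fix $\gamma\in\Gamma l_2$ and $n\in\N$, and abbreviate the left-to-right ordered partial products
\[
  P_m:=\gM_n(\gamma)\,\gM_n(W\gamma)\cdots\gM_n(W^{m-1}\gamma),\qquad m\in\N.
\]
Since $W^k\gamma\in\Gamma l_2$ for every $k\in\N_0$, the factorization \eqref{LnProd} applies to each shifted sequence and gives $\fL_n(W^k\gamma)=\gM_n(W^k\gamma)\,\fL_n(W^{k+1}\gamma)$. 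Substituting these identities into one another, I would obtain for every $m\in\N$ the exact finite factorization
\[
  \fL_n(\gamma)=P_m\,\fL_n(W^m\gamma).
\]
This identity is the algebraic backbone; everything that remains is analytic.

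Next I would record that each factor is a contraction. From \eqref{MnForm} applied to $W^k\gamma$ we have $I_n-\gM_n(W^k\gamma)\gM_n^*(W^k\gamma)=\eta_n(W^k\gamma)\eta_n^*(W^k\gamma)\ge0$, so $\|\gM_n(W^k\gamma)\|\le1$ and hence $\|P_m\|\le1$ uniformly in $m$. (The same uniform bound can alternatively be extracted from Lemma~\ref{lem11a} together with the invertibility of the lower-triangular matrices $\fL_n(W^m\gamma)$, whose diagonal entries $\prod_{j\ge i+m}D_{\gamma_j}$ are strictly positive for $\gamma\in\Gamma l_2$, via $P_m=\fL_n(\gamma)[\fL_n(W^m\gamma)]^{-1}$; the contractivity route is cleaner and avoids inverses.)

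Finally I would subtract and estimate. The factorization yields
\[
  P_m-\fL_n(\gamma)=P_m\bigl(I_n-\fL_n(W^m\gamma)\bigr),
\]
so that $\|P_m-\fL_n(\gamma)\|\le\|P_m\|\,\|I_n-\fL_n(W^m\gamma)\|\le\|I_n-\fL_n(W^m\gamma)\|$. By Lemma~\ref{lem12neu}(c) the right-hand side tends to $0$ as $m\to\infty$, hence $P_m\to\fL_n(\gamma)$. As the ordered infinite product in \eqref{LnDecomp} is by definition $\lim_{m\to\infty}P_m$, this is precisely the claimed decomposition.

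There is no deep obstacle here: the sole nontrivial input is the convergence $\fL_n(W^m\gamma)\to I_n$ supplied by Lemma~\ref{lem12neu}(c), and once this is combined with the uniform contractive bound the limit follows immediately. The only points demanding care are bookkeeping ones---verifying that the order of the factors in $P_m$ matches the left-to-right arrow convention of \eqref{LnDecomp}, and confirming that Lemma~\ref{lem13} and \eqref{MnForm} are legitimately applied to every shifted sequence $W^k\gamma$, which holds because $\Gamma l_2$ is invariant under the coshift $W$.
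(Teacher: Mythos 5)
Your proof is correct and follows essentially the same route as the paper, whose entire proof reads ``Combine part (c) of Lemma \ref{lem12neu} and (\ref{LnProd})''; you have simply made explicit the iteration $\fL_n(\gamma)=P_m\,\fL_n(W^m\gamma)$, the uniform contractivity $\|P_m\|\le1$ coming from \eqref{MnForm}, and the passage to the limit via Lemma \ref{lem12neu}(c). The ordering of the factors also matches the left-to-right convention of \eqref{LnDecomp}, so nothing is missing.
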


\begin{proof}
  Combine part (c) of Lemma \ref{lem12neu} and (\ref{LnProd}).
\end{proof}

Now we state the next main result of this paper.


For $h=(z_j)_{j=1}^\infty\in l_2$ and $n\in\N$ we set 
$$ h_n:=(z_1,\ldots,z_n)^\top\in\C^n .$$

\begin{thm}\label{s6-t2}
Let $\mu\in\cM_+^1(\T)$ and let $\gamma\in\Gamma$ be the sequence of Schur parameters associated with $\mu$.
Then $\mu$ is a Helson-Szeg\H o measure if and only if $\gamma\in\Gamma l_2$ and there exists some positive constant $C$
such that for all $h\in l_2$ the inequality
\begin{align}\label{s6-t2-2}
\lim\limits_{n\to\infty}\lim\limits_{m\to\infty}\left\|\Bigg(\stackrel\longleftarrow{\prod_{k=0}^m}\gM_n^*(W^k\gamma)\Bigg)h_n\right\|\ge C\|h\|
\end{align}
is satisfied.
\end{thm}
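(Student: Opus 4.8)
The plan is to show that, for every $h=(z_j)_{j=1}^\infty\in l_2$, the double limit on the left-hand side of \eqref{s6-t2-2} is nothing but $\norm{\cL^*(\gamma)h}$; once this is established, \eqref{s6-t2-2} becomes literally the inequality \eqref{s6-c1-1}, and the assertion follows from Corollary \ref{s6-c1} (together with Corollary \ref{s0-c1}, which yields the necessity of $\gamma\in\Gamma l_2$ when $\mu$ is a Helson-Szeg\H o measure).

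First I would compute the inner limit in $m$ for fixed $n$. Corollary \ref{cor14} provides the convergent ordered product $\fL_n(\gamma)=\stackrel{\longrightarrow}{\prod_{k=0}^\infty}\gM_n(W^k\gamma)$ in the $n\times n$ matrix norm, that is, $\fL_n(\gamma)=\lim_{m\to\infty}\gM_n(\gamma)\gM_n(W\gamma)\cdots\gM_n(W^m\gamma)$. Since passing to the adjoint is continuous on matrices and reverses the order of the factors, this yields $\lim_{m\to\infty}\stackrel\longleftarrow{\prod_{k=0}^m}\gM_n^*(W^k\gamma)=\fL_n^*(\gamma)$. Hence the inner limit in \eqref{s6-t2-2} exists and the left-hand side reduces to $\lim_{n\to\infty}\norm{\fL_n^*(\gamma)h_n}$.

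Next I would pass from the truncations $\fL_n(\gamma)$ to the operator $\cL(\gamma)$ itself. Comparing \eqref{frakLn} with \eqref{5-thm7-1}, the matrix $\fL_n(\gamma)$ is precisely the upper-left $n\times n$ corner of the lower-triangular matrix $\cL(\gamma)$; writing $P_n$ for the orthogonal projection of $l_2$ onto the span of its first $n$ coordinate vectors, this reads $\fL_n(\gamma)=P_n\cL(\gamma)P_n$ and thus $\fL_n^*(\gamma)=P_n\cL^*(\gamma)P_n$. Because $\cL^*(\gamma)$ is upper triangular, the vector $\cL^*(\gamma)h_n=\cL^*(\gamma)P_nh$ is already supported in the first $n$ coordinates, so that $\fL_n^*(\gamma)h_n=\cL^*(\gamma)h_n$ for every $n\in\N$. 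Recalling that $\cL(\gamma)$ is a block of the matrix \eqref{5-20} of the unitary operator $\cU_{\gH_\mu}$ and is therefore a contraction, $\cL^*(\gamma)$ is bounded; together with $h_n=P_nh\to h$ in $l_2$ this gives $\cL^*(\gamma)h_n\to\cL^*(\gamma)h$ and hence $\lim_{n\to\infty}\norm{\fL_n^*(\gamma)h_n}=\norm{\cL^*(\gamma)h}$. Consequently the left-hand side of \eqref{s6-t2-2} equals $\norm{\cL^*(\gamma)h}$, and \eqref{s6-t2-2} is equivalent to \eqref{s6-c1-1}; Corollary \ref{s6-c1} then finishes both implications.

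The genuinely substantive step, and the one I expect to need the most care, is the compression identity $\fL_n^*(\gamma)h_n=\cL^*(\gamma)h_n$ together with the transition to the limit $n\to\infty$. One has to carry out the triangular bookkeeping cleanly, verifying that no mass of $\cL^*(\gamma)h_n$ escapes beyond the $n$-th coordinate, and to justify the limit by the boundedness of $\cL^*(\gamma)$; only after this identification is in place is one entitled to assert that the double limit in \eqref{s6-t2-2} exists at all. The remaining ingredients (the adjoint of the ordered product from Corollary \ref{cor14}, and the reduction to Corollary \ref{s6-c1}) are then purely formal.
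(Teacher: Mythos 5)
Your proposal is correct and follows essentially the same route as the paper: the inner limit in $m$ is identified with $\fL_n^*(\gamma)h_n$ via the ordered-product decomposition of Corollary \ref{cor14}, the limit in $n$ is identified with $\norm{\cL^*(\gamma)h}$, and the statement is then reduced to Corollary \ref{s6-c1}. The paper's own proof is just a terser version of this argument; your explicit verification of the compression identity $\fL_n^*(\gamma)h_n=\cL^*(\gamma)h_n$ (using the triangularity of $\cL(\gamma)$) and of the passage to the limit supplies exactly the details the paper leaves implicit.
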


\begin{proof}
In view of \eqref{LnDecomp} and condition (c) in Lemma \ref{lem12neu} the condition \eqref{s6-t2-2}
is equivalent to the fact that for all $h\in l_2$ the inequality 
\begin{align}\label{s6-t2-3}
 \lim\limits_{n\to\infty}\|\cL_n^*(\gamma)h_n\|\ge C\|h\|
\end{align}
is satisfied. This inequality is equivalent to the inequality \eqref{s6-c1-1}.
\end{proof}


Theorem \ref{s6-t2} leads to an alternate proof of an interesting sufficient condition for a Szeg\H o measure to be a Helson-Szeg\H o measure
(see Theorem \ref{6-thm9}). To prove this result we will still need some preparations.

\begin{lem}\label{lem211}
Let $n\in\N$. Furthermore, let the nonsingular complex $n\times n$ matrix $\gM$ and the vector $\eta\in\C^n$ be chosen such that
\begin{equation}\label{lem211_36}
I_n-\gM\gM^*=\eta\eta^*
\end{equation}
holds. Then $1-\left\|\eta\right\|_{\C^n}^2>0$ and the vector
\begin{equation}\label{lem211_37}
\widetilde\eta:=\frac{1}{\sqrt{1-\left\|\eta\right\|_{\C^n}^2}}\,\gM^*\eta
\end{equation}
satisfies
\begin{equation}\label{lem211_38}
I_n-\gM^*\gM=\widetilde\eta\widetilde\eta^*.
\end{equation}
\end{lem}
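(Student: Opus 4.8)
The plan is to reduce the whole lemma to a single quadratic matrix identity satisfied by $\gM^*\gM$. First I would record that, since $\gM$ is nonsingular, $\gM\gM^*$ is positive definite: for every nonzero $x\in\C^n$ we have $x^*\gM\gM^*x=\|\gM^*x\|_{\C^n}^2>0$. Rewriting \eqref{lem211_36} as $\gM\gM^*=I_n-\eta\eta^*$ and evaluating this positive definite form at $x=\eta$ (the case $\eta=0$ being trivial) gives $\|\eta\|_{\C^n}^2\bigl(1-\|\eta\|_{\C^n}^2\bigr)>0$, whence $1-\|\eta\|_{\C^n}^2>0$. This settles the first assertion and makes the scalar $c:=1-\|\eta\|_{\C^n}^2$ a legitimate positive denominator in \eqref{lem211_37}.

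Next I would establish the quadratic relation. Put $B:=\gM\gM^*$ and $A:=\gM^*\gM$. Since $B-I_n=-\eta\eta^*$ and $\eta^*\eta=\|\eta\|_{\C^n}^2=1-c$, a direct computation gives
\[
(B-I_n)(B-cI_n)=(-\eta\eta^*)\bigl((1-c)I_n-\eta\eta^*\bigr)=-\|\eta\|_{\C^n}^2\,\eta\eta^*+\eta\eta^*\eta\eta^*=0,
\]
i.e.\ $B^2-(1+c)B+cI_n=0$. The key observation is then that $A$ and $B$ are similar: because $\gM$ is invertible, $A=\gM^*\gM=\gM^{-1}(\gM\gM^*)\gM=\gM^{-1}B\gM$, so $A$ annihilates the same polynomial, giving
\[
A^2-(1+c)A+cI_n=0 .
\]
I regard this transfer from $\gM\gM^*$ to $\gM^*\gM$ as the heart of the argument; the nonsingularity hypothesis on $\gM$ is exactly what makes it available. (Alternatively one may multiply the relation for $B$ by $\gM^*$ on the left and $\gM$ on the right, obtaining $A^3-(1+c)A^2+cA=0$, and cancel the invertible factor $A$.)

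Finally I would compute $\widetilde\eta\widetilde\eta^*$ and invoke this relation. From \eqref{lem211_37} together with $\eta\eta^*=I_n-\gM\gM^*=I_n-B$ we obtain
\[
\widetilde\eta\widetilde\eta^*=\frac1c\,\gM^*\eta\eta^*\gM=\frac1c\,\gM^*(I_n-\gM\gM^*)\gM=\frac1c\bigl(A-A^2\bigr).
\]
Substituting $A^2=(1+c)A-cI_n$ yields $A-A^2=A-(1+c)A+cI_n=c(I_n-A)$, so that $\widetilde\eta\widetilde\eta^*=I_n-A=I_n-\gM^*\gM$, which is precisely \eqref{lem211_38}. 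No genuine obstacle remains beyond the similarity step; everything else is a formal manipulation of the quadratic identity.
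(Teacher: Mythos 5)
Your proof is correct, and it takes a genuinely different route from the paper's. The paper argues spectrally: from \eqref{lem211_36} it reads off that $\eta$ is an eigenvector of $\gM\gM^*$ with eigenvalue $1-\|\eta\|^2$, invokes positive definiteness of $\gM\gM^*$ to get positivity of that eigenvalue, then shows $\gM^*\eta$ is an eigenvector of $I_n-\gM^*\gM$ with eigenvalue $\|\eta\|^2$, computes $\|\widetilde\eta\|=\|\eta\|$, and finally uses $\rank(I_n-\gM^*\gM)=\rank(I_n-\gM\gM^*)=1$ to conclude that the rank-one Hermitian operator $I_n-\gM^*\gM$ must coincide with $\widetilde\eta\widetilde\eta^*$. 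You instead extract the annihilating quadratic $B^2-(1+c)B+cI_n=0$ for $B=\gM\gM^*$, transfer it to $A=\gM^*\gM$ via the similarity $A=\gM^{-1}B\gM$, and finish by the purely algebraic identity $\widetilde\eta\widetilde\eta^*=\tfrac1c(A-A^2)=I_n-A$. Both arguments ultimately exploit the same similarity (the paper uses it implicitly in the rank equality), but yours replaces the eigenvector-plus-rank bookkeeping with a single polynomial identity, which makes the $\eta=0$ case and the norm computation $\|\widetilde\eta\|=\|\eta\|$ unnecessary for establishing \eqref{lem211_38}; the paper's version, on the other hand, yields that norm identity \eqref{lem211_44} as a byproduct, which is then reused in Corollary \ref{s6-c2}. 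No gaps.
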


\begin{proof}
The case $\eta=0_{n\times 1}$ is trivial. Now suppose that $\eta\in\C^n\backslash\{0_{n\times 1}\}$. From (\ref{lem211_36}) we get
\begin{equation}\label{lem211_40}
  (I_n-\gM\gM^*)\eta=\eta\eta^*\eta=\left\|\eta\right\|_{\C^n}^2\cdot\eta
\end{equation}
and consequently
\begin{equation}\label{lem211_41}
  \gM\gM^*\eta=(1-\left\|\eta\right\|_{\C^n}^2)\cdot\eta.
\end{equation}
Hence $1-\left\|\eta\right\|_{\C^n}^2$ is an eigenvalue of $\gM\gM^*$ with corresponding eigenvector $\eta$. Since $\gM$ is nonsingular, the matrix $\gM\gM^*$ is positive Hermitian. Thus, we have $1-\left\|\eta\right\|_{\C^n}^2>0$. Using (\ref{lem211_40}) we infer
\begin{equation}\label{lem211_43}
  (I_n-\gM^*\gM)\gM^*\eta=\gM^*(I_n-\gM\gM^*)\eta=\left\|\eta\right\|_{\C^n}^2\cdot\gM^*\eta.
\end{equation}
Taking into account (\ref{lem211_41}) we can conclude
\begin{equation}\label{lem211_46}
  \left\|\gM^*\eta\right\|_{\C^n}^2
   =\eta^*\gM\gM^*\eta
   =\eta^*\left[(1-\left\|\eta\right\|_{\C^n}^2)\cdot\eta\right]
   =(1-\left\|\eta\right\|_{\C^n}^2)\cdot\left\|\eta\right\|_{\C^n}^2
\end{equation}
and therefore from (\ref{lem211_37}) we have
\begin{equation}\label{lem211_44}
  \left\|\widetilde\eta\right\|_{\C^n} = \left\|\eta\right\|_{\C^n} >0.
\end{equation}
Formulas (\ref{lem211_43}), (\ref{lem211_37}) and (\ref{lem211_44}) show that $\left\|\widetilde\eta\right\|_{\C^n}^2$ is an eigenvalue of $I_n-\gM^*\gM$ with corresponding eigenvector $\widetilde\eta$. From (\ref{lem211_36}) and $\eta\neq0_{n\times 1}$ we get $$\rank(I_n-\gM^*\gM)=\rank(I_n-\gM\gM^*)=1.$$
So for each vector $h$ we can conclude
$$ (I_n-\gM^*\gM)h
  =(I_n-\gM^*\gM)  \mbox{$\big(h,\frac{\widetilde\eta \ }{ \ \left\|\widetilde\eta\right\|_{\C^n}}\big)_{_{\C^n}}
       \frac{\widetilde\eta \ }{ \ \left\|\widetilde\eta\right\|_{\C^n}}$}
  =(h,\widetilde\eta)_{_{\C^n}}\widetilde\eta=\widetilde\eta\widetilde\eta^* \cdot h. $$
\end{proof}


\begin{cor}\label{s6-c2}
Let the assumptions of Lemma \ref{lem211} be satisfied. Then for each $h\in\C^n$ the inequalities 
\begin{align}\label{s6-c2-1}
\|\gM h\|\ge(1-\|\eta\|^2)^{\frac12}\,\|h\|
\end{align}
and
\begin{align}\label{s6-c2-2}
\|\gM^*h\|\ge(1-\|\eta\|^2)^{\frac12}\,\|h\|
\end{align}
are satisfied.
\end{cor}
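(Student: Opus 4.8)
The plan is to derive both estimates from a single elementary observation: a rank-one perturbation of the identity, $I_n - vv^*$, acts as multiplication by $1-\|v\|^2$ on the line spanned by $v$ and as the identity on its orthogonal complement, so it is bounded below by $1-\|v\|^2$. The two ingredients I would feed into this are the defect identities already supplied by Lemma~\ref{lem211}.

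First I would prove \eqref{s6-c2-2}. Rewriting the hypothesis \eqref{lem211_36} as $\gM\gM^* = I_n - \eta\eta^*$ and using the convention $(z,w)_{\C} = z^*w$, under which $\eta\eta^* h = (\eta,h)_{\C^n}\,\eta$, I would compute for arbitrary $h\in\C^n$ that
\[
  \|\gM^* h\|^2 = (\gM\gM^* h, h)_{\C^n} = \|h\|^2 - \bigl|(\eta,h)_{\C^n}\bigr|^2 .
\]
The Cauchy--Schwarz inequality bounds $|(\eta,h)_{\C^n}|^2$ above by $\|\eta\|^2\|h\|^2$, and since Lemma~\ref{lem211} guarantees $1-\|\eta\|^2>0$, taking square roots yields \eqref{s6-c2-2}.

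Next I would obtain \eqref{s6-c2-1} by the symmetric argument, now invoking the dual defect identity \eqref{lem211_38}. Writing $\gM^*\gM = I_n - \widetilde\eta\widetilde\eta^*$, the same computation gives
\[
  \|\gM h\|^2 = (\gM^*\gM h, h)_{\C^n} = \|h\|^2 - \bigl|(\widetilde\eta,h)_{\C^n}\bigr|^2 ,
\]
whence Cauchy--Schwarz produces the lower bound $(1-\|\widetilde\eta\|^2)\|h\|^2$. The decisive input here is that $\|\widetilde\eta\| = \|\eta\|$, which is precisely identity \eqref{lem211_44} recorded in the proof of Lemma~\ref{lem211} (and which holds trivially when $\eta=0$); substituting it converts the bound into $(1-\|\eta\|^2)^{1/2}\|h\|$ and completes \eqref{s6-c2-1}.

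I do not expect a genuine obstacle, since both inequalities reduce to Cauchy--Schwarz together with the two defect identities already in hand. The only point requiring care is the bookkeeping of the inner-product convention, so that $\eta\eta^* h = (\eta,h)_{\C^n}\,\eta$ and hence $(\eta\eta^* h, h)_{\C^n} = |(\eta,h)_{\C^n}|^2$ come out with the correct sign; once this is fixed, each estimate follows in two lines. Should one prefer not to quote \eqref{lem211_44}, the identity $\|\widetilde\eta\|=\|\eta\|$ can be re-derived on the spot from the definition \eqref{lem211_37} and the norm computation \eqref{lem211_46}.
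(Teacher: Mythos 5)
Your proposal is correct and follows essentially the same route as the paper: the defect identities \eqref{lem211_36} and \eqref{lem211_38} turn $\|h\|^2-\|\gM^* h\|^2$ and $\|h\|^2-\|\gM h\|^2$ into $|(\eta,h)|^2$ and $|(\widetilde\eta,h)|^2$ respectively, and Cauchy--Schwarz together with $\|\widetilde\eta\|=\|\eta\|$ from \eqref{lem211_44} finishes both estimates. The only (immaterial) difference is that the paper writes out \eqref{s6-c2-1} and declares \eqref{s6-c2-2} analogous, whereas you do it in the opposite order.
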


\begin{proof}
Applying \eqref{lem211_38} and \eqref{lem211_44} we get for $h\in\C^n$ the relation
\begin{align*}
\|h\|^2-\|\gM h\|^2=\big((I-\gM^*\gM)h,h\big)=\lvert(h,\widetilde\eta)\rvert^2\le\|\widetilde\eta\|^2\,\|h\|^2=\|\eta\|^2\,\|h\|^2.
\end{align*}
This implies \eqref{s6-c2-1}. Analogously, \eqref{s6-c2-2} can be verified.
\end{proof}

\begin{cor}\label{s6-c3}
Let $\gamma\in\Gamma l_2$, and let the matrix $\gM_n(\gamma)$ be defined via \eqref{frakMn}. Then for all $h\in\C^n$
the inequalities
\begin{align}\label{s6-c3-1}
\|\gM_n(\gamma)h\|\ge\Bigg(\prod_{j=1}^n D_{\gamma_j}\Bigg)\|h\|
\end{align}
and
\begin{align}\label{s6-c3-2}
\|\gM_n^*(\gamma)h\|\ge\Bigg(\prod_{j=1}^n D_{\gamma_j}\Bigg)\|h\|
\end{align}
are satisfied.
\end{cor}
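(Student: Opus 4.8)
The plan is to derive Corollary~\ref{s6-c3} as a direct specialization of Corollary~\ref{s6-c2}, applied to the matrix $\gM:=\gM_n(\gamma)$ and the vector $\eta:=\eta_n(\gamma)$ from \eqref{frakMn} and \eqref{eta}. The hypotheses of Corollary~\ref{s6-c2} (namely those of Lemma~\ref{lem211}) are furnished verbatim by Lemma~\ref{lem13}: there it is shown that $\gM_n(\gamma)$ is nonsingular and that
\begin{align*}
I_n-\gM_n(\gamma)\gM_n^*(\gamma)=\eta_n(\gamma)\eta_n^*(\gamma).
\end{align*}
Hence the inequalities \eqref{s6-c2-1} and \eqref{s6-c2-2} apply immediately and give, for every $h\in\C^n$,
\begin{align*}
\|\gM_n(\gamma)h\|\ge\big(1-\|\eta_n(\gamma)\|^2\big)^{\frac12}\|h\|
\qquad\mbox{and}\qquad
\|\gM_n^*(\gamma)h\|\ge\big(1-\|\eta_n(\gamma)\|^2\big)^{\frac12}\|h\|.
\end{align*}

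It then only remains to evaluate the constant, i.e.\ to verify
\begin{align*}
\big(1-\|\eta_n(\gamma)\|^2\big)^{\frac12}=\prod_{j=1}^n D_{\gamma_j},
\end{align*}
which would turn the two displays above into exactly \eqref{s6-c3-1} and \eqref{s6-c3-2}. Reading off the coordinates of $\eta_n(\gamma)$ from \eqref{eta}, the $k$-th entry has squared modulus $|\gamma_k|^2\prod_{j=1}^{k-1}D_{\gamma_j}^2$, so that
\begin{align*}
\|\eta_n(\gamma)\|^2=\sum_{k=1}^n|\gamma_k|^2\prod_{j=1}^{k-1}D_{\gamma_j}^2.
\end{align*}

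The step I expect to be the crux is recognizing this sum as telescoping. Setting $P_k:=\prod_{j=1}^k D_{\gamma_j}^2$ with $P_0:=1$ and using $|\gamma_k|^2=1-D_{\gamma_k}^2$, the $k$-th summand equals $(1-D_{\gamma_k}^2)P_{k-1}=P_{k-1}-P_k$; hence the whole sum collapses to $P_0-P_n=1-\prod_{j=1}^n D_{\gamma_j}^2$. Consequently $1-\|\eta_n(\gamma)\|^2=\prod_{j=1}^n D_{\gamma_j}^2$, and taking square roots yields the claimed factor. There is no substantial obstacle here; the only care needed is tracking the nested partial products $\prod_{j=1}^{k-1}D_{\gamma_j}^2$ in the coordinates of $\eta_n(\gamma)$ so that the telescoping is exact, and observing that each factor $D_{\gamma_j}^2=1-|\gamma_j|^2$ is nonnegative, so the square root is legitimate.
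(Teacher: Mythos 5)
Your proposal is correct and follows exactly the paper's own route: apply Lemma~\ref{lem211} (via Corollary~\ref{s6-c2}) to $\gM_n(\gamma)$ and $\eta_n(\gamma)$, whose hypotheses are supplied by Lemma~\ref{lem13}, and then evaluate $1-\|\eta_n(\gamma)\|^2=\prod_{j=1}^n\big(1-|\gamma_j|^2\big)$ by the same telescoping identity the paper writes out in \eqref{s6-c3-3}. Nothing is missing.
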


\begin{proof}
	The matrix $\rklamFunk{ \fEl{\gM}{n} }{\gamma}$
	satisfies the conditions of Lemma \ref{lem211}. Here the
	vector $\eta$ has the form \eqref{eta}. It remains
	only to mention that in this case we have
	\begin{align}\label{s6-c3-3}
		1 - \norm{\eta}^2
	&=	1 - \abs{ \fEl{\gamma}{1} }^2
		  - \abs{ \fEl{\gamma}{2} }^2
		    \rklam{ 1 - \abs{ \fEl{\gamma}{1} }^2 }
		  - \ldots
		  - \abs{ \fEl{\gamma}{n} }^2
		    \eklam{ 
			\prod_{j = 1}^{n-1}{
				\rklam{1 - \abs{ \fEl{\gamma}{j} }^2}
			} 
		    }	\nonumber\\[5pt]
	&=	\prod_{j = 1}^{n}\rklam{
			1 - \abs{ \fEl{\gamma}{j} }^2
		}	\qPunkt
	\end{align}
\end{proof}

The above consideration lead us to an alternate proof for a nice
sufficient criterion for the Helson-Szeg\H{o} property of a measure
$\mu \in \rklamFunk{ \fEl{\cM^1}{+} }{\dT}$
which is expressed in terms of the modules of the associated
Schur parameter sequence.

\begin{thm}		\label{6-thm9}
	Let $\mu \in \rklamFunk{ \fEl{\cM^1}{+} }{\dT}$
	and let $\gamma	\in	\Gamma$ be the Schur parameter sequence
	associated with $\mu$. If $\gamma \in \Gamma \fEl{l}{2}$ and
	the infinite product
	\begin{align}		\label{6-24}
		\prod_{k = 1}^{\infty}{ 
			\prod_{j = k}^{\infty}{  
				\rklam{ 1 - \abs{ \fEl{\gamma}{j} }^2 }
			} 
		}
	\end{align}
	converges, then $\mu$ is a Helson-Szeg\H{o} measure.
\end{thm}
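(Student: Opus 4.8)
The plan is to verify the sufficient condition \eqref{s6-t2-2} of Theorem \ref{s6-t2} with an explicit positive constant $C$ read off directly from the hypothesis. First I would reformulate the assumption: by \eqref{Pik} and \eqref{DGamma} we have $\Pi_k^2=\prod_{j=k}^\infty(1-|\gamma_j|^2)$, so the convergence of the product \eqref{6-24} means precisely that $\prod_{k=1}^\infty\Pi_k^2$ converges to a positive number; hence $C:=\prod_{k=1}^\infty\Pi_k$ is a well-defined strictly positive constant. Counting how many times each factor $D_{\gamma_i}$ occurs shows the identity $\prod_{k=1}^\infty\Pi_k=\prod_{i=1}^\infty D_{\gamma_i}^{\,i}$ (the factor $D_{\gamma_i}$ appears in $\Pi_1,\dots,\Pi_i$), and this is what links $C$ to the per-factor estimates below.

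The core of the argument is a factor-by-factor lower bound built on the multiplicative decomposition. For fixed $n$ and $m$ I would consider the left-ordered partial product $\stackrel\longleftarrow{\prod_{k=0}^m}\gM_n^*(W^k\gamma)=\gM_n^*(W^m\gamma)\cdots\gM_n^*(W\gamma)\gM_n^*(\gamma)$ occurring in \eqref{s6-t2-2}, and apply Corollary \ref{s6-c3} to each factor. Since $W^k\gamma\in\Gamma l_2$ and $(W^k\gamma)_j=\gamma_{k+j}$ by \eqref{coshift}, Corollary \ref{s6-c3} applied to the shifted matrix \eqref{frakMn} gives $\|\gM_n^*(W^k\gamma)v\|\ge\big(\prod_{j=1}^n D_{\gamma_{k+j}}\big)\|v\|$. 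Applying these estimates successively, starting from $v_0:=h_n$ and acting with the rightmost factor first, multiplies the constants and yields
\[
\left\|\Big(\stackrel\longleftarrow{\prod_{k=0}^m}\gM_n^*(W^k\gamma)\Big)h_n\right\|\ge\Bigg(\prod_{k=0}^m\prod_{j=1}^n D_{\gamma_{k+j}}\Bigg)\|h_n\|.
\]
Because each $D_{\gamma_{k+j}}\le1$, the products on the right are non-increasing in $m$ and therefore at least their limit $\prod_{k=0}^\infty\prod_{j=1}^n D_{\gamma_{k+j}}$.

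The final step is a reindexing that I expect to be the main (though purely combinatorial) obstacle: the factor $D_{\gamma_i}$ occurs in $\prod_{k=0}^\infty\prod_{j=1}^n D_{\gamma_{k+j}}$ exactly $\min(n,i)$ times, so this product equals $\prod_{i=1}^\infty D_{\gamma_i}^{\,\min(n,i)}$; since $\min(n,i)\le i$ and $D_{\gamma_i}\le1$, it is bounded below by $\prod_{i=1}^\infty D_{\gamma_i}^{\,i}=C$, uniformly in both $m$ and $n$. Thus the left-hand side of \eqref{s6-t2-2} is at least $C\|h_n\|$ for all $m,n$; letting first $m\to\infty$ and then $n\to\infty$ and using $\|h_n\|\to\|h\|$ produces \eqref{s6-t2-2}, whence Theorem \ref{s6-t2} yields that $\mu$ is a Helson-Szeg\H o measure. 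The only points needing care are the non-commutativity of the product, which forces the factors to be estimated in the correct order, and the strict positivity $C>0$, which is guaranteed exactly by the convergence hypothesis on \eqref{6-24}; moreover Corollary \ref{cor14} together with Lemma \ref{lem12neu} ensures that the partial products converge to $\cL_n^*(\gamma)$, so the inner limit in \eqref{s6-t2-2} is the intended one.
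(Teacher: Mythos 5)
Your proposal is correct and follows essentially the same route as the paper: it applies the lower bound \eqref{s6-c3-2} of Corollary \ref{s6-c3} successively to each factor of the left-ordered product $\overleftarrow{\prod_{k=0}^{m}}\gM_n^*(W^k\gamma)$, arrives at the same constant $C=\prod_{k=1}^\infty\prod_{j=k}^\infty D_{\gamma_j}$ (which you equivalently write as $\prod_{i=1}^\infty D_{\gamma_i}^{\,i}$), and concludes via Theorem \ref{s6-t2}. The only cosmetic difference is how the double product is bounded below by $C$ (your $\min(n,i)$-multiplicity count versus the paper's enlargement of each inner product to an infinite one), which is immaterial.
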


\begin{proof}
	Applying successively the estimate \eqref{s6-c3-2}, we
	get for all $m, \, n \in \N$ and all vectors $h \in \dC^n$
	the chain of inequalities
	\begin{align}
	&	\hspace{-18pt}
		\norm{  
			\eklam{
				\overleftarrow{
					\prod_{k = 0}^{m}
				}
				{
					\rklamFunk{ \fEl{\gM^*}{n} }{ W^k \gamma}
				}
			} h
		}	\nonumber	\\
	&=	\norm{  
			\rklamFunk{ \fEl{\gM^*}{n} }{ W^m \gamma}
			\eklam{
				\overleftarrow{
					\prod_{k = 0}^{m-1}
				}
				{
					\rklamFunk{ \fEl{\gM^*}{n} }{ W^k \gamma}
				}}
			 h
		}	\nonumber	\displaybreak[0]	\\[5pt]
	&\geq	\prod_{j = m+1}^{m+n}{ \fEl{D}{ \fEl{\gamma}{j} } }
		\norm{  
			\eklam{
				\overleftarrow{\prod_{k = 0}^{m-1}}
				{
					\rklamFunk{ \fEl{\gM^*}{n} }{ W^k \gamma}
				}
			} h
		}	
		\nonumber	\\[5pt]
	&\geq		\ldots\nonumber	\\[5pt]
	&\geq	\rklam{
			\prod_{j = m+1}^{m+n}{ \fEl{D}{ \fEl{\gamma}{j} } }
		}
		\cdot
		\rklam{
			\prod_{j = m}^{m+n-1}{ \fEl{D}{ \fEl{\gamma}{j} } }
		}
		\cdot
		\ldots
		\cdot
		\rklam{
			\prod_{j = 1}^{n}{ \fEl{D}{ \fEl{\gamma}{j} } }
		}
		\norm{h}	\nonumber	\displaybreak[0]	\\[5pt]
		&\geq	\rklam{
			\prod_{j = m+1}^{\infty}{ \fEl{D}{ \fEl{\gamma}{j} } }
		}
		\cdot
		\rklam{
			\prod_{j = m}^{\infty}{ \fEl{D}{ \fEl{\gamma}{j} } }
		}
		\cdot
		\ldots
		\cdot
		\rklam{
			\prod_{j = 1}^{\infty}{ \fEl{D}{ \fEl{\gamma}{j} } }
		}
		\norm{h}	\nonumber	\displaybreak[0]	\\[5pt]
	&=	\rklam{
			\prod_{k = 1}^{m+1}{
				\prod_{j = k}^{\infty}{ \fEl{D}{ \fEl{\gamma}{j} } }
			}
		}
		\norm{h}	\nonumber	\\[5pt]
	&\geq	\rklam{
			\prod_{k = 1}^{\infty}{
				\prod_{j = k}^{\infty}{ \fEl{D}{ \fEl{\gamma}{j} } }
			}
		}
		\norm{h}
	\end{align}
From this inequality it follows \eqref{s6-t2-2} where
$$ C=\prod_{k=1}^\infty\prod_{j=k}^\infty D_{\gamma_j} \;.$$
Thus, the proof is complete.
\end{proof}

Taking into account that the convergence of the infinite product \eqref{6-24}
is equivalent to the strong Szeg\H o condition
\begin{equation*}
  \sum_{k = 1}^{\infty}{ k \cdot \abs{ \fEl{\gamma}{k} }^2 } < \infty ,
\end{equation*}
Theorem \ref{6-thm9} is an immediate consequence of \cite[Theorem 5.3]{GKPY}.
The proof of \cite[Theorem 5.3]{GKPY} is completely different from the above
proof of Theorem \ref{6-thm9}. It is based on a scattering formalism using
CMV matrices (For a comprehensive exposition on CMV matrices, we refer the
reader to Chapter 4 in the monograph Simon \cite{Sim}.)

The aim of our next considerations is to characterize the Helson-Szeg\H o
property of a measure $\mu \in \rklamFunk{ \cM^{1}_{+} }{\dT}$ in terms
of some infinite series formed from its Schur parameter sequence. The
following result provides the key information for the desired characterization.

\begin{thm}		\label{6-thm13}
	Let $\gamma = \fKlam{\gamma}{j}{0}{\infty} \in \Gamma \fEl{\ell}{2}$ and let
	\begin{align}		\label{6-100}
		\rklamFunk{ \cA }{\gamma}
		:=  I - \rklamFunk{ \cL }{\gamma} \rklamFunk{ \Adj{\cL} }{\gamma}
	\end{align}
	where $\rklamFunk{ \cL }{\gamma}$ is given by \eqref{5-thm7-1}.
	Then $\rklamFunk{ \cA }{\gamma}$ satisfies the inequalities
	\begin{align}		\label{6-101}
		0 \leq 
		\rklamFunk{ \cA }{\gamma}
		\leq I
	\end{align}
	and admits the strong convergent series decomposition
	\begin{align}		\label{6-102}
		\rklamFunk{ \cA }{\gamma}
		= \sum_{j = 0}^{\infty}{ 
		    \rklamFunk{ \fEl{ \xi }{j} }{ \gamma } 
		    \rklamFunk{ \fEl{ \Adj{\xi} }{j} }{ \gamma }
		  }
	\end{align}
	where
	\begin{align}		\label{6-103}
		 \rklamFunk{ \fEl{ \xi }{0} }{ \gamma } 
		 :=  \rklamFunk{ \eta }{ \gamma } ,
	 \qquad
	 \qquad
	     \rklamFunk{ \fEl{ \xi }{j} }{ \gamma } 
	    := \eklam{
				\overrightarrow{
					\prod_{k = 0}^{j - 1}
				}
				{
					\rklamFunk{\gM}{ W^k \gamma}
				}
			 }
			 \rklamFunk{ \eta }{ W^j \gamma } ,
			 j \in \N,
	\end{align}
	and $\rklamFunk{\gM}{\gamma}$, $\rklamFunk{\eta}{\gamma}$
	and $W$ are given by \eqref{s6-2A-1B}, \eqref{s6-2A-1D} and \eqref{coshift},
 respectively.
\end{thm}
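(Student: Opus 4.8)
The plan is to obtain \eqref{6-102} by telescoping the multiplicative factorization of $\cL(\gamma)$ from Theorem \ref{s6-2A}, and to deduce \eqref{6-101} from the contractivity of $\cL(\gamma)$.

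First I would record that $\cL(\gamma)$ is a contraction on $l_2$. By Lemma \ref{lem11a} every finite section $\fL_n(\gamma)$ is contractive, and since the matrix \eqref{5-thm7-1} is lower triangular, $\fL_n(\gamma)$ is precisely the upper left $n\times n$ block of $\cL(\gamma)$. Applying $\cL(\gamma)$ to a vector $h$ supported in the first $n$ coordinates and comparing its components with those of $\fL_m(\gamma)h$ for $m\ge n$ gives $\|\cL(\gamma)h\|\le\|h\|$, first on finitely supported $h$ and then, by density, on all of $l_2$. Hence $0\le\cL(\gamma)\Adj{\cL}(\gamma)\le I$, which is exactly \eqref{6-101}.

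For \eqref{6-102} I would first establish the one-step recursion
\[
 \cA(\gamma)=\eta(\gamma)\Adj{\eta}(\gamma)+\gM(\gamma)\,\cA(W\gamma)\,\Adj{\gM}(\gamma),
\]
obtained by inserting the factorization \eqref{s6-2A-1A} into \eqref{6-100}, splitting
$I-\gM(\gamma)\cL(W\gamma)\Adj{\cL}(W\gamma)\Adj{\gM}(\gamma)=\big[I-\gM(\gamma)\Adj{\gM}(\gamma)\big]+\gM(\gamma)\big[I-\cL(W\gamma)\Adj{\cL}(W\gamma)\big]\Adj{\gM}(\gamma)$,
and invoking \eqref{s6-2A-1C}. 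Iterating this identity $N$ times and writing $P_N:=\overrightarrow{\prod_{k=0}^{N-1}}\gM(W^k\gamma)$ produces
\[
 \cA(\gamma)=\sum_{j=0}^{N-1}\xi_j(\gamma)\Adj{\xi_j}(\gamma)+P_N\,\cA(W^N\gamma)\,\Adj{P_N},
\]
with $\xi_j(\gamma)$ exactly as in \eqref{6-103}. Each summand is a nonnegative rank-one operator, and the remainder $R_N:=P_N\cA(W^N\gamma)\Adj{P_N}$ is nonnegative because $P_N$ is a product of contractions and $0\le\cA(W^N\gamma)\le I$. Thus the partial sums increase and are dominated by $\cA(\gamma)$, so they converge strongly to some $S$ with $0\le S\le\cA(\gamma)$.

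The crux is to show $R_N\to0$ strongly, i.e.\ $S=\cA(\gamma)$. Here the naive route fails: strong convergence of $P_N$ to $\cL(\gamma)$ does not yield $P_N\Adj{P_N}\to\cL(\gamma)\Adj{\cL}(\gamma)$, since passing to adjoints preserves only weak convergence. I would instead argue entrywise through the finite sections. Because all matrices involved are lower triangular, truncation to the upper left $n\times n$ corner commutes with the products, so the corner of the $N$-th partial sum is $\sum_{j=0}^{N-1}\xi_j^{(n)}\Adj{(\xi_j^{(n)})}$ with $\xi_j^{(n)}:=\gM_n(\gamma)\cdots\gM_n(W^{j-1}\gamma)\,\eta_n(W^j\gamma)$, and the purely finite-dimensional telescoping built from \eqref{LnProd} and \eqref{MnForm} gives the exact identity
\[
 I_n-\fL_n(\gamma)\Adj{\fL_n}(\gamma)
 =\sum_{j=0}^{N-1}\xi_j^{(n)}\Adj{(\xi_j^{(n)})}
 +P_N^{(n)}\big(I_n-\fL_n(W^N\gamma)\Adj{\fL_n}(W^N\gamma)\big)\Adj{(P_N^{(n)})},
\]
where $P_N^{(n)}:=\overrightarrow{\prod_{k=0}^{N-1}}\gM_n(W^k\gamma)$. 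By Corollary \ref{cor14} we have $P_N^{(n)}\to\fL_n(\gamma)$, while Lemma \ref{lem12neu}(c) forces $I_n-\fL_n(W^N\gamma)\Adj{\fL_n}(W^N\gamma)\to0$; hence the finite remainder vanishes and the corner of $S$ equals $I_n-\fL_n(\gamma)\Adj{\fL_n}(\gamma)$, which is the corner of $\cA(\gamma)$. Since this holds for every $n$ and strong convergence transmits to matrix entries, I conclude $S=\cA(\gamma)$, establishing \eqref{6-102}. The sole genuine obstacle is this interchange of limits, and what makes it go through is the combination of the lower-triangular structure with the monotonicity of the partial sums of positive operators.
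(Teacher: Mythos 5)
Your proposal is correct and follows essentially the same route as the paper: contractivity of $\cL(\gamma)$ gives \eqref{6-101}, and \eqref{6-102} comes from telescoping the factorization \eqref{s6-2A-1A} via \eqref{s6-2A-1C} into partial sums plus a remainder $P_N\,\cA(W^N\gamma)\,\Adj{P_N}$. The only difference is in how the remainder is killed -- the paper argues directly that it tends to zero on finitely supported vectors (again via Lemma \ref{lem12neu}(c)) and then upgrades weak to strong convergence, whereas you pass to $n\times n$ corners and use monotonicity of the positive partial sums; your version makes explicit a limit-interchange point that the paper's proof treats rather tersely.
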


\begin{proof}
  Since the matrix $\rklamFunk{ \cL }{\gamma}$ is a block of the unitary
  operator matrix given by \eqref{5-20} we have
  $\norm{ \rklamFunk{ \cL }{\gamma} } \leq 1$.
  This implies the inequalities \eqref{6-101}. Using \eqref{s6-2A-1B}
  and \eqref{s6-2A-1C}, we obtain
	\begin{align*}
		\rklamFunk{ \cA }{\gamma}
	&	= I - \rklamFunk{ \cL }{\gamma} 
	         \rklamFunk{ \Adj{\cL} }{\gamma} \displaybreak[0]  \\[5pt]
	&	= I - \rklamFunk{\gM}{\gamma}
	         \rklamFunk{ \cL }{W \gamma} 
	         \rklamFunk{ \Adj{\cL} }{W \gamma}
	         \rklamFunk{ \Adj{\gM} }{\gamma} \displaybreak[0]  \\[5pt]
	&	= I - \rklamFunk{\gM}{\gamma}  \rklamFunk{ \Adj{\gM} }{\gamma}
	       + \rklamFunk{\gM}{\gamma}
	         \rklamFunk{ \cA }{W \gamma} 
	         \rklamFunk{ \Adj{\gM} }{\gamma}\displaybreak[0]  \\[5pt]
	&	= \rklamFunk{\eta}{\gamma}
	     \rklamFunk{ \Adj{\eta} }{\gamma}
	       + \rklamFunk{\gM}{\gamma}
	         \rklamFunk{ \cA }{W \gamma} 
	         \rklamFunk{ \Adj{\gM} }{\gamma} .
	\end{align*}
   Repeating this procedure $m-1$ times, we get
	\begin{align*}
		\rklamFunk{ \cA }{\gamma}
	&	= \rklamFunk{\eta}{\gamma}
	     \rklamFunk{ \Adj{\eta} }{\gamma}
	     + \rklamFunk{\gM}{\gamma}
	         \rklamFunk{ \eta }{W \gamma} 
	         \rklamFunk{ \Adj{\eta} }{W \gamma}
	         \rklamFunk{ \Adj{\gM} }{\gamma}  \\[2pt]
	&   \qquad   + \ldots + 
	     \eklam{
				\overrightarrow{
					\prod_{k = 0}^{m - 1}
				}
				{
					\rklamFunk{\gM}{ W^k \gamma}
				}
			 }
			 \rklamFunk{\eta}{W^m \gamma}
	       \rklamFunk{ \Adj{\eta} }{W^m \gamma}
			\eklam{
				\overleftarrow{
					\prod_{k = 0}^{m - 1}
				}
				{
					\rklamFunk{ \Adj{\gM} }{ W^k \gamma}
				}
			 }  \\[2pt]
   &   \qquad    \qquad +
	     \eklam{
				\overrightarrow{
					\prod_{k = 0}^{m - 1}
				}
				{
					\rklamFunk{\gM}{ W^k \gamma}
				}
			 }
			 \rklamFunk{\cA}{W^{m + 1} \gamma}
			\eklam{
				\overleftarrow{
					\prod_{k = 0}^{m - 1}
				}
				{
					\rklamFunk{ \Adj{\gM} }{ W^k \gamma}
				}
			 }\displaybreak[0]  \\[5pt]
	&  = \sum_{j = 0}^{m - 1}{ 
		    \rklamFunk{ \fEl{ \xi }{j} }{ \gamma } 
		    \rklamFunk{ \fEl{ \Adj{\xi} }{j} }{ \gamma }
		  }
		  + \eklam{
				\overrightarrow{
					\prod_{k = 0}^{m - 1}
				}
				{
					\rklamFunk{\gM}{ W^k \gamma}
				}
			 }
			 \rklamFunk{\cA}{W^{m + 1} \gamma}
			\eklam{
				\overleftarrow{
					\prod_{k = 0}^{m - 1}
				}
				{
					\rklamFunk{ \Adj{\gM} }{ W^k \gamma}
				}
			 } \ldotp
	\end{align*}
	In view of part (c) of Lemma \ref{lem11a} and the shape \eqref{s6-2A-1B}
	of the matrix $\rklamFunk{\gM}{\gamma}$ for finite vectors $h \in \fEl{\ell}{2}$
	(i.e. $h$ has the form
	$h = \rklamFunk{{\rm col}}{ \fEl{z}{1}, \, \fEl{z}{2}, 
	       \, \ldots \, \fEl{z}{n}, \, 0, \, 0, \, \ldots}$ for some $n \in \N$)
	we obtain
	\begin{equation*}
	   \lim_{m \longrightarrow \infty}{
	   \eklam{
				\overrightarrow{
					\prod_{k = 0}^{m - 1}
				}
				{
					\rklamFunk{\gM}{ W^k \gamma}
				}
			 }
			 \rklamFunk{\cA}{W^{m + 1} \gamma}
			\eklam{
				\overleftarrow{
					\prod_{k = 0}^{m - 1}
				}
				{
					\rklamFunk{ \Adj{\gM} }{ W^k \gamma}
				}
			 }
	   } h
	   = 0 .
	\end{equation*}
	This implies that the series given by the right-hand side of the formula
	\eqref{6-103} weakly converges to $\rklamFunk{\cA}{\gamma}$. From the
	concrete form of this series, its strong convergence follows.
	Thus, the proof is complete.
\end{proof}

The last main result of this paper is the following statement, which is an
immediate consequence of Theorem \ref{s6-t1} and Theorem \ref{6-thm13}.

\begin{thm}		\label{6-thm14}
	Let $\mu \in \rklamFunk{ \cM^{1}_{+} }{\dT}$ and let $\gamma \in \Gamma$
	be the sequence of Schur parameters associated with $\mu$. Then $\mu$ is a
	Helson-Szeg\H o measure if and only if $\gamma \in \Gamma \fEl{\ell}{2}$
	and there exists some positive constant $\varepsilon \in \rklam{0, \, 1}$ such
	that the inequality
	\begin{align}		\label{6-104}
		\sum_{j = 0}^{\infty}{ 
		    \rklamFunk{ \fEl{ \xi }{j} }{ \gamma } 
		    \rklamFunk{ \fEl{ \Adj{\xi} }{j} }{ \gamma }
		  }
		  \leq \rklam{ 1 - \varepsilon} I
	\end{align}
	is satisfied, where the vectors
	$\rklamFunk{ \fEl{ \xi }{j} }{ \gamma }, \, j \in \N_0$, are given by
	\eqref{6-103}.
\end{thm}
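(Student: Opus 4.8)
The plan is to obtain Theorem \ref{6-thm14} by feeding the explicit series representation of Theorem \ref{6-thm13} into the abstract invertibility criterion of Theorem \ref{s6-t1}. By Theorem \ref{s6-t1}, the measure $\mu$ is a Helson-Szeg\H o measure exactly when $\gamma \in \Gamma \fEl{\ell}{2}$ and the operator $\Adj{\cL}(\gamma)$ is boundedly invertible in $\fEl{\ell}{2}$. Assuming throughout that $\gamma \in \Gamma \fEl{\ell}{2}$, it therefore remains only to translate the bounded invertibility of $\Adj{\cL}(\gamma)$ into the quadratic-form inequality \eqref{6-104}. Here I would use that Theorem \ref{6-thm13} identifies the series $\sum_{j = 0}^{\infty}{ \fEl{\xi}{j}(\gamma) \fEl{\Adj{\xi}}{j}(\gamma) }$ with $\cA(\gamma) = I - \cL(\gamma) \Adj{\cL}(\gamma)$, so that condition \eqref{6-104} is nothing but the operator inequality $\cA(\gamma) \leq (1 - \varepsilon) I$.

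The central computation is the elementary identity
\begin{align*}
  \norm{ \Adj{\cL}(\gamma) h }^2
  &= \rklamPaar{ \cL(\gamma) \Adj{\cL}(\gamma) h }{ h }
   = \norm{h}^2 - \rklamPaar{ \cA(\gamma) h }{ h } \qKomma
   \qquad h \in \fEl{\ell}{2} \qPunkt
\end{align*}
From this I read off that $\Adj{\cL}(\gamma)$ being bounded below by $\sqrt{\varepsilon}$ is precisely equivalent to $\rklamPaar{ \cA(\gamma) h }{ h } \leq \rklam{1 - \varepsilon} \norm{h}^2$ for all $h$, i.e.\ to $\cA(\gamma) \leq \rklam{1 - \varepsilon} I$, which by \eqref{6-102} is condition \eqref{6-104}. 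For the first direction I would assume \eqref{6-104}, deduce $\norm{\Adj{\cL}(\gamma) h}^2 \geq \varepsilon \norm{h}^2$, and conclude that $\Adj{\cL}(\gamma)$ is bounded below; for the converse I would extract a lower bound $C > 0$ from bounded invertibility and set $\varepsilon := C^2$.

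The step I expect to need the most care is the passage from \emph{bounded below} to \emph{boundedly invertible}, since being bounded below yields only injectivity together with a closed range. To close this gap I would invoke the triangular shape \eqref{5-thm7-1} of $\cL(\gamma)$, whose diagonal entries $\fEl{\Pi}{n}$ are strictly positive for $\gamma \in \Gamma \fEl{\ell}{2}$: solving the triangular system row by row forces $\ker \cL(\gamma) = \gklam{0}$, whence $\Ran \Adj{\cL}(\gamma)$ is dense in $\fEl{\ell}{2}$. A bounded-below operator with dense range has closed range equal to the whole space, so $\Adj{\cL}(\gamma)$ is a bounded bijection with bounded inverse (of norm at most $\varepsilon^{-1/2}$), hence boundedly invertible. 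The only remaining bookkeeping concerns the endpoints of the interval: since $\norm{\cL(\gamma)} \leq 1$ forces any admissible lower bound to satisfy $C \leq 1$, and shrinking $C$ slightly is harmless, I can always arrange $\varepsilon = C^2 \in (0,1)$ strictly, matching the statement.
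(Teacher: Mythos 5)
Your proposal is correct and follows essentially the same route the paper intends: Theorem \ref{6-thm14} is obtained by combining the invertibility criterion of Theorem \ref{s6-t1} with the series identity $\cA(\gamma)=I-\cL(\gamma)\Adj{\cL}(\gamma)=\sum_{j\ge0}\xi_j(\gamma)\Adj{\xi_j}(\gamma)$ from Theorem \ref{6-thm13}, via the observation that \eqref{6-104} is exactly the statement that $\Adj{\cL}(\gamma)$ is bounded below by $\sqrt{\varepsilon}$. Your careful passage from ``bounded below'' to ``boundedly invertible'' (triviality of $\ker\cL(\gamma)$ from the triangular form with positive diagonal, hence density of $\Ran\Adj{\cL}(\gamma)$) is precisely the content and proof of the paper's Corollary \ref{s6-c1}, so nothing is missing.
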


We note that the inequality \eqref{6-104} can be considered as a rewriting
of condition \eqref{s6-t2-2} in an additive form.

\begin{rem}   \label{6-rem15}
  Finally, we would like to add that many important properties
  of Schur functions can be characterized in terms
  of the matrix, $\rklamFunk{ \cL }{\gamma}$, given by \eqref{5-thm7-1}.
  It was shown in \cite[Section 5]{7} that the pseudocontinuability of
  a Schur function is determined by the properties of the matrix
  $\rklamFunk{ \cL }{\gamma}$. In \cite[Section 2]{9}, it was proved
  that the $S$-recurrence property of Schur parameter sequences of
  non-inner rational Schur functions is also expressed with the aid of
  the matrix $\rklamFunk{ \cL }{\gamma}$. Furthermore, the structure of
  the matrix $\rklamFunk{ \cL }{\gamma}$ allows one to determine whether
  a non-inner Schur function is rational or not(see (\cite[Section 2]{9}).
\end{rem}


\end{document}